\setlist[enumerate]{label = (\alph*), ref=(\text{\alph*)}}
\setlist[itemize]{nolistsep}
\def\cN{{\rm N}}
\def\GG{{\mathbb G}}%
\def\FF{{\mathbb F}}%
\def\CC{{\mathbb C}}%
\def\KK{{\mathbb K}}%
\def\ZZ{{\mathbb Z}}%
\def\RR{{\mathbb R}}%
\def\NN{{\mathbb N}}%
\def\QQ{{\mathbb Q}}%
\def\PP{{\mathbb P}}%
\def\AA{{\mathbb A}}%
\def\RRR{\mathcal{R}}
\def\Of{{\mathcal{O}}}
\def\OH{{\mathcal{H}}}
\def\Mat{{\rm Mat}}%
\def\LND{{\rm LND}}%
\def\Cl{{\rm Cl}}%
\def\Aut{{\rm Aut}}%
\def\rk{{\rm rk}}%
\def\reg{{\rm reg}}%
\def\SAut{{\rm SAut}}%
\def\Aff{{\rm Aff}}%
\def\Tame{{\rm Tame}}%
\def\GL{{\rm GL}}%
\def\PGL{{\rm PGL}}%
\def\SL{{\rm SL}}%
\def\SO{{\rm SO}}%
\def\Sp{{\rm Sp}}%
\def\sl{\mathfrak{sl}}%
\def\Div{{\rm Div}}%
\def\WDiv{{\rm WDiv}}
\def\exp{{\rm exp}}%
\def\deg{{\rm deg}}%
\def\codim{{\rm codim}}%
\def\trdeg{{\rm tr.deg}}%
\def\Susp{{\rm Susp}}%
\def\Spec{{\rm Spec\,}}%
\def\Id{{\rm Id}}%
\newtheorem{theorem}{Theorem}
\newtheorem{proposition}{Proposition}
\newtheorem{lemma}{Lemma}
\newtheorem{corollary}{Corollary}
\newtheorem{conjecture}{Conjecture}
\theoremstyle{definition}
\newtheorem{definition}{Definition}
\theoremstyle{remark}
\newtheorem{remark}{Remark}
\newtheorem{example}{Example}
\newcounter{property}
\newcounter{prooperty}
\begin{document}
\title[Automorphisms and infinite transitivity]{Automorphisms  of algebraic varieties and \\ infinite transitivity}
\author{Ivan Arzhantsev}
\address{Faculty of Computer Science, HSE University, Pokrovsky Boulevard 11, Moscow, 109028 Russia}
\email{arjantsev@hse.ru}
\thanks{Supported by Russian Foundation for Basic Research, grant 20-11-50106}
\subjclass[2010]{Primary 14L30, 14R10; \ Secondary 13E10, 14M25, 20M32}
\keywords{Algebraic variety, automorphism, group action, multiple transitivity, locally nilpotent derivation, toric variety, affine cone} 

\begin{abstract}
We survey recent results on multiple transitivity of automorphism groups of affine algebraic varieties. We consider the property of infinite transitivity of the special automorphism group,
which is equivalent to flexibility of the corresponding affine variety. These properties have important algebraic and geometric consequences. At the same time they are fulfilled for wide classes of varieties. Also we study situations where infinite transitivity takes place for automorphism groups generated by finitely many one-parameter subgroups. In the appendices to the paper, the results on infinitely transitive actions in complex analysis and in combinatorial group theory are discussed.
\end{abstract}
\maketitle
\tableofcontents
\section*{Introduction}
\label{int}

In this survey we study multiply transitive actions of automorphism groups of algebraic varieties and the closely related concept
of a flexible affine algebraic variety. Let us start by reminding basic definitions.

Let us fix a positive integer $m$. We say that an action of a group $G$ on a set $X$ is \emph{$m$-transitive} if for any two tuples of $m$ pairwise distinct points of the set $X$ there exists an element of the group $G$ that sends the first tuple to the second one. The action of $G$ on $X$ is called \emph{infinitely transitive} if it is $m$-transitive for any positive integer $m$. Infinite transitivity is equivalent to the fact that the pointwise stabilizer of any finite set of points acts transitively on the complement to this set.

In the case of finite groups, it is easy to see that the symmetric group $S_n$ acts on a set with $n$ elements $n$-transitively. The same action restricted to the subgroup of even permutations $A_n$ becomes $(n-2)$-transitive. The surprising fact is that all other finite permutation groups have the transitivity degree not greater than~$5$~\cite[p.~2.1]{DM}. Moreover, only the Mathieu groups $M_{12}$ and $M_{24}$ have the transitivity degree~$5$, and the transitivity degree equals $4$ only for the Mathieu groups $M_{11}$ and~$M_{23}$~\cite[Chapter~6]{DM}. At the same time, there are infinitely many finite groups of the transitivity degree~$3$. For example, we may consider the action of the group $\PGL_2(\FF_q)$ on the projective line $\PP^1(\FF_q)$ over a field with $q$ elements. With different values of $q$ we obtain different $3$-transitive actions.

Let us move to the case of infinite groups. First of all, we are interested in automorphism groups of algebraic varieties. It is well known that the connected component of the automorphism group of a complete (in particular, projective) algebraic variety is a (finite-dimensional) algebraic group~\cite{Mat}. In particular, such a group can not act on a variety infinitely transitively.

More precisely, it is known that the maximum of transitivity degree of the action of an algebraic group is $3$ and this value is achieved only for the action of the group $\PGL_2$ on the projective line $\PP^1$~\cite[Corollary~2]{Kn}. All $2$-transitive actions of algebraic groups over an arbitrary algebraically closed field are classified in~\cite{Kn}. It turned out that in addition to the action of the group $\PGL_{n+1}$ on $\PP^n$, such actions are only the actions of the semidirect product $G\rightthreetimes V$ on~$V$, where $G$ is a linear algebraic group and $V$ is a rational $G$-module such that $G$ acts on $V\setminus\{0\}$ transitively. All such actions can be explicitly listed. Multiply transitive actions of Lie groups were studied in~\cite{Bo, Kr}.

In the case of affine varieties, the situation changes essentially. It is not difficult to show that for $n\ge 2$ the automorphism group of the affine space $\AA^n$ over an infinite field acts on $\AA^n$ infinitely transitively. Constructing other examples of infinitely transitive actions on algebraic varieties is a nontrivial task that requires to develop a special technique. One of the first works in this direction is the paper~\cite{KZ}. Here the infinite transitivity is proved for the action of the automorphism group on the smooth locus of certain affine hypersurfaces. Such hypersurfaces are called suspensions. In~\cite{AKZ-1}, the concept of a flexible variety is defined, and the properties of flexibility and infinite transitivity were
proved for affine cones over varieties of generalized flags, non-degenerate affine toric varieties and suspensions over flexible varieties.

One interpretation of the infinite transitivity of the action of the automorphism group is that any two embeddings of a given finite set into the smooth locus of a flexible variety are equivalent, that is, they can be translated to each other by a suitable automorphism of the variety. This statement is consonant with the Abyankar-Moh-Suzuki Theorem on the equivalence of embeddings of a line into a plane.

Unless otherwise stated, we assume the ground field ~$\KK$ to be an algebraically closed field of characteristic zero. Consider the additive group $\GG_a$ of the field $\KK$ with the natural structure of a one-dimensional linear algebraic group. For each non-identical regular action $\GG_a\times X\to X$, the image of $\GG_a$ in the automorphism group $\Aut(X)$ is called a \emph{$\GG_a$-subgroup}. Denote by $\SAut(X)$ the subgroup of the group $\Aut(X)$ generated by all $\GG_a$-subgroups. Since a subgroup conjugated to a $\GG_a$-subgroup by an arbitrary automorphism is again a $\GG_a$-subgroup, the subgroup $\SAut(X)$ is normal in~$\Aut(X)$.

We say that a smooth point $x$ of a variety $X$ is \emph{flexible} if the tangent space $T_x(X)$ is generated by tangent vectors to the orbits of $\GG_a$-actions on $X$ passing through~$x$. An algebraic variety $X$ is called \emph{flexible} if every its smooth point is flexible. For example, this condition is satisfied if there is at least one flexible point on $X$ and the group $\Aut(X)$ acts transitively on the smooth locus.

We are ready to formulate the main result of the paper~\cite{AFKKZ-1}. This is one of the central results for this survey.

\begin{theorem} \label{tmain}
Let $X$ be an irreducible affine algebraic variety of dimension $\ge 2$. Then the following conditions are equivalent.
\begin{enumerate}
\item[(i)]
The group $\SAut(X)$ acts on the smooth locus $X^{\reg}$ of the variety $X$ transitively.
\item[(ii)]
The group $\SAut(X)$ acts on the smooth locus $X^{\reg}$ infinitely transitively.
\item[(iii)]
The variety $X$ is flexible.
\end{enumerate}
\end{theorem}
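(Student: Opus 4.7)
The plan is to close the cycle by establishing (ii) $\Rightarrow$ (i) trivially, then (iii) $\Rightarrow$ (i), (i) $\Rightarrow$ (iii), and finally the main implication (iii) $\Rightarrow$ (ii). For (iii) $\Rightarrow$ (i), I would fix $x\in X^{\reg}$ and, using flexibility, pick locally nilpotent derivations $\delta_1,\dots,\delta_k$ on $\mathcal{O}(X)$ whose values $\delta_i(x)$ span $T_xX$. Writing $H_i=\exp(\GG_a\delta_i)$, the morphism
\[
\mu_x\colon H_1\times\cdots\times H_k\longrightarrow X,\qquad (h_1,\dots,h_k)\longmapsto h_1\cdots h_k\cdot x,
\]
has surjective differential at the identity, so its image contains a Zariski-open neighborhood of $x$. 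This neighborhood lies in $\SAut(X)\cdot x$, so every $\SAut(X)$-orbit meeting $X^{\reg}$ is open; irreducibility of $X^{\reg}$ then forces a unique such orbit.

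For (i) $\Rightarrow$ (iii), I would exploit normality of $\SAut(X)$ in $\Aut(X)$: conjugation by any $g\in\Aut(X)$ permutes $\GG_a$-subgroups, so the differential $g_*$ sends the subspace $T_x^0\subseteq T_xX$ spanned by tangent vectors to $\GG_a$-orbits through $x$ isomorphically onto $T_{gx}^0$. Hence $\dim T_y^0$ is constant, say equal to $d$, on the $\SAut(X)$-orbit of $x$, and by (i) on all of $X^{\reg}$. A chain-rule computation shows that the differential of any composition $\GG_a^k\to X$ of $\GG_a$-actions starting at $x$ has, at every parameter value, image contained in $T_{\mu(\cdot)}^0$, hence rank at most $d$. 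The images of such compositions cover $\SAut(X)\cdot x=X^{\reg}$, so $d=\dim X$ and $X$ is flexible.

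The main implication (iii) $\Rightarrow$ (ii) proceeds by induction on $m$, with (i) as the base case. The essential device is the \emph{modification trick}: if $\delta\in\LND(\mathcal{O}(X))$ and $f\in\ker\delta$, then $f\delta$ is again locally nilpotent, and the associated $\GG_a$-subgroup fixes the zero locus of $f$. Given the induction hypothesis and targets $(q_1,\dots,q_{m+1})$, I would reduce via $m$-transitivity to the case $p_i=q_i$ for $i\leq m$ and then seek an element of the pointwise stabilizer of $\{p_1,\dots,p_m\}$ in $\SAut(X)$ carrying $p_{m+1}$ to any prescribed target in $X^{\reg}\setminus\{p_1,\dots,p_m\}$. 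To this end, construct LNDs of the form $f_j\delta_j$, where the $\delta_j$ witness flexibility at $p_{m+1}$ and each $f_j\in\ker\delta_j$ vanishes at all $p_i$ ($i\leq m$) while $f_j(p_{m+1})\neq 0$, so that the vectors $(f_j\delta_j)(p_{m+1})$ span $T_{p_{m+1}}X$. The product-and-open-image argument from (iii) $\Rightarrow$ (i), applied inside the stabilizer, then yields a neighborhood of $p_{m+1}$ in its stabilizer-orbit, and irreducibility of $X^{\reg}\setminus\{p_1,\dots,p_m\}$ upgrades this to transitivity on the complement. The main obstacle is precisely the existence of such modifications: one must ensure that the \emph{saturated} span at $p_{m+1}$, after imposing vanishing at finitely many auxiliary points, still equals $T_{p_{m+1}}X$. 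Since each $\ker\delta_j$ is a subalgebra of $\mathcal{O}(X)$ of transcendence degree $\dim X-1\geq 1$, it is rich enough to separate finite point configurations generically, but handling degenerate configurations and assembling the modifications compatibly is the crux of the argument.
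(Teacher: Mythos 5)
Your overall route is the same as the one in \cite{AFKKZ-1}, which this paper follows: (i)$\Leftrightarrow$(iii) via the observation that conjugation-invariance of the family of $\GG_a$-subgroups makes the span of locally nilpotent vector fields at a point equal to the tangent space of the orbit (Proposition~\ref{p18} and Corollary~\ref{c121}), and (i)$\Rightarrow$(ii) via replicas $f\partial$ with $f\in\ker\partial$ vanishing on the points to be fixed (Theorems~\ref{ccaa} and~\ref{ccbb}). Your treatment of the easy implications is essentially correct; in (i)$\Rightarrow$(iii) you implicitly need that a \emph{single} finite composition $H_1\cdots H_s$ already dominates the orbit (Proposition~\ref{ccff}(b)) before invoking generic smoothness, since over a countable field an increasing union of constructible images of smaller dimension could still cover $X^{\reg}$; this is a fixable omission.

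The genuine gap is in the step you yourself flag as the crux, and it is not a minor technicality but the actual content of the hard implication. To build the replica $f_j\delta_j$ you need $f_j\in\ker\delta_j$ with $f_j(p_i)=0$ for $i\le m$ and $f_j(p_{m+1})\neq 0$. Functions in $\ker\delta_j$ are constant on the closures of the orbits of the flow of $\delta_j$, and the quotient $X\to\Spec\ker\delta_j$ separates only \emph{generic} orbits. So if some $p_i$ lies on the same $\delta_j$-orbit closure as $p_{m+1}$, or in a fiber of the quotient not separated from that of $p_{m+1}$, no such $f_j$ exists, and the configuration cannot be dismissed as non-generic: the points $p_1,\dots,p_{m+1}$ are arbitrary. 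Saying that $\ker\delta_j$ ``is rich enough to separate finite point configurations generically'' does not close this, because you must handle every configuration. The resolution in \cite[Theorem~2.5]{AFKKZ-1} requires introducing the orbit separation property, replacing the $\delta_j$ by suitable conjugates and exploiting the transversality theorem (Theorem~\ref{ccgg}) to move the configuration into separating position first; the paper explicitly notes that this ``requires considerable technical efforts.'' Until you supply that argument (or an alternative), the induction step, and hence (i)$\Rightarrow$(ii), is not proved.
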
 

In other words, the transitivity of the group $\SAut(X)$ implies infinite transitivity, and these conditions are equivalent to the flexibility condition formulated in infinitesimal terms. Note that in \cite[Theorem~2]{APS} and \cite[Theorem~1.11]{FKZ} Theorem~\ref{tmain} is generalized to the case of irreducible quasi-affine varieties of dimension~$\ge 2$.

Let us take a closer look at the structure of this paper. In the first section, following~\cite{AFKKZ-1}, we define algebraically generated subgroups of the automorphism group of an algebraic variety as subgroups generated by a family of algebraic subgroups. Note that the group $\SAut(X)$ defined above is algebraically generated. Actions of such subgroups have a number of properties in common with the actions of finite-dimensional groups. Such properties include the local closedness of orbits, as well as an analogue of Rosenlicht's theorem on the separation of orbits by rational invariants and an analogue of Kleiman's theorem on transversality. After that we proceed to the discussion on locally nilpotent derivations of affine algebras. The technique of locally nilpotent derivations and the construction of replicas of $\GG_a$-subgroups are among the main technical tools of this theory. The discussion of this technique ends with the proof of  equivalence of conditions (i)\! and (iii)\! in Theorem~\ref{tmain}. Finally, in the last part of this section we discuss results which lead to implication (i)~$\!\Rightarrow$\!~(ii) in Theorem~\ref{tmain}. In fact, we deal with more general setup: infinite transitivity is considered on an open orbit, which need not coincide with the smooth locus of the variety, and the acting group need not be $\SAut(X)$, but any subgroup of this group generated by a saturated family of $\GG_a$-subgroups.

\smallskip

Section~2 is devoted to the study of properties of flexible varieties established in \cite{AFKKZ-1,FKZ} and subsequent papers. We show that the existence of a flexible point on an affine variety is equivalent to the triviality of the Makar-Limanov field invariant and prove that every such variety is unirational. Bogomolov's conjecture~\cite{BKK} is also discussed here. It offers a characterization of unirational varieties in terms of stably infinitely transitive birational models. We consider collectively infinitely transitive actions, the $\AA^1$-richness property, and a generalization of the Gromov-Winkelmann Theorem to arbitrary flexible varieties obtained in the work of Flenner, Kaliman and Zaidenberg~\cite{FKZ}. The theorem claims that flexibility is preserved when passing to a quasi-affine variety obtained from a flexible affine variety by removing a subvariety of codimension $\ge 2$. We also consider the question of when the automorphism group of an affine variety $X$ acts on the smooth locus $2$-transitively. In the case when some affine algebraic group of positive dimension acts non-identically on $X$, this condition is equivalent to the flexibility of the variety~\cite{Ar}.

\smallskip

In Section~3 we consider examples of flexible varieties. The presentation begins with the already mentioned construction of suspension. Then non-degenerate affine toric varieties are considered. To prove flexibility, the root $\GG_a$-subgroups and related Demazure roots are used. A generalization of this result is the theorem on flexibility of non-degenerate affine
horospherical varieties proved in recent papers by Gaifullin and Shafarevich~\cite{Sh,GS}. Flexibility of affine cones over projective varieties is studied, in particuar over del Pezzo surfaces~\cite{PW,Pe-2} and higher dimensional Fano varieties~\cite{MPS,PZ-2}. Here the proof of flexibility uses the construction of a cylindrical subset on a projective variety proposed by Kishimoto, Prokhorov and Zaidenberg, see e.g.~\cite{CPPZ}. We discuss the flexibility property for a universal torsor and for a total coordinate space over varieties covered by open charts  isomorphic to an affine space~\cite{APS}. These objects arise in the theory of Cox rings. The flexibility of Gizatullin surfaces and Calogero-Moser varieties is also considered.

\smallskip

In Section~4 we discuss a recently discovered effect --- infinite transitivity can occur for subgroups of the automorphism group generated by a finite number of $\GG_a$-subgroups. In the case of affine spaces, this result can be extracted from the theorems of Derksen and Bodnarchuk. In~\cite{AKZ-2}, we formulate a conjecture that for any flexible affine variety there exists such a finite set of one-dimensional unipotent subgroups that the group generated by them acts infinitely transitively on its open orbit. This conjecture is proved in~\cite{AKZ-2} for non-degenerate affine toric varieties of dimension $\ge 2$, which are nonsingular in codimension $2$. It is known that in the case of an affine space, three generating $\GG_a$-subgroups are sufficient for infinite transitivity. The ind-structure on the automorphism group $\Aut(X)$ and the passage from the subgroup $G$ to its closure $\overline{G}$ in the ind-topology play an important role here. In the last part, we follow the paper~\cite{AZ} and discuss the Tits alternative for groups generated by a finite number of root subgroups in the automorphism group of an affine toric variety.

\smallskip

The last part of the survey consists of two appendices. Appendix~A is devoted to holomorphic flexibility and infinite transitivity for groups of automorphisms of complex analytic manifolds, as well as to the explanation of the connection of concepts and constructions introduced above with the Andersen-Lempert theory, Gromov sprays, Oka manifolds and Oka maps.

In Appendix~B we prove several results on infinite transitivity of actions of abstract groups. To the reader who has not worked with infinitely transitive actions before, we recommend to read this appendix before reading the main part of the text. Here we prove the infinite transitivity for a normal subgroup of an infinitely transitive group. Following~\cite{MD}, we construct an example of an infinitely transitive action of a $2$-generated group and give a brief overview of the known facts on existence of infinitely transitive effective actions of finitely generated groups. Such actions are implemented both on abstract sets and on infinite graphs, metric spaces and other natural objects. Recently, this direction has been actively developing, but this happens without any connection with the results on infinite transitivity of automorphism groups of algebraic varieties. It is natural to expect that in the future the two theories will enrich each other.

\smallskip

Note also that the survey contains some original results and observations. For example, Proposition~\ref{relnew} seems to be new. 

\smallskip

In our opinion, the theory of flexible varieties is currently at its zenith. On the one hand, this area has proved its right to exist: a number of general structural theorems have been obtained, interesting applications and consequences of the results on flexibility have been found, it has been shown that wide classes of affine varieties have the flexibility property, nontrivial connections with other areas of mathematics have been established. On the other hand, a number of important open questions and conjectures are connected with this topic, which seem to be accessible, including within the framework of already developed techniques, and which are interesting to work on. We are confident that in the coming years the theory of flexible varieties will be supplemented with new results, and we hope that this survey will contribute to the further development of this area.

\smallskip

The author would like to take this opportunity to thank his co-authors and colleagues with whom he works together in this field or discusses related issues. He is sincerely grateful to Mikhail Zaidenberg, J\"urgen Hausen, Shulim Kaliman, Frank Kutzschebauch, Hubert Flenner, Hendrik S\"uss, Gene Freudenburg, Alvaro Liendo, Yuri Prokhorov, Dmitry Timashev, Karine Kuyumzhiyan, Sergey Gaifullin, Alexander Perepechko and many others. Special thanks are due to the referee for a careful reading of the text and many valuable comments.

\section{Basic facts on infinitely transitive actions and flexible varieties}
\label{sec-1}
    
\subsection{Infinite transitivity on an affine space} We start this section by proving a well-known fact: for $n\ge 2$, the group $\Aut(\AA^n)$ acts on $\AA^n$ infinitely transitively. This result hardly makes sense to associate with a specific work, it has a folklore nature and can be proved by a junior math student who is familiar with the concept of an interpolation polynomial. It is also an obvious consequence of Theorem~\ref{tmain}: the space $\AA^n$ is flexible, since the tangent space at each point is generated by tangent vectors to the orbits of one-parameter subgroups of parallel translations along the coordinate axes. Nevertheless, we give here a direct proof of the infinite transitivity of the action of the automorphism group, since it illustrates a number of ideas and methods underlying this theory. An affine space is considered here over an arbitrary infinite field.

\begin{proposition}
For $n\ge 2$, the group $\Aut(\AA^n)$ acts on $\AA^n$ infinitely transitively.
\end{proposition}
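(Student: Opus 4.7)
The plan is to invoke the characterization recalled in the introduction: infinite transitivity of a group action is equivalent to the statement that the pointwise stabilizer of any finite subset acts transitively on the complement. So, given distinct points $p_1,\dots,p_{m-1},s,t\in\AA^n$ with $s\neq t$, it suffices to produce an element of $\Aut(\AA^n)$ fixing each $p_i$ and sending $s$ to $t$. The construction will be elementary, using only Lagrange interpolation in one and several variables together with triangular (``shear'') automorphisms.

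First I would apply a generic invertible linear automorphism of $\AA^n$ to put the configuration in general position: specifically, to arrange that any two of the $m+1$ points $p_1,\dots,p_{m-1},s,t$ differ in every coordinate. Since $\KK$ is infinite, this genericity is achievable by avoiding finitely many Zariski-closed proper conditions on $\GL_n(\KK)$. In particular, the last coordinates $a_1,\dots,a_{m-1}$ of the $p_i$ are pairwise distinct and all distinct from the last coordinate $s_n$ of $s$, and the projections of the $p_i$ to the first $n-1$ coordinates are pairwise distinct from the projection $(t_1,\dots,t_{n-1})$ of $t$.

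Next, I would realize the required automorphism as a composition of two triangular shears. The first has the form
\[
\phi(x_1,\dots,x_n)=\bigl(x_1+h_1(x_n),\dots,x_{n-1}+h_{n-1}(x_n),\,x_n\bigr),
\]
where each $h_j\in\KK[t]$ is chosen, by Lagrange interpolation at the $m$ distinct points $a_1,\dots,a_{m-1},s_n$, to satisfy $h_j(a_i)=0$ for $i=1,\dots,m-1$ and $h_j(s_n)=t_j-s_j$. Then $\phi$ is a $\GG_a$-action fixing every $p_i$ and sending $s$ to $s':=(t_1,\dots,t_{n-1},s_n)$. The second shear has the form
\[
\psi(x_1,\dots,x_n)=\bigl(x_1,\dots,x_{n-1},\,x_n+g(x_1,\dots,x_{n-1})\bigr),
\]
with $g\in\KK[x_1,\dots,x_{n-1}]$ vanishing at the projections of the $p_i$ to the first $n-1$ coordinates and equal to $t_n-s_n$ at $(t_1,\dots,t_{n-1})$. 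Such $g$ is built as a scalar multiple of a product of $m-1$ linear forms in $x_1,\dots,x_{n-1}$, one for each $p_i$, each vanishing at the projection of $p_i$ but not at $(t_1,\dots,t_{n-1})$; these linear forms exist by the genericity arrangement, and here is the one place the hypothesis $n\geq2$ is used, to guarantee the existence of such nonconstant polynomials in $n-1\geq1$ variables. Then $\psi\circ\phi$ fixes the $p_i$ and sends $s$ to $t$.

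The main obstacle I anticipate is the bookkeeping needed to ensure both shear steps succeed simultaneously: the initial linear change must produce distinctness of last coordinates for $\phi$ \emph{and} distinctness of projections to the first $n-1$ coordinates for $\psi$. The uniform arrangement ``every pair of the $m+1$ points differs in every coordinate'' takes care of both conditions at once, so this is really only a matter of verifying that the relevant set of linear changes is nonempty over an infinite field. In dimension $n=1$ the argument genuinely breaks, in agreement with the fact that $\Aut(\AA^1)=\mathrm{Aff}_1(\KK)$ is only $2$-transitive.
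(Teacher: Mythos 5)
Your proof is correct. It runs on the same engine as the paper's --- Lagrange interpolation fed into triangular shear automorphisms --- but the decomposition is genuinely different in two respects. First, you reduce to the pointwise-stabilizer criterion (fix $p_1,\dots,p_{m-1}$, move $s$ to $t$), whereas the paper proves $m$-transitivity directly by carrying an arbitrary $m$-tuple to the standard position $(1,\dots,1),\dots,(m,\dots,m)$; your reduction means you only ever have to move one point. Second, you achieve coordinate separation in one stroke with a generic element of $\GL_n(\KK)$ (a nonempty condition since $\KK$ is infinite and the bad locus is a proper closed subset of $\Mat_n$), whereas the paper's Step~1 accomplishes the analogous separation of first coordinates by an iterated sequence of shears $(x_1+f(x_2),x_2,\dots,x_n)$, carefully choosing the interpolated value $a$ so as not to destroy inequalities already achieved. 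Your version is cleaner on this point and avoids the paper's bookkeeping about not collapsing previously separated pairs; the paper's version has the mild virtue of using only elementary automorphisms with $\lambda=1$, so it exhibits infinite transitivity already for the subgroup they generate (which sits inside $\SAut(\AA^n)$), while you additionally invoke a linear map --- still tame, so nothing of substance is lost. One small quibble: you locate the use of $n\ge 2$ only in the second shear $\psi$, but the first shear $\phi$ equally requires a coordinate other than $x_n$ to displace; this does not affect correctness. Both arguments work over an arbitrary infinite field, as the paper intends.
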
 

\begin{proof}
We fix a positive integer $m$ and choose $m$ pairwise distinct points
$$
(x_{11},\ldots,x_{n1}),\ldots,(x_{1m},\ldots,x_{nm}).
$$
It suffices to prove that these points can be moved to a standard position by a suitable automorphism of the space $\AA^n$. Over a field of characteristic zero, we can assume that a standard position is the set
$$
(1,\ldots,1), \ldots, (m,\ldots,m),
$$
and over fields of positive characteristic, obvious changes need to be made in the arguments given below.

\smallskip

\noindent{\it Step $1$.}  We are going to ensure that $x_{1 i}\ne x_{1 j}$ for all $i\ne j$. We consistently increase the number of such inequalities. Let us suppose that $x_{11}=x_{12}$. Then we can assume that $x_{21}\ne x_{22}$. Let us find a polynomial $f(x)$ such that $f(x_{21})=a$ and $f(x_{2i})=0$ for all $x_{2i}\ne x_{21}$, where $a\ne x_{1i}-x_{1j}$ for all $i,j=1,\ldots,m$. Applying the automorphism $(x_1+f(x_2),x_2,\ldots,x_n)$ of $\AA^n$ to our set, we achieve that $x_{11}\ne x_{12}$ and the inequalities of the first coordinates of the points that took place before will not turn to equalities after such a transformation. Thus, after a finite number of such transformations, we obtain the required.

\smallskip

\noindent{\it Step $2$.}  Now we assume that $x_{1i}\ne x_{1j}$ for all $i\ne j$. Let us take such polynomials $f_2(x),\ldots,f_n(x)$ that $x_{si}+f_s(x_{1i})=i$ for all $s=2,\ldots,n$ and ${i=1,\ldots,m}$. After applying the automorphism 
$$
(x_1,x_2+f_2(x_1),\ldots,x_n+f_n(x_1))
$$
we obtain a set of points
$$
(x_{11},1,\ldots,1),\ldots,(x_{1m},m,\ldots,m). 
$$

\smallskip

\noindent{\it Step $3$.} Take a polynomial $h(x)$ such that
$$
x_{1i}+h(i)=i
$$
for all $i=1,\ldots,m$. Then the automorphism $(x_1+h(x_2),x_2,\ldots,x_n)$ moves our set to the standard position. 
\end{proof}

\begin{remark}
For $n=1$, the group $\Aut(\AA^1)$ is isomorphic to the semidirect product ${\GG_m\rightthreetimes\GG_a}$, where the torus $\GG_m$ acts by scalar multiplications and the group $\GG_a$ acts by parallel translations. This implies that $\Aut(\AA^1)$ acts on $\AA^1$ $2$-transitively, but not $3$-transitively.
\end{remark}
  
\subsection{Algebraically generated automorphism groups} Let $X$ be an algebraic variety over the ground field $\KK$. An \emph{algebraic subgroup} of the automorphism group $\Aut(X)$ is a subgroup $H\subseteq\Aut(X)$ with a structure of an algebraic group such that the action map $H\times X\to X$ is a morphism of algebraic varieties. We say that a subgroup $G$ of the group $\Aut(X)$ is \emph{algebraically generated} if it is generated as an abstract group by a family $S$ of connected algebraic subgroups of the group $\Aut(X)$.

\begin{proposition}[{\cite[Propositions~1.3 and~1.5]{AFKKZ-1}}] \label{ccff} 
Consider an algebraically generated subgroup $G\subseteq\Aut(X)$. Then
\begin{enumerate}
\item[(a)]
for any point $x\in X$, the orbit is $G.x$ is locally closed in $X$\textup;
\item[(b)]
there is such a finite set of not necessarily pairwise distinct subgroups $H_1,\ldots, H_s\in S$ that
$$
G.x=H_1.(H_2. \cdots (H_s.x))
$$
for any point $x\in X$. 
\end{enumerate}
\end{proposition}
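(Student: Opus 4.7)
The plan is to establish both assertions simultaneously through a dimension-maximization argument followed by a quasi-compactness argument. For any finite tuple $\underline{H}=(H_{i_1},\ldots,H_{i_k})$ of subgroups from the generating family $S$, write
$$
A_{\underline{H}}(x):=H_{i_1}.H_{i_2}.\cdots.H_{i_k}.x\subseteq G.x
$$
for the image of the action morphism $H_{i_1}\times\cdots\times H_{i_k}\to X$ sending $(h_1,\ldots,h_k)$ to $h_1\cdots h_k.x$. By Chevalley's theorem each $A_{\underline{H}}(x)$ is constructible and its closure is irreducible; since $S$ generates $G$ as an abstract group, the union of all these sets equals $G.x$.

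First I would fix $x$ and choose a tuple $\underline{H}^*=(H_1,\ldots,H_s)$ maximizing $d:=\dim\overline{A_{\underline{H}^*}(x)}$, setting $A=A_{\underline{H}^*}(x)$ and $Y=\overline{A}$. The key claim is that $Y$ is $G$-invariant. For any $H\in S$, the constructible image $H.A$ satisfies $A\subseteq H.A\subseteq A_{(H,\underline{H}^*)}(x)$; maximality of $d$ gives $\dim\overline{H.A}\le d$, and since $H.A$ contains $A$ which is dense in the irreducible closed set $Y$, its closure must coincide with $Y$, forcing $H.A\subseteq Y$. Propagating from $H\times A$ to $H\times Y$ by continuity of the action morphism together with the closedness of $Y$ then yields $H.Y\subseteq Y$. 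Since $S$ generates $G$, this shows $G.Y\subseteq Y$; combined with $x\in A\subseteq Y$ one obtains $Y=\overline{G.x}$, and each $g\in G$ restricts to a regular automorphism of $Y$.

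For (a), I would invoke Chevalley once more: the constructible set $A$ contains a dense open subset $U$ of the irreducible variety $Y$. For any $y\in G.x$ and any $u\in U$, writing $y=g_1.x$ and $u=g_2.x$ gives $g_1g_2^{-1}.u=y$, so $y\in g_1g_2^{-1}.U$; together with the inclusions $g.U\subseteq G.x$ for every $g\in G$, this yields $G.x=\bigcup_{g\in G}g.U$, a union of open subsets of $Y$. Hence $G.x$ is open in $Y=\overline{G.x}$, which is the locally closed property.

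Finally, for (b), I would use that $Y$, being a variety, is a Noetherian topological space, so the open subset $G.x$ is quasi-compact; extracting a finite subcover from $\{g.U\}_{g\in G}$ yields $G.x=g_1.U\cup\cdots\cup g_m.U$ for some $g_1,\ldots,g_m\in G$. Writing each $g_i$ as a word $h_{i,1}\cdots h_{i,k_i}$ with $h_{i,j}$ in some $H_{i,j}\in S$ provides tuples $\underline{H}_i=(H_{i,1},\ldots,H_{i,k_i})$. The concatenation $\underline{H}':=(\underline{H}_1,\ldots,\underline{H}_m,\underline{H}^*)$ has the desired property: setting all coordinates outside a single block $\underline{H}_i$ and the trailing $\underline{H}^*$ equal to the identity shows $A_{\underline{H}'}(x)\supseteq g_i.A\supseteq g_i.U$ for each $i$, so $A_{\underline{H}'}(x)\supseteq\bigcup_i g_i.U=G.x$, and the reverse inclusion is automatic. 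The step I expect to be trickiest is the transition from the constructible inclusion $H.A\subseteq Y$ to the full inclusion $H.Y\subseteq Y$; once this density-to-closure argument is made rigorous, both the local closedness and the finite-product realization follow from routine covering considerations.
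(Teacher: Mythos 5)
Your part (a) is correct and is essentially the argument of \cite{AFKKZ-1}: among the constructible, irreducible sets $A_{\underline{H}}(x)$ you maximize the dimension of the closure, show that this closure $Y$ is $G$-invariant and hence equals $\overline{G.x}$, extract by Chevalley a dense open $U\subseteq Y$ inside $A$, and cover $G.x$ by translates of $U$. The step you flag as delicate is fine: the action morphism $\mu\colon H\times X\to X$ has $\mu^{-1}(Y)$ closed in $H\times X$ and containing $H\times A$, hence containing $H\times\overline{A}=H\times Y$, so $H.Y\subseteq Y$.

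Part (b), however, has a genuine gap. The proposition asserts the existence of \emph{one} sequence $(H_1,\ldots,H_s)$ such that $G.x=H_1.(H_2.\cdots(H_s.x))$ for \emph{every} $x\in X$; the quantifier order is ``there exists $\mathcal{H}$ such that for all $x$''. Your construction produces for each fixed $x$ a sequence depending on $x$: the maximizing tuple $\underline{H}^*$, the open set $U$, the finitely many translates $g_1.U,\ldots,g_m.U$, and hence the blocks $\underline{H}_i$, are all chosen after $x$ is fixed. What you prove is the weaker statement ``for all $x$ there exists $\mathcal{H}_x$''. Since $X$ may have infinitely many orbits, this does not yield the uniform statement by concatenation alone, and the uniformity is exactly what is needed in the subsequent applications (the statement on tangent spaces to orbits and the Rosenlicht-type theorem).

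The repair requires two additional ingredients. First, the family of sequences is directed: each $H\in S$ contains the identity, so if $\mathcal{H}$ realizes $G.x$ and $\mathcal{H}'$ realizes $G.x'$, then the concatenation $\mathcal{H}\mathcal{H}'$ realizes both, because $\mathcal{H}.y\subseteq\mathcal{H}\mathcal{H}'.y\subseteq G.y$ for every $y$. Second, one must run your dimension-maximization \emph{in families}: consider the constructible subsets $Z_{\mathcal{H}}\subseteq X\times X$ obtained as images of $(h,x)\mapsto(x,h.x)$; their closures form a directed family of closed subsets of the Noetherian space $X\times X$ and therefore stabilize, and Chevalley's theorem applied to this family produces a single sequence realizing $G.x$ for all $x$ in a dense open subset of $X$. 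Noetherian induction on the closed complement, combined with the directedness above to merge the sequences obtained at each stage, then gives the uniform sequence. This is the route taken in \cite{AFKKZ-1} for Proposition~1.5, and it is the piece missing from your proposal.
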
 

\begin{proposition}[{\cite[Proposition~1.8]{AFKKZ-1}}]
 \label{p18}
Suppose \textup that a generating family $S$ of connected algebraic subgroups is invariant under conjugation by elements of the group $G$. Then there is such a sequence $(H_1,\ldots, H_s)$ of subgroups in $S$ that for all points $x\in X$ the tangent space $T_x(G.x)$ to the orbit $G.x$ is generated by tangent spaces
$$
T_x(H_1.x),\ldots ,T_x(H_s.x).
$$
\end{proposition}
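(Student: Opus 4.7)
The plan is to combine three ingredients: (i) the pointwise finite expression of orbits supplied by Proposition~\ref{ccff}(b), (ii) generic smoothness together with the $G$-invariance of the family $S$ to promote a surjective differential at a generic point of the source to surjectivity at the identity, and (iii) a Noetherian quasi-compactness argument that glues finitely many locally-adapted sequences into one universal sequence.

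First I would establish a pointwise version. Fix $x \in X$ and, by Proposition~\ref{ccff}(b), choose $(H_1, \ldots, H_s)$ in $S$ with $G.x = H_1 \cdots H_s.x$. The morphism
\[
\Phi\colon H_1 \times \cdots \times H_s \to G.x,\qquad (h_1,\ldots,h_s)\mapsto h_1\cdots h_s.x,
\]
is surjective, so generic smoothness in characteristic zero yields $\eta = (h_1^\circ, \ldots, h_s^\circ)$ at which $d\Phi$ is surjective onto $T_{x'}(G.x)$, where $x' = \Phi(\eta)$. Now the rewriting
\[
(h_1^\circ k_1)(h_2^\circ k_2)\cdots (h_s^\circ k_s) = (g_1 k_1 g_1^{-1})(g_2 k_2 g_2^{-1})\cdots (g_s k_s g_s^{-1})\cdot g,
\]
with $g_i = h_1^\circ\cdots h_i^\circ$ and $g = g_s$, identifies $\Phi$, after left translation on the source and a conjugation reparametrization, with the map
\[
\Psi\colon H'_1\times\cdots\times H'_s\to X,\qquad (k'_1,\ldots,k'_s)\mapsto k'_1\cdots k'_s. x',
\]
where $H'_i := g_i H_i g_i^{-1}$. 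The $G$-invariance of $S$ guarantees $H'_i \in S$, and the differential of $\Psi$ at the identity equals $\sum_i T_{x'}(H'_i.x')$; since the two maps differ by isomorphisms of source and target, this sum is all of $T_{x'}(G.x)$. Conjugating the sequence once more by $g^{-1}$ and invoking $G$-invariance of $S$ again yields a sequence in $S$ whose sum of tangent spaces at the original point $x$ fills out $T_x(G.x)$.

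Next I would stratify and cover. Define the \emph{good set} of a sequence $\vec H = (H_1,\ldots,H_s)$ by
\[
U_{\vec H} := \{x\in X : T_x(H_1.x)+\cdots+T_x(H_s.x) = T_x(G.x)\},
\]
and partition $X = \bigsqcup_d X^{(d)}$ with $X^{(d)} = \{x : \dim G.x = d\}$, a finite locally closed stratification (each orbit being locally closed by Proposition~\ref{ccff}(a), presented as the image of a morphism of algebraic varieties by Proposition~\ref{ccff}(b)). On each stratum the function $x\mapsto \dim \sum_i T_x(H_i.x)$ is lower semicontinuous, so $U_{\vec H}\cap X^{(d)}$ is open in $X^{(d)}$. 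The pointwise step shows these open sets cover $X^{(d)}$ as $\vec H$ ranges over finite sequences in $S$, and Noetherian quasi-compactness of $X^{(d)}$ extracts a finite subcover. Since concatenation of sequences only enlarges the pointwise sum of tangent spaces, the concatenation of this finite subcover has good set equal to $X^{(d)}$, and a further concatenation across the finitely many strata yields a single sequence with good set equal to~$X$.

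The main obstacle is Step~1: turning surjectivity of $d\Phi$ at a generic point of the source into surjectivity at the identity. Here the conjugation invariance of $S$ is indispensable, since the conjugated subgroups $H'_i$ must themselves belong to $S$ in order to qualify as candidates for the statement of the proposition. The remaining Steps~2 and~3 are fairly routine, but one has to be a little careful that the orbit-dimension stratification is well-behaved for algebraically generated (rather than algebraic) groups; Proposition~\ref{ccff} is exactly what reduces this point to a statement about morphisms of algebraic varieties.
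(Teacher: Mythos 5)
Your proof is correct and is essentially the argument of the original source: the survey itself only cites \cite[Proposition~1.8]{AFKKZ-1} without reproducing the proof, and that proof likewise combines the uniform finite product of Proposition~\ref{ccff}(b), generic smoothness of the composite orbit map in characteristic zero, the conjugation reparametrization (which is where the $G$-invariance of $S$ enters) to transport surjectivity of the differential to the identity and then back to the base point, and a semicontinuity-plus-Noetherian gluing step using that concatenation only enlarges the span. The only cosmetic remark is that your justification for the strata $X^{(d)}$ being locally closed is beside the point (local closedness of individual orbits does not give it directly; lower semicontinuity of $x\mapsto\dim G.x$ does), but nothing in your argument actually needs it, since any subspace of a Noetherian space is quasi-compact.
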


We consider an algebraically generated subgroup $G\subseteq\Aut(X)$ with a generating family $S$ of connected algebraic subgroups that is invariant under conjugation by elements of the group $G$. We say that a point $x\in X^{\reg}$ is \emph{$G$-flexible} if the tangent space $T_x X$ is generated by subspaces $T_x(H.x)$, where $H\in S$. Then~\cite[Corollary~1.11]{AFKKZ-1} states that a point $x\in X^{\reg}$ is $G$-flexible if and only if the orbit $G.x$ is open in $X$. Further, if such an open orbit exists, then it is unique and consists exactly of all $G$-flexible points in $X^\reg$. Note also that the definition of a $G$-flexible point does not depend on the choice of a generating family $S$ of the group $G$.

\smallskip

Using the semi-continuity theorem for the dimension of fibers of a morphism, it is not difficult to prove~\cite[Corollary~1.12]{AFKKZ-1} that for an algebraically generated group $G\subseteq\SAut(X)$, the function $X\to\ZZ_{\ge 0}$, $x\mapsto\dim G.x$ is lower semi-continuous. In particular, the union of orbits of maximum dimension in $X$ is a Zariski-open subset.

The following result is an analogue of the well-known Rosenlicht theorem for algebraic groups~\cite[Theorem~2.3]{VP-1}.

\begin{theorem}[{\cite[Theorem~1.13]{AFKKZ-1}}] 
Let $G\subseteq\Aut(X)$ be an algebraically generated subgroup. Then there is a finite set of rational $G$-invariant functions which separate $G$-orbits in general position on~$X$.
\end{theorem}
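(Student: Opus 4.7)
The plan is to reduce the statement to the classical Rosenlicht theorem applied to finitely many algebraic subgroups, with Proposition~\ref{ccff}(b) as the decisive input. First I would invoke Proposition~\ref{ccff}(b) to choose a finite sequence $H_1,\ldots,H_s$ of connected algebraic subgroups from the generating family $S$ such that
\[
G.x=H_1.(H_2.\cdots(H_s.x))\qquad\text{for every } x\in X.
\]
This trades a potentially unwieldy generating family for a finite list that still determines the orbit partition of $G$ on $X$.

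Next I would identify the field of rational $G$-invariants $\KK(X)^G$. I claim that $f\in\KK(X)$ is $G$-invariant if and only if $f$ is $H_i$-invariant for every $i=1,\ldots,s$: the forward direction is immediate from $H_i\subseteq G$; conversely, if $f$ is invariant under each $H_i$, then iterating $f(h_j.y)=f(y)$ along the product decomposition of $G.x$ forces $f$ to be constant on every $G$-orbit meeting its domain. Hence $\KK(X)^G=\bigcap_{i=1}^{s}\KK(X)^{H_i}$, and as a subfield of the finitely generated extension $\KK(X)/\KK$ it is itself finitely generated over $\KK$ by a classical theorem. Fix generators $f_1,\ldots,f_r$; these are the candidate separating invariants.

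It remains to verify that $f_1,\ldots,f_r$ actually separate $G$-orbits in general position. Let $\pi\colon X\dashrightarrow Y$ be the dominant rational map corresponding to $\KK(X)^G\subseteq\KK(X)$; by construction $\pi$ is constant on each $G$-orbit in its domain. A dimension count using $\dim X-\dim Y=\trdeg_{\KK}\KK(X)/\KK(X)^G$, the semi-continuity of orbit dimension, and classical Rosenlicht for each $H_i$ combined with Proposition~\ref{ccff}(b), shows that the generic fibre of $\pi$ has the same dimension as a generic $G$-orbit. Together with local closedness of orbits from Proposition~\ref{ccff}(a), this forces each such fibre to be a finite union of $G$-orbits, and shrinking $Y$ further isolates single orbits.

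The main obstacle I expect is precisely this final step: upgrading \emph{finite union of $G$-orbits} to \emph{single $G$-orbit} on a generic fibre. I would handle it by mimicking Rosenlicht's original induction, applied here through the list $H_s,H_{s-1},\ldots,H_1$: at each stage the classical theorem provides a rational quotient for the action of a genuine algebraic group, and composing these quotients yields a map whose generic fibres are the orbits of $H_i.H_{i+1}.\cdots.H_s$; at the top of the tower one recovers the $G$-orbits, and the generators $f_1,\ldots,f_r$ of $\KK(X)^G$ separate them after restricting to a suitable open subset of $X$.
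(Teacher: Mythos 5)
Your steps (1)--(3) are fine and consistent with how the result is obtained: Proposition~\ref{ccff}(b) does reduce everything to a finite list $H_1,\ldots,H_s$, and $\KK(X)^G=\bigcap_i\KK(X)^{H_i}$ is finitely generated, giving the candidate map $\pi\colon X\dashrightarrow Y$. The paper itself gives no independent argument here: it records that the proof ``repeats almost verbatim the proof of the original Rosenlicht theorem'' from Popov--Vinberg, with \ref{ccff}(a) (local closedness of orbits) and \ref{ccff}(b) supplying the finiteness needed to run that proof. The genuine gap in your proposal is the final step, and the specific mechanism you propose for it does not work. There is no tower of rational quotients indexed by $H_s,H_{s-1},\ldots,H_1$: after the first classical Rosenlicht quotient $X\dashrightarrow X/H_s$, the group $H_{s-1}$ does not descend to an action on $X/H_s$ (it does not preserve $H_s$-orbits), so ``the classical theorem'' cannot be applied at the next stage. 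Equivalently, the intermediate sets $H_i.(H_{i+1}.\cdots(H_s.x))$ for $1<i\le s$ are not the equivalence classes of any relation --- two of them can overlap without coinciding --- so no rational map can have them as its generic fibres. Only the full product $H_1.(\cdots(H_s.x))=G.x$ partitions $X$. If instead you pass directly to the field $\KK(X)^{H_{s-1}}\cap\KK(X)^{H_s}$, its generic fibres already contain whole orbits of the group generated by $H_{s-1}$ and $H_s$, which is again an algebraically generated (generally infinite-dimensional) group; the induction is circular.

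The dimension count also stops short of the goal: it shows that a generic fibre of $\pi$ is a finite union of maximal-dimensional $G$-orbits, but ``shrinking $Y$ to isolate single orbits'' is not a step --- if every fibre were exactly two orbits, no shrinking would help, and ruling this out is precisely the content of Rosenlicht's theorem. The missing idea is to use Proposition~\ref{ccff}(b) a second time, not to identify the invariant field but to form the orbit correspondence
\begin{equation*}
W=\overline{\{(x,\,h_1\cdots h_s.x)\ :\ x\in X,\ h_i\in H_i\}}\subseteq X\times X,
\end{equation*}
which is irreducible as the closure of the image of the irreducible variety $H_1\times\cdots\times H_s\times X$ and whose fibre over a generic $x$ (under the first projection) is $\overline{G.x}$. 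Running Rosenlicht's original argument with $W$ in place of the action map of an algebraic group, together with local closedness of orbits from \ref{ccff}(a), is what upgrades ``finite union of orbits'' to ``single orbit'' on a generic fibre. That is the route the cited proof takes; your proposal needs this correspondence, not a composition of classical quotients.
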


The proof of this result repeats almost verbatim the proof of the original Rosenlicht theorem given in~\cite{VP-1}.

\begin{corollary}[{\cite[Corollary~1.14]{AFKKZ-1}}]
 \label{ccdd}
Let an algebraically generated group $G$ act on an irreducible variety $X$. Then 
$$
\trdeg(\KK(X)^G:\KK)=\min_{x\in X}\,\{ {\codim}_{X} G.x\}\, .
$$
In particular, there is an open $G$-orbit on $X$ if and only if $\KK(X)^G=\KK$.
\end{corollary}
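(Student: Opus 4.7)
The key input is the preceding Rosenlicht-type theorem (Theorem 1.13): it supplies a finite collection of rational $G$-invariants separating $G$-orbits in general position. My plan is to enlarge this collection to $f_1,\ldots,f_k \in \KK(X)^G$ so that it still separates orbits in general position and, in addition, contains a transcendence basis of $\KK(X)^G$ over $\KK$. Adjoining further $G$-invariants never destroys the separation property, so such an enlargement is available. Set $d := \trdeg(\KK(X)^G:\KK)$ and assemble these invariants into a dominant rational quotient map
$$
\phi = (f_1,\ldots,f_k) : X \dashrightarrow Y, \qquad Y := \overline{\phi(X)} \subset \AA^k,
$$
so that $\dim Y = \trdeg(\KK(f_1,\ldots,f_k):\KK) = d$.

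Next I would combine the separation property with the theorem on the dimension of fibers. Pick a dense open $U \subseteq X$ on which $\phi$ is defined and on which $\phi$ separates $G$-orbits, and a dense open $V \subseteq Y$ over which all fibers of $\phi$ have dimension $\dim X - d$. For a point $x \in U \cap \phi^{-1}(V)$, the fiber $\phi^{-1}(\phi(x))$ contains $\overline{G.x}$, and its intersection with $U$ equals $G.x \cap U$ by the separation property. Comparing dimensions yields $\dim G.x = \dim X - d$ for $x$ in a dense open subset of $X$, i.e. $\codim_X G.x = d$ generically.

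To promote this equality to the claimed minimum, I would invoke Corollary 1.12: the function $x \mapsto \dim G.x$ is lower semicontinuous, so the maximum of orbit dimensions (equivalently, the minimum of orbit codimensions) is attained on a dense open subset of $X$. Together with the previous paragraph this forces
$$
\min_{x\in X} \codim_X G.x = d = \trdeg(\KK(X)^G:\KK).
$$
The ``in particular'' assertion then follows: an open $G$-orbit exists iff the minimum codimension is $0$ iff $d=0$; since $\KK$ is algebraically closed, $\trdeg(\KK(X)^G:\KK) = 0$ forces $\KK(X)^G = \KK$.

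The only real subtlety — and the place where one must be careful — is the interplay between the two ``general position'' open sets: the one on which the Rosenlicht-type invariants separate orbits, and the one on which fibers of $\phi$ have the expected dimension. Both are dense open, so their intersection is dense open, and the argument goes through; but it is worth isolating this point since the rest of the proof is essentially formal manipulation with transcendence degree once Theorem 1.13 and Corollary 1.12 are in hand.
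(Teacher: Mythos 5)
Your argument is correct and is essentially the same as the one in the cited source: the survey states Corollary~\ref{ccdd} without proof, and the proof of \cite[Corollary~1.14]{AFKKZ-1} likewise combines the Rosenlicht-type separation theorem with the fiber-dimension theorem and the lower semicontinuity of $x\mapsto\dim G.x$ to identify the generic (hence minimal) orbit codimension with $\trdeg(\KK(X)^G:\KK)$. The one point you flag as a subtlety (intersecting the two dense open sets of general position) is handled exactly as you describe, and local closedness of orbits from Proposition~\ref{ccff} gives that a codimension-zero orbit is open.
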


If an algebraic group $G$ acts transitively on an algebraic variety $X$ and $Y,Z$ are smooth subvarieties in~$X$, Kleiman's Transversality theorem claims that a typical translate $g.Z$ ($g\in G$) intersects $Y$ transversally. The following generalization of this result takes place in the case of algebraically generated subgroups.

\begin{theorem}[{\cite[Theorem~1.15]{AFKKZ-1}}] 
\label{ccgg}
Let an algebraically generated subgroup $G\subseteq\Aut(X)$ be generated by a family $S$ of connected algebraic subgroups that is invariant under conjugation by elements of $G$. Suppose that $G$ acts with the open orbit $O\subseteq X$. Then there are such subgroups $H_1,\ldots, H_s\in S$ that for any locally closed reduced subschemes $Y$ and $Z$ in $O$ one can find such a Zariski-open subset $U=U(Y,Z)\subseteq H_1\times\ldots\times H_s,$ that any elements $(h_1,\ldots, h_s)\in U$ satisfy the following conditions:
\begin{enumerate}
\item[(a)] the translate $(h_1\cdot\ldots\cdot h_s).Z^\reg$ intersects $Y^\reg$ transversely\textup;
\item[(b)] $\dim (Y\cap (h_1\cdot\ldots\cdot h_s).Z)\le
\dim Y+\dim Z-\dim X$.
\end{enumerate}
In particular\textup, if $\dim Y+\dim Z<\dim X,$ then $Y\cap (h_1\cdot\ldots\cdot h_s).Z=\varnothing$.
\end{theorem}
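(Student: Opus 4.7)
My plan is to imitate Kleiman's original argument, replacing the transitively acting group by the (non-group) product $H_1\times\cdots\times H_s$ whose ``action map'' is nevertheless surjective onto the open orbit $O$.

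First, I would invoke Proposition~\ref{ccff}(b): since the family $S$ generates $G$ and is conjugation-invariant, there exist subgroups $H_1,\ldots,H_s\in S$ such that $G.x=H_1.(H_2.\cdots(H_s.x))$ for every $x\in X$. Fix these groups and consider the morphism
\[
\alpha\colon H_1\times\cdots\times H_s\times Z^{\reg}\longrightarrow X,\qquad (h_1,\ldots,h_s,z)\mapsto (h_1\cdots h_s).z.
\]
Because each orbit $G.z$ with $z\in Z^{\reg}\subseteq O$ equals $O$, the image of $\alpha$ is exactly $O$; in particular $\alpha$ is dominant and smooth at a generic point by generic smoothness (here I use $\Char\KK=0$). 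Pulling back along the inclusion $Y^{\reg}\hookrightarrow O$, form the fiber product
\[
W\;=\;(H_1\times\cdots\times H_s\times Z^{\reg})\times_{X}Y^{\reg},
\]
and consider the projection $\pi\colon W\to H_1\times\cdots\times H_s$.

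Next I would run Kleiman's dimension count. By construction the fiber $\pi^{-1}(h_1,\ldots,h_s)$ is isomorphic (via the second projection) to $Y^{\reg}\cap(h_1\cdots h_s).Z^{\reg}$. Since $\alpha$ is smooth on a nonempty open set $V\subseteq H_1\times\cdots\times H_s\times Z^{\reg}$ and $Y^{\reg}$ is smooth, the restriction of the first projection $W\cap(V\times_X Y^{\reg})\to H_1\times\cdots\times H_s$ is a smooth morphism of relative dimension $\dim Y+\dim Z-\dim X$ over a nonempty open set. Generic smoothness applied to this projection then yields a Zariski-open $U(Y,Z)\subseteq H_1\times\cdots\times H_s$ over which every fiber is smooth of that dimension (or empty), and the smoothness at each intersection point forces the tangent spaces of $Y^{\reg}$ and $(h_1\cdots h_s).Z^{\reg}$ to sum to $T_xX$, which is transversality~(a). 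The bound~(b) follows from the same morphism by upper semi-continuity of fiber dimension, noting that for arbitrary (possibly non-reduced/non-smooth) $Y$ and $Z$ we stratify by locally closed smooth pieces and intersect the finitely many resulting open subsets.

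The hard part is producing the single list $H_1,\ldots,H_s$ that works \emph{uniformly} for all pairs $(Y,Z)$: if one merely knew that for each $z\in Z^{\reg}$ the orbit map is surjective, the required subgroups might depend on $z$. This is precisely the content of Proposition~\ref{ccff}(b), which guarantees a single sequence working for every $x\in X$. Once that is in hand, the transversality step is standard generic smoothness, but one must be careful that passing from $Z^{\reg}$ to $Z$ in the dimension bound~(b) is not obstructed by components of $Z\setminus Z^{\reg}$; treating each stratum separately and intersecting the resulting open sets $U$ (still open because only finitely many strata appear) handles this without additional effort.
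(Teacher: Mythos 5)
The survey states this result only as a citation of \cite[Theorem~1.15]{AFKKZ-1} and gives no proof, so I am judging your argument against the actual content of that theorem rather than against a proof printed here.

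Your plan correctly identifies the Kleiman-type skeleton and correctly uses Proposition~\ref{ccff}(b) to get a single sequence $H_1,\ldots,H_s$ with $(H_1\cdots H_s).z=O$ for every $z$; but you have misidentified where the difficulty lies. The uniformity of the sequence is \emph{not} the hard part --- it is handed to you by Proposition~\ref{ccff}(b). The hard part, which your argument skips, is that $\mathcal{H}=H_1\times\cdots\times H_s$ is not a group, so the map $\alpha\colon \mathcal{H}\times Z^{\reg}\to X$ is not equivariant for a transitively acting group and there is no homogeneity argument making it smooth (or even flat, or equidimensional) over all of $O$. Kleiman's proof needs exactly this: smoothness of $G\times Z\to X$ \emph{everywhere} forces $W=(G\times Z)\times_X Y$ to be smooth of the expected dimension, and only then does generic smoothness of $W\to G$ give transversality at \emph{every} point of the fiber over a generic $g$. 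You replace this by generic smoothness of $\alpha$, which only yields a dense open $V$ in the \emph{source} $\mathcal{H}\times Z^{\reg}$; the complement of $V$ may dominate $\mathcal{H}$, so for every tuple $(h_1,\ldots,h_s)$ there may be intersection points of $Y^{\reg}$ with $(h_1\cdots h_s).Z^{\reg}$ arising from points outside $V$, and your argument says nothing about transversality there. Restricting to $W\cap(V\times_X Y^{\reg})$, as you do, proves at best that \emph{some} intersection points are transverse.

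The same defect breaks part~(b): your bound on the generic fiber dimension of $W\to\mathcal{H}$ needs $\dim W\le\dim\mathcal{H}+\dim Y+\dim Z-\dim X$, which requires the fibers $\alpha^{-1}(y)$ over \emph{all} $y\in Y^{\reg}$ to have the expected dimension $\dim\mathcal{H}+\dim Z-\dim X$. Upper semicontinuity gives this only for generic $y\in O$; if $Y$ sits inside the jumping locus of $\alpha$, the count fails. Repairing both points --- i.e., choosing the sequence so that the relevant orbit-type maps $\mathcal{H}\to O$ are submersive (or at least flat with the right fiber dimension) at every point, using the conjugation-invariance of $S$ --- is precisely the technical content of \cite[Theorem~1.15]{AFKKZ-1} that a complete proof must supply.
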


Note that in the work~\cite{Po} results on algebraically generated subgroups have been further generalized. A number of properties of actions including local closureness of orbits and an analogue of Rosenlicht's theorem on rational invariants, are proved here for any connected subgroup of the automorphism group of an arbitrary irreducible algebraic variety.


\subsection{Locally nilpotent derivations and replicas of $\GG_a$-subgroups}\label{1.3} For our further purposes it is important to know how to work with infinitesimal generators of $\GG_a$-subgroups of the automorphism group $\Aut(X)$ of an affine variety $X$. Below we formulate the related results.

\smallskip

(1) If the group $\GG_a$ acts on an affine variety $X=\Spec A$, then the derivation $\partial$ of the algebra $A$ associated with this action is \emph{locally nilpotent}, that is, for every $f\in A$ there is such an $n\in\NN$ that $\partial^n(f)=0$. This result is explained by the fact that each function $f$ lies in a finite-dimensional $\GG_a$-invariant subspace in $A$, and, by the Lie-Kolchin Theorem, in a suitable basis of this subspace the group $\GG_a$ acts by unitriangular matrices. Hence, the differential of such an action is given in this subspace by a nilpotent operator.

\smallskip

(2) Conversely, given a locally nilpotent $\KK$-linear derivation $\partial:A\to A$ and $t\in\KK$, the mapping $\exp(t\partial):A\to A$ defines an automorphism of the algebra $A$ and, consequently, of the variety~$X$. Thus, each locally nilpotent derivation $\partial\neq 0$ is associated with the $\GG_a$-subgroup $H=\exp(\KK\partial)$ in $\Aut(X)$; for more details see~\cite{Fr}. If we consider $\partial$ as a vector field on $X$, the action of $H$ on $X$ is exactly the phase flow associated with this field.

\smallskip

(3) The algebra of invariants $\KK[X]^H=\ker\partial$ has the transcendence degree ${\dim X-1}$. Further, each $H$-invariant function $f\in\KK[X]^H$ defines a new $\GG_a$-subgroup $H_f:=\exp(\KK f\partial)$. We call such subgroups \emph{replicas} of the subgroup $H$. Bellow such subgroups play a significant role.

The subgroup $H_f$ acts in the same direction as the subgroup $H$, but with a different velocity along the orbits. In particular, those $H$-orbits, where the function $f$ vanishes, consist of $H_f$-fixed points. The mapping $f\mapsto f\partial$ embeds $\ker\partial$ as an abelian subalgebra into the Lie algebra of all regular vector fields on $X$.

For a subgroup $G\subseteq\SAut(X)$, we denote by $\LND(G)$ the set of locally nilpotent vector fields on $X$ that generate $\GG_a$-subgroups contained in $G$. This set is invariant under conjugation by elements of $G$ and is a cone, that is, $\KK\cdot\LND(G)\subseteq \LND(G)$. In the following, we consider subsets $\cN\subseteq\LND(G)$ of locally nilpotent vector fields for which the associated subgroups $\{\exp(\KK\partial)\}$, $\partial\in\cN$ generate the group~$G$. For brevity, we say that $\cN$ generates the group~$G$ and write $G=\langle\cN\rangle$.

The following result can be deduced from Proposition~\ref{p18}.

\begin{corollary} [{\cite[Corollary~1.21]{AFKKZ-1}}] \label{c121}
\!For a subgroup $G\!=\!\langle\cN\rangle\!$ of the group $\Aut(X),$ where $\cN\subseteq\LND(G)$ is invariant under conjugation in $G$, there are locally nilpotent vector fields $\partial_1,\ldots,\partial_s\in\cN$ that generate the tangent space $T_x(G.x)$ at each point $x\in X$.
\end{corollary}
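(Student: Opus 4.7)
The plan is to derive this from Proposition~\ref{p18} by translating between $\GG_a$-subgroups and their infinitesimal generators. To $\cN$ I associate the family
$$
S = \{\exp(\KK\partial) : \partial \in \cN,\ \partial \neq 0\}
$$
of connected one-dimensional algebraic subgroups of $\Aut(X)$, as described in step (2) of Section~\ref{1.3}. Since $G = \langle\cN\rangle$, the family $S$ generates $G$ as an abstract group. Moreover, for any $g \in G$ the conjugate $g\exp(t\partial)g^{-1}$ equals $\exp(t\cdot g_{*}\partial)$, where $g_{*}\partial \in \cN$ by the assumed conjugation invariance of $\cN$. Hence $S$ is invariant under conjugation by elements of $G$, which is precisely the hypothesis needed to invoke Proposition~\ref{p18}.

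Applying Proposition~\ref{p18} to this $S$, I obtain a sequence $H_1, \ldots, H_s \in S$ such that for every $x \in X$
$$
T_x(G.x) = T_x(H_1.x) + \cdots + T_x(H_s.x).
$$
For each $i$, choose $\partial_i \in \cN$ with $H_i = \exp(\KK\partial_i)$. The orbit map $t \mapsto \exp(t\partial_i).x$ has derivative at $t = 0$ equal to the value of the vector field $\partial_i$ at $x$, so $T_x(H_i.x) = \KK\cdot\partial_i(x)$ (this subspace being zero exactly when $\partial_i(x) = 0$). Substituting into the displayed identity yields
$$
T_x(G.x) = \KK\cdot\partial_1(x) + \cdots + \KK\cdot\partial_s(x),
$$
which is the asserted pointwise generation of $T_x(G.x)$ by $\partial_1, \ldots, \partial_s$.

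The only nontrivial content of the corollary is already packaged inside Proposition~\ref{p18}, namely the existence of a single finite tuple of subgroups that works \emph{uniformly} for all base points $x$ and, in particular, captures the tangent space to every orbit of $G$ by finitely many fixed generators. Given that proposition, the step here is simply the standard exponential dictionary between locally nilpotent derivations and $\GG_a$-actions recalled in Section~\ref{1.3}, so no further technical machinery — semi-continuity of orbit dimensions, Rosenlicht-type separation, or Kleiman transversality — enters the argument.
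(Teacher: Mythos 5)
Your proof is correct and is exactly the deduction the paper intends: the survey introduces Corollary~\ref{c121} with the single remark that it ``can be deduced from Proposition~\ref{p18}'', and your argument supplies precisely that deduction via the standard dictionary $\partial\leftrightarrow\exp(\KK\partial)$, the conjugation identity $g\exp(t\partial)g^{-1}=\exp(t\,g_*\partial)$, and the identification $T_x(H_i.x)=\KK\cdot\partial_i(x)$. No discrepancy with the paper's approach.
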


For a point $x\in X$, let $\LND_x(G)\subseteq T_x X$ denote the cone of all tangent vectors $\partial(x)$, where $\partial$ runs through $\LND(G)$. According to Corollary~\ref{c121}, we have $T_x(G.x)=\langle\LND_x(G)\rangle$. Thus, a point $x\in X^\reg$ is $G$\nobreakdash-flexible if and only if the cone $\LND_x(G)$ generates the tangent space~$T_x X$.

Applying Corollary~\ref{c121} to the group $G=\SAut(X)$, we obtain equivalence (i)$\Leftrightarrow$(iii) in Theorem~\ref{tmain} from the Introduction.

\begin{corollary}\label{tr-fl} 
For an irreducible affine variety $X$, the action of the group $\SAut(X)$ on $X^\reg$ is transitive if and only if $X$ is flexible.
\end{corollary}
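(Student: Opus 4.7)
The plan is to apply Corollary~\ref{c121} to $G = \SAut(X)$ and combine it with the characterization of $G$-flexible points from the paragraph following Proposition~\ref{p18}. The whole argument is essentially a translation of definitions, so I expect no serious obstacle.

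First I would check that the cone $\cN := \LND(\SAut(X))$ is invariant under conjugation by $\SAut(X)$. This follows because $\SAut(X)$ is normal in $\Aut(X)$ (as noted in the Introduction), so any conjugate of a $\GG_a$-subgroup contained in $\SAut(X)$ is itself a $\GG_a$-subgroup contained in $\SAut(X)$, and the infinitesimal generator obtained by conjugation is again an element of $\LND(\SAut(X))$. With this conjugation-invariance verified, Corollary~\ref{c121} yields $T_x(\SAut(X).x) = \langle \LND_x(\SAut(X)) \rangle$ for every $x \in X$.

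Next I would translate flexibility into orbit-theoretic language. By the definition of a flexible point, $X$ is flexible iff for every $x \in X^\reg$ the cone $\LND_x(\SAut(X))$ spans $T_x X$, which by the previous step is equivalent to $T_x(\SAut(X).x) = T_x X$, which in turn --- by the characterization recalled from \cite[Corollary~1.11]{AFKKZ-1} --- is equivalent to the orbit $\SAut(X).x$ being open in $X$. In other words, a smooth point is flexible in the sense of the paper iff it is $\SAut(X)$-flexible in the sense of Section~1.2.

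From this reformulation the two directions fall out quickly. For flexibility $\Rightarrow$ transitivity, every smooth point lies in an open $\SAut(X)$-orbit; but the open $G$-orbit, when it exists, is unique and equals the set of all $G$-flexible points of $X^\reg$, forcing all these open orbits to coincide with $X^\reg$. For the converse direction, transitivity on $X^\reg$ means $\SAut(X).x = X^\reg$ for every smooth $x$, and $X^\reg$ is Zariski-open in the affine variety $X$; hence the orbit is open in $X$, so $x$ is $\SAut(X)$-flexible, and consequently $X$ is flexible. The only step calling for a moment's attention is the normality check used to invoke Corollary~\ref{c121}, and as indicated this is immediate.
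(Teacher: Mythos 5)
Your proposal is correct and follows essentially the same route as the paper: the paper's proof is precisely ``apply Corollary~\ref{c121} to $G=\SAut(X)$,'' relying on the surrounding observations that $T_x(G.x)=\langle\LND_x(G)\rangle$, that a smooth point is $G$-flexible iff its orbit is open, and that the open orbit is unique and consists of all $G$-flexible points of $X^\reg$ --- exactly the chain of equivalences you spell out. The conjugation-invariance check you flag is indeed immediate and is already noted in Section~\ref{1.3} of the paper.
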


\begin{example} \label{affspa}
We illustrate the notion of a replica and a special automorphism group $\SAut(X)$ in the case of the affine space $X=\AA^n$.
The group $\SAut(\AA^n)$ contains $\GG_a$-subgroups of parallel translations in all directions. The infinitesimal generator of such a subgroup coincides with the partial derivative in this direction, which is considered as a locally nilpotent derivation of the polynomial ring in $n$ variables. For example, in the case of the parallel translation along the first coordinate axis $(x_1+t,x_2,\ldots,x_n)$, $t\in\KK$, the corresponding derivation $\partial$ is $\frac{\partial}{\partial x_1}$. Replicas of such a subgroup are translations of the form $(x_1+f,x_2,\ldots,x_n)$ for an arbitrary polynomial $f\in\KK[x_2,\ldots,x_n]$.

As one more example, we consider the locally nilpotent derivation $\partial=x_1\frac{\partial}{\partial x_2}+x_2\frac{\partial}{\partial x_3}$ of the polynomial ring $\KK[x_1,x_2,x_3]$ and the invariant function ${f=x_2^2-2x_1x_3\in\ker\partial}$. The corresponding replica~$H_f$ contains the well-known automorphism of Nagata
$$
H_f(1)=\exp(f\cdot\partial)\in \SAut (\AA^3).
$$

Note that any automorphism from $\SAut(\AA^n)$ preserves the volume form on ~$\AA^n$. So we have
$$
\SAut(\AA^n)\subseteq J_n,
$$ 
where $J_n$ denotes the subgroup in $\Aut(\AA^n)$ consisting of all automorphisms such that the determinant of the Jacobi matrix is~$1$.

Let us recall that the subgroup of \emph{tame} automorphisms $\Tame_n\subseteq\Aut (\AA^n)$
is the subgroup generated by all {\em elementary} automorphisms $\sigma\in \Aut(\AA^n)$ of the form
\begin{align*}
\sigma&=\sigma_{i,\lambda,f} \colon (x_1, \ldots, x_{i-1},x_i, x_{i+1},\ldots,x_n)
\\
&\longmapsto (x_1,\ldots, x_{i-1}, \lambda x_i+f,x_{i+1}\ldots, x_n),
\end{align*}
where $f\in \KK[x_1, \ldots, x_{i-1}, x_{i+1}\ldots,x_n]$ and $\lambda\in\GG_m$. An elementary automorphism~$\sigma$ with $\lambda=1$ is contained in the $\GG_a$-subgroup $\{\sigma_{i,1,tf}\}_{t\in\KK}$. Since for any $\lambda\in\GG_m$ the conjugated element $h_\lambda\circ\sigma\circ h_\lambda^{-1}$, where $h_\lambda(x)=\lambda x$, is again elementary, any tame automorphism $\beta$ with Jacobi matrix having determinant~$1$ can be written as a product of elementary automorphisms with Jacobi matrix having determinant~$1$. Thus ${\Tame_n\cap J_n\subseteq\SAut (\AA^n)\subseteq J_n.}$

For $n=3$ the first inclusion is strict. In fact, the Shestakov-Umirbaev Theorem states that the Nagata automorphism $H_f(1)$ is wild or, in other words, 
$$
H_f(1)\in \SAut (\AA^3) \setminus (\Tame_3\cap J_3).
$$

For $n=2$, the Jung--van der Kulk Theorem states that
$$
\Aut(\AA^2)=\Tame_2
$$
and so $\SAut (\AA^2)=J_2$. It is also known that the group $J_2$ is perfect and its derived subgroup coincides with the derived subgroup of the group $\Aut(\AA^2)$. The question of whether the equality $\SAut(\AA^n)=J_n$ holds for $n\ge 3$ remains open, see ~\cite[Question~15.5.4]{FK}. Moreover, it is not known whether the group $J_n$ coincides with the closure of the subgroup $\SAut(\AA^n)$ with respect to the ind-topology on the group $\Aut(\AA^n)$.
\end{example}

\subsection{Flexibility and infinite transitivity} Let $X$ be an affine algebraic variety and ${G\subseteq\SAut(X)}$ be a subgroup generated by a set $\cN$ of locally nilpotent vector fields satisfying the following conditions:
\begin{enumerate}
\item$\cN$ is invariant under the action of the group $G$ by conjugation;
\item$\cN$ is closed under taking replicas.
\end{enumerate} 

We call such a generating set {\em saturated}. Note that a more essential condition here is condition~(b), since starting from an arbitrary set of locally nilpotent vector fields $\cN_0$ generating $G$ and satisfying condition~(b) we can add all conjugates by elements of $G$ to elements of $\cN_0$ and obtain a saturated set $\cN$ generating the same group $G$.

The following theorem proves (i)$\Rightarrow$(ii) in Theorem~\ref{tmain} from the Introduction.

\begin{theorem} [{\cite[Theorem~2.2]{AFKKZ-1}}] \label{ccaa}
Let $X$ be an affine variety of dimension $\ge 2$ and ${G\subseteq\SAut(X)}$ be a subgroup generated by a saturated set of locally nilpotent vector fields. Suppose that $G$ acts with the open orbit $O\subseteq X$. Then $G$ acts on $O$ infinitely transitively.
\end{theorem}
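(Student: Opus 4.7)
The plan is to prove $m$-transitivity of $G$ on $O$ by induction on $m$, with base case $m=1$ given by hypothesis. For the inductive step, a standard reduction brings the problem to the following form: given tuples $(a_1,\ldots,a_{m+1})$ and $(b_1,\ldots,b_{m+1})$ of pairwise distinct points of $O$, use $m$-transitivity to send $a_i$ to $b_i$ for $i\le m$, and then it only remains to move the image of $a_{m+1}$ to $b_{m+1}$ by an element of $G$ fixing $P:=\{b_1,\ldots,b_m\}$ pointwise. Hence it suffices to show that the pointwise stabilizer $G_P$ acts transitively on $O\setminus P$; since $\dim X\ge 2$, the open set $O\setminus P$ is irreducible and hence connected, so it is enough to prove that the $G_P$-orbit of every $x\in O\setminus P$ is Zariski open in $O\setminus P$.

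The crux is the following infinitesimal statement: for every finite $P\subset O$ and every $x\in O\setminus P$, the subspace of $T_xO$ spanned by those $\partial(x)$ with $\partial\in\cN$ and $\partial(p)=0$ for all $p\in P$ coincides with $T_xO$. Once this is in hand, the subgroup $G_P'\subseteq G_P$ generated by the associated $\GG_a$-subgroups is algebraically generated, the infinitesimal condition says exactly that $x$ is $G_P'$-flexible, and the criterion following Proposition~\ref{p18} then implies that the $G_P'$-orbit of $x$ is open in $O$, a fortiori in $O\setminus P$. Connectedness of the latter forces this orbit, and hence the $G_P$-orbit of $x$, to equal $O\setminus P$, closing the induction.

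To establish the infinitesimal statement, apply Corollary~\ref{c121} to $G$ to obtain vector fields $\partial_1,\ldots,\partial_s\in\cN$ spanning $T_yO$ at every $y\in O$. For each $i$ the task is to produce a replica $f_i\partial_i$, with $f_i\in\ker\partial_i$, satisfying $f_i|_P=0$ and $f_i(x)\ne 0$. By saturation $f_i\partial_i$ lies in $\cN$, it fixes $P$ pointwise since $f_i$ vanishes there, and its value at $x$ equals $f_i(x)\partial_i(x)$, so the family $\{f_i\partial_i\}$ still spans $T_xO$. Such an $f_i$ is easily produced as soon as $x$ and the points of $P$ lie in pairwise distinct $\partial_i$-orbits: a suitable product of elementary invariants of the form $h-h(p_j)$, with $h\in\ker\partial_i$ separating $x$ from $p_j$, does the job.

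The main obstacle is that some $p_j$ may actually lie on the $\partial_i$-orbit through $x$, in which case no element of $\ker\partial_i$ can separate them. To circumvent this one replaces $\partial_i$ by a conjugate $g_*\partial_i$ --- still in $\cN$ by saturation --- for a suitably chosen $g\in G$, so that the one-dimensional orbit curve of the new vector field through $x$ avoids the finite set $P$. The existence of such $g$ follows from the Kleiman-type transversality in Theorem~\ref{ccgg}, since the dimension count $1+0<\dim X$ leaves enough room for a generic translate of the curve to miss $P$. Coordinating these corrections for all $i=1,\ldots,s$ simultaneously, while preserving the spanning property at $x$, is the technical heart of the argument.
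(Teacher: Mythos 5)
Your overall strategy is the same as the one behind the paper's proof: you reduce infinite transitivity to the statement that the pointwise stabilizer of a finite set $P\subseteq O$ acts transitively on $O\setminus P$ (this is exactly Theorem~\ref{ccbb}), and you attack that statement by producing, for each generator $\partial_i$, a replica $f_i\partial_i\in\cN$ with $f_i|_P=0$ and $f_i(x)\ne 0$, so that the fields vanishing on $P$ still span $T_xX$. The skeleton (connectedness of $O\setminus P$, openness of the stabilizer orbits via the flexibility criterion after Proposition~\ref{p18}) is sound.

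The genuine gap is in the crux. You assert that a suitable $f_i$ exists ``as soon as $x$ and the points of $P$ lie in pairwise distinct $\partial_i$-orbits,'' because one can take products of invariants $h-h(p_j)$ with $h\in\ker\partial_i$ separating $x$ from $p_j$. But lying on distinct orbits does \emph{not} guarantee that such an $h$ exists: regular invariants of a $\GG_a$-action need not separate disjoint orbits. For instance, for $\partial=x_1\tfrac{\partial}{\partial x_2}+x_2\tfrac{\partial}{\partial x_3}$ on $\AA^3$ one has $\ker\partial=\KK[x_1,\,x_2^2-2x_1x_3]$, and the points $(0,1,0)$ and $(0,-1,0)$ lie on distinct one-dimensional orbits on which every invariant takes the same value; no $f\in\ker\partial$ can vanish at one and not the other. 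Consequently the obstruction you isolate (``$p_j$ lies on the $\partial_i$-orbit through $x$'') is not the right one, and your proposed repair --- conjugating $\partial_i$ by $g\in G$ so that the orbit curve through $x$ merely \emph{avoids} $P$ --- does not remove the actual difficulty, since avoidance of $P$ by the orbit is strictly weaker than separation of $x$ from $P$ by $\ker(g_*\partial_i)$. This is precisely the point where \cite[Definition~2.6]{AFKKZ-1} introduces the \emph{orbit separation property} and where the bulk of the technical work of the original proof is spent: one must show that separation by invariants can be achieved after moving the configuration into general position, and one must do this for all $i$ simultaneously while keeping the spanning property at $x$ --- which is itself delicate, because $(g_*\partial_i)(x)=dg\bigl(\partial_i(g^{-1}x)\bigr)$ is not $\partial_i(x)$, so spanning is not preserved under conjugation for free. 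As written, the ``technical heart'' you defer is exactly the content of the theorem, and the one concrete claim you make in its place is false.
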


For a subset $Z\subseteq X$, we denote by $\cN_Z$ the set of vector fields from $\cN$ that vanish on $Z$. Let
$$
G_{\cN,Z}=\langle H=\exp(\KK\partial): \partial\in \cN_Z \rangle
$$
be the subgroup in $G$ generated by exponents of all elements in $\cN_Z$. It is clear that automorphisms from $G_{\cN,Z}$ preserve the subset $Z$ pointwise. It is easy to see that
$\cN_Z$ is a saturated generating set of the group ~$G_{\cN,Z}$.

One of the main technical results of the work~\cite{AFKKZ-1} can be formulated as follows.

\begin{theorem} [{\cite[Theorem~2.5]{AFKKZ-1}}] \label{ccbb}
Let $X$ be an affine variety of dimension $\ge 2$ and ${G\subseteq\SAut(X)}$ be a subgroup generated by a saturated set $N$ of locally nilpotent vector fields. Suppose that $G$ acts with the open orbit $O\subseteq X$. Then for each finite subset $Z\subseteq O$ the group $G_{N,Z}$ acts transitively on $O\setminus Z$.
\end{theorem}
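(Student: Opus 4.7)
My plan is to reduce the theorem to an infinitesimal spanning statement and then establish that statement via the replica construction combined with a conjugation perturbation. First I would verify that $\cN_Z$ is itself a saturated generating set for $G_{\cN,Z}$: it is closed under replicas, since $(f\partial)(z)=f(z)\partial(z)=0$ whenever $\partial|_Z=0$, and closed under $G_{\cN,Z}$-conjugation, since elements of $G_{\cN,Z}$ fix $Z$ pointwise. Hence Corollary~\ref{c121} applied to $G_{\cN,Z}$ reduces the theorem to the assertion that, for every $x_0\in O\setminus Z$, the tangent vectors $\{\partial(x_0):\partial\in\cN_Z\}$ span $T_{x_0}O$. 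Granted this, each $G_{\cN,Z}$-orbit inside $O\setminus Z$ is open; since $O$ is irreducible (being the open $G$-orbit in the irreducible variety $X$) and $Z$ is finite, $O\setminus Z$ is irreducible, hence a single orbit.

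The spanning statement would be proved by contradiction. Suppose $V_{x_0}:=\Span\{\partial(x_0):\partial\in\cN_Z\}\subsetneq T_{x_0}O$ and pick $\ell\in T_{x_0}^*O$ with $\ell|_{V_{x_0}}=0$ and $\ell\ne 0$. By Corollary~\ref{c121} applied to $G$, some $\partial_0\in\cN$ satisfies $\ell(\partial_0(x_0))\ne 0$; set $H_0=\exp(\KK\partial_0)$. In the favorable subcase $x_0\notin\overline{H_0.z}$ for every $z\in Z$, the algebra of $H_0$-invariants separates $x_0$ from $Z$, so there exists $f\in\ker\partial_0$ with $f|_Z=0$ and $f(x_0)\ne 0$. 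Saturation gives $f\partial_0\in\cN_Z$, and $\ell((f\partial_0)(x_0))=f(x_0)\ell(\partial_0(x_0))\ne 0$, contradicting $\ell|_{V_{x_0}}=0$. In the degenerate subcase $x_0\in\overline{H_0.z}$ for some $z\in Z$, I would conjugate $\partial_0$ by a flow $\exp(s\partial^*)$, where $\partial^*\in\cN$ is chosen so that $\partial^*(x_0)$ is not tangent at $x_0$ to the (at most) $1$-dimensional set $\overline{H_0.Z}$; such $\partial^*$ exists by Corollary~\ref{c121} because $\dim O\ge 2$. The conjugate $\partial_0^{(s)}:=\mathrm{Ad}(\exp(s\partial^*))\partial_0$ lies in $\cN$ by saturation, the translate $\overline{H_0^{(s)}.Z}$ varies algebraically with $s$ and misses $x_0$ for generic $s$, and the function $s\mapsto\ell(\partial_0^{(s)}(x_0))$, being algebraic in $s$ and nonzero at $s=0$, remains nonzero for generic $s$. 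Applying the favorable subcase now to $\partial_0^{(s)}$ yields the contradiction. Theorem~\ref{ccgg} can be invoked to organize this genericity argument more cleanly, by putting the $1$-dimensional scheme $\overline{H_0.Z}$ and the point $x_0$ into general position via a single translate.

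The main obstacle is this conjugation step: one must perturb $\partial_0$ to escape the incidence $x_0\in\overline{H_0.Z}$ while keeping $\ell(\partial_0^{(s)}(x_0))\ne 0$. Here the full strength of the saturated hypothesis — closure under \emph{both} $G$-conjugation and replicas — is essential: conjugation supplies enough candidates $\partial^*$ to move the bad orbit closure off $x_0$ in any chosen transverse direction, while replicas are what turn the resulting $\partial_0^{(s)}$ into an element of $\cN_Z$ detecting the functional $\ell$.
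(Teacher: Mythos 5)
Your overall architecture is the right one and matches the strategy of \cite{AFKKZ-1}: reduce, via Corollary~\ref{c121} applied to $G_{\cN,Z}$, to the claim that the vectors $\partial(x_0)$, $\partial\in\cN_Z$, span $T_{x_0}X$ for every $x_0\in O\setminus Z$, and produce the required fields as replicas $f\partial_0$ with $f\in\ker\partial_0$, $f|_Z=0$, $f(x_0)\neq 0$. The gap is in the pivotal claim of your ``favorable subcase'': the condition $x_0\notin\overline{H_0.z}$ for all $z\in Z$ does \emph{not} imply that $\ker\partial_0$ contains a function vanishing on $Z$ and nonzero at $x_0$. Invariants of $H_0$ separate $x_0$ from $z$ only when the two points lie in different fibres of the quotient map $X\to\Spec(\ker\partial_0)$, and these fibres can be strictly larger than orbit closures. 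A counterexample is already implicit in Example~\ref{affspa}: for $\partial_0=x_1\frac{\partial}{\partial x_2}+x_2\frac{\partial}{\partial x_3}$ on $\AA^3$ one has $\ker\partial_0=\KK[x_1,\,x_2^2-2x_1x_3]$; the points $x_0=(0,1,0)$ and $z=(0,-1,0)$ lie on disjoint closed one-dimensional $H_0$-orbits and satisfy $\partial_0(x_0)\neq 0$, yet every invariant takes the same value at $x_0$ and at $z$, so no replica of $H_0$ fixes $z$ while moving $x_0$. Nothing prevents $\cN$ from offering you exactly such a $\partial_0$ as the field with $\ell(\partial_0(x_0))\neq 0$, in which case your argument enters the favorable branch and fails there.

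For the same reason, the conjugation step in your degenerate subcase is aimed at the wrong target: arranging $x_0\notin\overline{H_0^{(s)}.z}$ for generic $s$ does not restore separation by invariants, which is what the construction of $f$ actually requires. What has to be shown is that, after perturbing $\partial_0$ by conjugation (or replacing it by another element of $\cN$ on which $\ell$ is nonzero), the point $x_0$ and all points of $Z$ land in pairwise distinct level sets of $\ker\partial_0^{(s)}$ simultaneously. This is precisely the orbit separation property of \cite[Definition~2.6]{AFKKZ-1}, and proving that it can always be achieved is the technically heavy core of the proof of Theorem~2.5 there, as indicated after the statement above. So your reduction and replica mechanism are sound, but the genericity statement you actually need is strictly stronger than the one you establish, and closing that gap is essentially the whole difficulty of the theorem.
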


Theorem~\ref{ccbb} implies Theorem~\ref{ccaa}. In turn, the proof of Theorem~\ref{ccbb} is based on the orbit separation property~\cite[Definition~2.6]{AFKKZ-1}. This proof requires considerable technical efforts.

\smallskip

Finally, let us mention one more result on this subject, which is of independent interest.

\begin{theorem} [{\cite[Theorem~2.15]{AFKKZ-1}}]
Let $X$ be an affine variety of dimension $\ge 2$. Suppose that a group $G\subseteq\SAut(X)$ generated by a saturated set $N$ of locally nilpotent vector fields acts on $X$ with an open orbit $O$ and $Y=X\setminus O$. Then the group $G_{N,Y}$ acts on $O$ infinitely transitively.
\end{theorem}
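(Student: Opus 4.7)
The plan is to apply Theorem~\ref{ccaa} to the subgroup $G_{N,Y}$ with the natural generating set $N_Y:=\{\partial\in N:\partial|_Y\equiv 0\}$. I would first check that $N_Y$ is saturated. Closure under replicas is immediate: if $\partial\in N_Y$ and $f\in\ker\partial$, then $f\partial\in N$ by saturation of $N$ and $f\partial$ vanishes on $Y$ since $\partial$ does, so $f\partial\in N_Y$. Conjugation invariance under $G_{N,Y}$ is clear since each $g\in G_{N,Y}$ fixes $Y$ pointwise. Since $Y=X\setminus O$ is $G$-invariant, the same argument in fact shows that $N_Y$ is stable under $G$-conjugation, so that $G_{N,Y}$ is a normal subgroup of $G$.

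The main step is to show that $G_{N,Y}$ acts on $X$ with open orbit equal to $O$. For this I would prove the key lemma that $\ker\partial\cap I_Y\ne 0$ for every $\partial\in N$, where $I_Y$ denotes the radical ideal of $Y$. Indeed, $G$-invariance of $Y$ gives $\partial(I_Y)\subseteq I_Y$; choosing any nonzero $f\in I_Y$ and taking the largest $n$ with $\partial^n f\ne 0$ (which exists by local nilpotency of $\partial$), the element $\partial^n f\in\ker\partial\cap I_Y$ is nonzero. Consequently $W_\partial:=X\setminus V(\ker\partial\cap I_Y)$ is open and dense in $X$. By Proposition~\ref{p18} one finds finitely many $\partial_1,\dots,\partial_s\in N$ whose values span $T_xX$ at every $x\in O$; on the nonempty open subset $U:=O\cap W_{\partial_1}\cap\cdots\cap W_{\partial_s}$ one picks $f_i\in\ker\partial_i\cap I_Y$ with $f_i(x)\ne 0$, so that the replicas $f_i\partial_i\in N_Y$ have values $f_i(x)\partial_i(x)$ at $x$ spanning $T_xX$. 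Hence $G_{N,Y}.x$ is open in $X$ for every $x\in U$. Combining this with the normality of $G_{N,Y}$ in $G$ and the transitivity of $G$ on $O$, the $G$-action transitively permutes the $G_{N,Y}$-orbits in $O$; so all of them have dimension $\dim X$ and are open in $O$, and connectedness of the irreducible open set $O$ forces a unique $G_{N,Y}$-orbit, which must be $O$ itself.

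Applying Theorem~\ref{ccaa} to $(G_{N,Y},N_Y)$ with open orbit $O$ now yields infinite transitivity on $O$, completing the proof. The most delicate point is the nontriviality of $\ker\partial\cap I_Y$, which crucially combines local nilpotency of $\partial$ with $G$-invariance of $Y$; this lemma supplies the replicas in $N_Y$ needed to span the tangent spaces at all points of $O$, and is what ultimately allows $G_{N,Y}$ to attain infinite transitivity despite fixing $Y$ pointwise.
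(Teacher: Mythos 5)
The survey states this theorem without proof, simply citing \cite[Theorem~2.15]{AFKKZ-1}, so there is no in-paper argument to compare against; judged on its own, your proof is correct and follows the natural route (which is also that of the original reference): reduce to Theorem~\ref{ccaa} by showing that $N_Y$ is saturated and that $G_{N,Y}$ still has $O$ as its open orbit. Your key lemma that $\ker\partial\cap I_Y\neq 0$ is sound --- $G$-invariance of $Y$ gives $\exp(t\partial)^*I_Y\subseteq I_Y$ for all $t$, hence $\partial(I_Y)\subseteq I_Y$ by looking at coefficients of the polynomial $t\mapsto\exp(t\partial)^*f$, and the top nonvanishing power $\partial^nf$ lands in $\ker\partial\cap I_Y$ --- and the passage from ``open $G_{N,Y}$-orbits at points of a dense open $U\subseteq O$'' to ``$O$ is a single $G_{N,Y}$-orbit'' via normality of $G_{N,Y}$ in $G$, transitivity of $G$ on $O$, and connectedness of $O$ is exactly the right way to close the argument.
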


\section{Properties of flexible varieties}
\label{sec-2}

In this section we consider several algebraic and geometric results related to flexibility of an algebraic variety.

\subsection{The Makar-Limanov invariant and rationality properties} Recall~\cite{Fr} that the \emph{Makar-Limanov invariant} $\text{ML}(X)$ of an affine algebraic variety $X$ is the intersection of kernels of locally nilpotent derivations on $\KK[X]$. In other words, $\text{ML}(X)$ is the subalgebra of $\SAut(X)$-invariant regular functions on $X$.

In~\cite{Li-2}, a close concept is defined: the \emph{Makar-Limanov field invariant} $\text{FML}(X)$ is the intersection of kernels of derivations obtained by extension of locally nilpotent derivations of the algebra $\KK[X]$ to the field of rational functions $\KK(X)$. This is a subfield of the field $\KK(X)$ consisting of rational functions invariant with respect to the group $\SAut(X)$. If this invariant is trivial, i.e. $\text{FML}(X)=\KK$, then the invariant $\text{ML}(X)$ is also trivial, but the converse statement is generally not true. The triviality of $\text{FML}(X)$ is equivalent to the existence of an open $\SAut(X)$-orbit in $X$ (Corollary~\ref{ccdd}) or, equivalently, the existence of a flexible point on~$X$.

It is natural to assume that the triviality of such invariants should be connected with one or another version of rationality properties of the variety $X$. Recall that an irreducible algebraic variety $X$ is \emph{rational} if the field of rational functions $\KK(X)$ is generated over the field $\KK$ by a finite set of algebraically independent elements. Equivalently, there is an open subset in $X$ isomorphic to an open subset of an affine space. An irreducible variety $X$ is \emph{stably rational} if the variety $X\times\AA^k$ is rational for some positive integer $k$. Finally, an irreducible variety $X$ is \emph{unirational} if the field of rational functions $\KK(X)$ can be embedded as a subfield in the field of rational fractions $\KK(x_1,\ldots,x_m)$. Geometrically, this means that there exists a rational dominant morphism to $X$ from an affine space. It is known that if $X$ is a curve or a surface then the rationality of $X$ is equivalent to unirationality. At the same time, there are unirational but not rational varieties of dimension $3$.

Examples of non-unirational three-dimensional affine varieties $X$ with $\text{ML}(X)=\KK$ are known; in these examples, the varieties are birationally equivalent to the direct product $C\times\AA^2$, where $C$ is a smooth curve of genus $g\ge 1$; see~\cite[Example~4.2]{Li-1}. For such varieties, typical $\SAut(X)$-orbits have dimension two, that is, the field invariant 
$\text{FML}(X)$ is nontrivial.

The following proposition confirms, in particular, Conjecture~5.3 from~\cite{Li-2}.

\begin{proposition} [{\cite[Proposition~5.1]{AFKKZ-1}}]
Let $X$ be an irreducible affine variety. If ${\text{FML}(X)=\KK,}$ then the variety $X$ is unirational.
\end{proposition}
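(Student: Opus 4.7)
The plan is to construct a dominant morphism from an affine space $\AA^s$ to $X$, since passing to function fields then yields an embedding $\KK(X) \hookrightarrow \KK(t_1, \ldots, t_s)$, which is exactly the definition of unirationality.

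First I would unpack the hypothesis. Applying Corollary~\ref{ccdd} to the algebraically generated group $G = \SAut(X) \subseteq \Aut(X)$, the equality $\text{FML}(X) = \KK$ translates into $\KK(X)^{\SAut(X)} = \KK$, which by that corollary forces the existence of an open $\SAut(X)$-orbit $O \subseteq X$. Fix any point $x_0 \in O$; we only need a dominant map from some $\AA^s$ onto $O$, since $O$ is dense in $X$.

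Next I would apply Proposition~\ref{ccff}(b) to $G = \SAut(X)$ with the generating family $S$ consisting of all $\GG_a$-subgroups of $\Aut(X)$: there exist finitely many (not necessarily distinct) subgroups $H_1, \ldots, H_s \in S$ such that
$$\SAut(X).x_0 \;=\; H_1.\bigl(H_2.(\cdots(H_s.x_0))\bigr).$$
Since each $H_i$ is isomorphic to $\GG_a \cong \AA^1$ as an algebraic variety, the orbit-product map
$$\phi \colon \AA^s \;\cong\; H_1 \times H_2 \times \cdots \times H_s \;\longrightarrow\; X, \qquad (h_1, \ldots, h_s) \;\longmapsto\; (h_1 h_2 \cdots h_s).x_0$$
is a morphism of algebraic varieties whose image is precisely the open orbit $O$. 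In particular, $\phi$ is dominant, so $\phi^{*}$ embeds $\KK(X)$ into $\KK(\AA^s) = \KK(t_1, \ldots, t_s)$, completing the proof.

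Regarding obstacles: the entire nontrivial content is packed into Proposition~\ref{ccff}(b) and Corollary~\ref{ccdd}, both of which are already available by the point in the paper where this proposition appears. Given those, I do not expect any real difficulty; the field-invariant hypothesis is used only to supply the open orbit, and the factorization of orbits into a finite chain of $\GG_a$-actions upgrades this open orbit to the image of an explicit affine-space parametrization. A potential subtlety is merely notational: one should remark that $\phi$ really is a morphism (which follows because each action map $H_i \times X \to X$ is a morphism by the very definition of an algebraic subgroup of $\Aut(X)$).
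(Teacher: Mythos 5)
Your proposal is correct and follows essentially the same route as the paper: Corollary~\ref{ccdd} converts $\text{FML}(X)=\KK$ into an open $\SAut(X)$-orbit, Proposition~\ref{ccff}(b) exhibits that orbit as the image of a morphism from $H_1\times\cdots\times H_s\cong\AA^s$, and dominance of this map gives the embedding $\KK(X)\hookrightarrow\KK(t_1,\ldots,t_s)$. No substantive differences from the paper's argument.
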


The proof follows from the observation that the condition $\text{FML}(X)=\KK$ means that there is an open $\SAut(X)$-orbit $O$ in $X$. Proposition~\ref{ccff} guarantees that there are $\GG_a$-subgroups $H_1,\ldots,H_s$ in $\SAut(X)$ and a point $x\in X$ such that the image of the map
$$
H_1\times\ldots\times H_s \to X, \quad (h_1,\ldots,h_s) \mapsto (h_1\ldots h_s). x
$$
equals $O$. Since the product $H_1\times\ldots\times H_s$ is isomorphic (as a variety) to the affine space~$\AA^s$, this implies unirationality of $X$. Moreover, every two points on $O$ 
are connected by the image of a morphism $\AA^1\to O$. In other words, the orbit $O$ is \emph{$\AA^1$-connected}.

In general, flexibility entails neither rationality nor stable rationality. For example, Popov~\cite[Example~1.22]{Po2} pointed out examples of finite subgroups $F\subseteq\SL_n$ for some $n$ such that the smooth flexible unirational affine homogeneous space $X=\SL_n/F$ is not stably rational.

\smallskip

The paper~\cite{BKK} is devoted to a conjecture that characterizes unirationality in terms of infinitely transitive actions. To formulate the conjecture, we need several definitions.

We say that an irreducible variety $X$ is \emph{stably birationally infinitely transitive} if for some positive integer $m$ the purely transcendental extension $\KK(X)(y_1,\ldots,y_m)$ of the field $\KK(X)$ is the field of rational functions on some flexible affine variety of dimension $\ge 2$. If this condition holds for $m=0$, the variety $X$ is called \emph{birationally infinitely transitive}.

It is clear that the affine space $\AA^n$ is stably birationally infinitely transitive, and for $n\ge 2$ it is birationally infinitely transitive. Thus, the same properties hold for any rational algebraic variety.

\begin{conjecture} [{\cite[Conjecture~1.4]{BKK}}] \label{cbkk}
Any unirational variety is stably birationally infinitely transitive. 
\end{conjecture}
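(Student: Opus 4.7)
The plan is to construct, for a given unirational variety $X$, an integer $m\ge 1$ and a flexible affine variety $Y$ of dimension $\ge 2$ with $\KK(Y)\cong\KK(X)(y_1,\ldots,y_m)$. Unirationality supplies a dominant rational map $\phi\colon\AA^N\dashrightarrow X_0$ for some affine model $X_0$ of $X$, equivalently an inclusion $\KK(X_0)\subseteq\KK(x_1,\ldots,x_N)$. The first natural candidate is $Y=X_0\times\AA^m$: its function field is automatically $\KK(X_0)(y_1,\ldots,y_m)$, so the entire problem reduces to establishing flexibility of $Y$. The coordinate translations on the $\AA^m$-factor already supply $m$ commuting $\GG_a$-subgroups whose orbit tangents at a smooth point $(x,y)$ span the $\AA^m$-direction of $T_{(x,y)}Y$, so the remaining task is to produce $\GG_a$-subgroups on $Y$ whose orbit tangents span the $T_xX_0$-direction.

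The key idea is to transport the translational derivations $\partial/\partial x_i$ of $\AA^N$ to $Y$ through $\phi$. Via the inclusion $\KK(X_0)\subseteq\KK(\AA^N)$ each $\partial/\partial x_i$ restricts to a rational derivation of $\KK(X_0)$ and therefore to a rational vector field on $Y$. Exploiting the freedom to enlarge $m$ and to form replicas by multiplying by invariants pulled back from the $\AA^m$-coordinates (as in the discussion of $\GG_a$-replicas in Section~\ref{1.3}), one tries to clear denominators and obtain honest locally nilpotent derivations of $\KK[Y]$. Saturating this family under conjugation and under further replica formation, and applying Corollary~\ref{tr-fl} together with Theorem~\ref{ccaa}, would yield both flexibility of $Y$ and the infinite transitivity of $\SAut(Y)$ on its open orbit. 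One then invokes Theorem~\ref{tmain} to conclude that $Y$ is indeed flexible.

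The main obstacle, and the reason the conjecture remains open, is precisely the pole-clearing step. The rational derivations induced by $\phi$ carry poles along the indeterminacy locus of $\phi$ and along ramification divisors of any finite map factoring it, and there is no general mechanism to simultaneously regularize enough of them on a single affine model. Every known class for which stable birational infinite transitivity has been verified---affine cones over flag or Fano varieties, toric and horospherical varieties, suspensions over flexible bases, universal torsors over varieties admitting an affine-space covering---relies on explicit combinatorial or geometric data controlling this obstruction, which is unavailable in the general unirational setting. In full generality it may be necessary to abandon the simple product $X_0\times\AA^m$ in favour of a more elaborate model, such as a total coordinate space over a well-chosen birational modification of $X$, and to show that flexibility descends through a $\GG_m^k$-quotient to an intermediate variety whose function field is $\KK(X)(y_1,\ldots,y_m)$. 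Identifying this intermediate model and proving that flexibility descends appropriately appears to require essentially new input beyond the techniques developed so far.
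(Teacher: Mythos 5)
There is a genuine gap here, but it is one you have identified yourself: the statement is an open conjecture (Conjecture~1.4 of~\cite{BKK}), the paper offers no proof of it, and your proposal does not close it either. The paper only records that \cite[Theorems~2.1--2.2]{BKK} give \emph{sufficient} conditions for stable birational infinite transitivity, which settle the conjecture for certain classes of varieties, while the general case remains open. So the honest verdict is that your text is a strategy sketch with an acknowledged missing step, not a proof, and no comparison with a proof in the paper is possible.

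That said, your diagnosis of where the difficulty lies is accurate and consistent with the surrounding discussion. The natural candidate model $Y=X_0\times\AA^m$ does reduce everything to flexibility of a cylinder, and the translations along the $\AA^m$-factor do give part of the required tangent directions; the irreducible obstruction is exactly the regularization of the rational vector fields obtained by transporting $\partial/\partial x_i$ through the unirationality map $\AA^N\dashrightarrow X_0$, whose poles along the indeterminacy and ramification loci cannot in general be cleared by replicas in the $\AA^m$-coordinates. Two remarks to sharpen your sketch. First, the requirement $m\ge 1$ is not merely a convenience: \cite[Example~1.6]{BKK}, quoted in the paper, shows that a unirational threefold admitting an infinitely transitive birational model with $m=0$ is automatically rational, so stability is essential and any correct argument must genuinely use the extra factors. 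Second, the cylinder phenomenon you hope for is not vacuous --- Dubouloz's example of a rigid surface $S$ with $S\times\AA^2$ generically flexible, cited in the paper, shows that passing to a cylinder can create flexibility out of nothing --- but no general mechanism of this kind is known, and that is precisely why the conjecture is still open.
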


In~\cite[Example~1.6]{BKK} it is shown that the stability requirement is essential for the conjecture. In fact, the authors show that any unirational three-dimensional variety admitting an infinitely transitive birational model is rational. Indeed, for a typical $\GG_a$-orbit $C$ on a given model, this model is birationally isomorphic to $C\times Y$ for some surface $Y$. By assumptions, the surface $Y$ is unirational, and therefore rational. At the same time, there are examples of unirational but not rational three-dimensional varieties.

\smallskip

In ~\cite[Theorems~2.1--2.2]{BKK}, sufficient conditions for stable birational infinite transitivity of a variety are given. These conditions allow to prove Conjecture~\ref{cbkk} for wide classes of varieties, see~\cite[Section~3]{BKK}.


\subsection{Collective transitivity} By \emph{\it collective infinite transitivity} we mean the possibility to translate simultaneously, that is, by the same automorphism, arbitrary finite sets of points to arbitrary finite sets of points of the same cardinality in the same orbits. The general results on collective infinite transitivity obtained in~\cite[Section~3]{AFKKZ-1} are illustrated here by examples from matrix algebra, see~\cite{Re-1,Re-2}.

Let $X=\Mat(n,m)$ be the space of matrices of size $n\times m$ over the ground field ~$\KK$. It is well known that a locally closed subset $X_r\subseteq X$ of matrices of rank $r$ has dimension $mn-(m-r)(n-r)$. Further we assume that this dimension is at least~$2$. The group $\SL_n\times\SL_m$ acts on $X$ by means of left/right multiplications and preserves the subsets~$X_r$. For any $k\ne l$, let $E_{kl}$ and $E^{kl}$ be matrix units, that is, $x_{kl}=1$ and all other matrix elements are zero. Let $H_{kl}=I_n+\KK E_{kl}\subseteq\SL_n$ and $H^{kl}=I_m+\KK E^{kl}\subseteq\SL_m$ denote the corresponding $\GG_a$-subgroups. They preserve the stratification $X=\bigcup_r X_r$. Let $\delta_{kl}$ and $\delta^{kl}$ be the corresponding locally nilpotent vector fields on $X$. These fields are tangent to the elements of the stratification.

Let us say that the $\GG_a$-subgroups $H_{kl}$, $H^{kl}$ and their replicas are \emph{elementary} $\GG_a$-subgroups. Our goal is to establish collective infinite transitivity on strata for the subgroup $G$ of the group $\SAut(X)$ generated by such two-sided elementary subgroups.

We know from linear algebra that the subgroup $\SL_n\times\SL_m\subseteq G$ acts transitively on every subvariety $X_r$ with the exception of the open subvariety $X_n$ in the case $m=n$. In the latter case, $G$-orbits lying in $X_n$ are the level subvarieties of the determinant.

\begin{theorem} [{\cite[Theorem~3.3]{AFKKZ-1}}]
Let us consider two finite ordered sets $S_1$ and $S_2$ of the same cardinality, consisting of such matrices from $\Mat(n,m)$ that the sequences of ranks of these matrices coincide. In the case $m=n$, we additionally require a matching of the sequences of determinants. Then there is an element of the group $G$ whose diagonal action translates $S_1$ to $S_2$.
\end{theorem}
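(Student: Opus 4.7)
My approach would reduce the statement to the collective infinite transitivity machinery developed in \cite[Section~3]{AFKKZ-1}, of which this theorem is an application, combined with the tools on saturated families from Section~\ref{sec-1}.

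First I would identify the $G$-orbits on $X=\Mat(n,m)$. Since the non-replica subgroups $H_{kl}$ and $H^{kl}$ generate $\SL_n\times\SL_m$, this subgroup is contained in $G$, and it already acts transitively on each stratum $X_r$ with the usual exception that, when $m=n$, the open stratum $X_n$ breaks into the level subvarieties of the determinant. Conversely, every elementary $\GG_a$-subgroup, and any of its replicas, acts by adding a polynomial-valued multiple of one row to another (or of one column to another); such a transformation preserves the rank and, when $m=n$, the determinant. Consequently the $G$-orbits on $X$ coincide with the $\SL_n\times\SL_m$-orbits, and the hypotheses on $S_1$ and $S_2$ are exactly the statement that corresponding elements lie in a common $G$-orbit.

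Next I would set up the relevant saturated family. Let $\cN$ consist of the fields $\delta_{kl}$, $\delta^{kl}$ together with all their replicas $f\delta_{kl}$, $f\delta^{kl}$, enlarged by all $G$-conjugates; then $\cN$ is saturated and $\langle\cN\rangle=G$. Writing $S_1=(A_1,\ldots,A_s)$ and $S_2=(B_1,\ldots,B_s)$, the plan is an induction on $s$: having produced $g\in G$ with $g.A_i=B_i$ for $i<j$, I would invoke Theorem~\ref{ccbb} with $Z=\{B_1,\ldots,B_{j-1}\}$ to obtain an element $h\in G_{\cN,Z}$ that sends $g.A_j$ to $B_j$, which is possible because $g.A_j$ and $B_j$ lie in a common $G$-orbit; then $hg$ advances the induction.

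The main obstacle is that Theorem~\ref{ccbb} is stated for the \emph{open} $G$-orbit in an affine variety, while here the orbits $X_r$ for $r<\min(n,m)$, and the determinant level subvarieties when $m=n$, are in general neither open in $X$ nor affine. The remedy is to use the \emph{proof} of Theorem~\ref{ccbb} rather than its statement: its essential input is the orbit-separation property of \cite[Definition~2.6]{AFKKZ-1} for $\cN_Z$ restricted to the relevant orbit, and in the matrix case this property can be verified by hand in the spirit of the opening Proposition of Section~\ref{sec-1}. Indeed, a replica $f\delta_{kl}$ effects a row operation whose coefficient $f\in\ker\delta_{kl}$ is an essentially arbitrary polynomial in the entries outside row $k$, and standard interpolation produces such a polynomial vanishing on any prescribed finite set of matrices while taking any preassigned value at one additional matrix; these interpolating coefficients furnish the locally nilpotent fields in $\cN_Z$ required to separate the points of $O\setminus Z$ inside each orbit $O$, and this closes the induction.
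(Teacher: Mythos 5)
Your overall route is the intended one: the survey presents this theorem precisely as an application of the general collective-transitivity results of \cite[Section~3]{AFKKZ-1}, and your two reductions --- first identifying the $G$-orbits with the rank strata $X_r$ (respectively, with the determinant level sets inside $X_n$ when $m=n$), using that the elementary subgroups generate $\SL_n\times\SL_m$ while every replica acts by a unimodular row or column operation and hence preserves rank and determinant; then matching $S_1$ to $S_2$ one point at a time by elements of $G_{\cN,Z}$ --- are exactly how that application goes. The orbit identification and the induction scheme are correct, and you rightly observe that Theorem~\ref{ccbb} cannot be quoted verbatim because the relevant orbits are not open.

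The gap is in your final paragraph. The interpolation you invoke does not exist in the configurations that actually matter: a replica coefficient $f$ must lie in $\ker\delta_{kl}$, hence is constant along $H_{kl}$-orbits (and your restricted choice, a polynomial in the entries outside row $k$, is constant on every fibre of the projection forgetting row $k$). So if some $B_i\in Z$ lies on the $H_{kl}$-orbit through the point $x$ you want to move --- for instance $B_i=\exp(\delta_{kl})\,x$ --- then $f|_Z=0$ forces $f(x)=0$, and the replica fixes $x$ as well; no choice of $f$ separates them. Nothing in your induction prevents such a configuration from occurring along the path from $g.A_j$ to $B_j$. Overcoming exactly this obstruction is the technical content of \cite[Theorem~2.5]{AFKKZ-1} (Theorem~\ref{ccbb} here) and of its Section~3 refinement to non-open orbits: one uses the hypothesis that the orbit has dimension $\ge 2$, together with the transversality statement (Theorem~\ref{ccgg}), to first perturb $x$ by other fields in $\cN_Z$ so that the one-dimensional orbits actually used avoid $Z$, and only then applies a replica. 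Your sketch asserts the conclusion of that perturbation argument rather than supplying it; this is the missing step, and it is also the point where the standing assumption $\dim X_r\ge 2$ enters.
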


Similar results in the case of symmetric and skew-symmetric matrices are obtained in ~\cite[Section~3.3]{AFKKZ-1}.

\subsection{$\AA^1$-richness and the Gromov-Winkelmann Theorem} Let $X$ be a flexible affine variety of dimension $\ge 2$. Consider a set of pairwise distinct points $p_1,\ldots,p_k\in X^\reg$. Let us fix a $\GG_a$-orbit $C$ on $X^\reg$ and a set of $k$ pairwise distinct points ${q_1,\ldots,q_k\in C}$. It follows from infinite transitivity that there is such an element
$g\in\SAut(X)$ that $g\cdot q_1=p_1,\ldots,g\cdot q_k=p_k$. Thus, the shift $g\cdot C$ of the curve $C$ is a $\GG_a$-orbit on $X$ with respect to the conjugated action and it passes through the points $p_1,\ldots,p_k$. This elementary observation can be strengthened as follows.

An algebraic variety $X$ is called \emph{$\AA^1$-rich} if for any finite subset $Z$ and any closed subvariety $Y$ of codimension $\ge 2$ that does not intersect $Z$, there is a curve on $X$ that is isomorphic to the line $\AA^1$, does not intersect $Y$, and passes through all points of $Z$. The next result follows from the Transversality Theorem (Theorem~\ref{ccgg}).

\begin{proposition}[{\cite[Corollary~4.18]{AFKKZ-1}}]
Let us consider an affine variety $X$ and assume that the group $\SAut(X)$ acts with an open orbit~${O\subseteq X}$. Then for any finite subset $Z\subseteq O$ and any closed subvariety $Y\subseteq X$ of codimension $\ge 2$ with the condition $Z\cap Y=\varnothing$ there is an orbit $C\cong\AA^1$ of some $\GG_a$-action on $X$ which does not intersect $Y$ and passes through all points of the subset $Z$
\end{proposition}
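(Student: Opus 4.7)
The plan is to first use infinite transitivity of $\SAut(X)$ on $O$ to thread a $\GG_a$-orbit through every point of $Z$, and then to perturb that orbit, via an automorphism that fixes $Z$ pointwise, so that the result avoids $Y$. The dimension hypothesis $\codim_X Y\ge 2$ is what allows the Transversality Theorem~\ref{ccgg} to finish the job.

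\textbf{Step 1 (a $\GG_a$-orbit through $Z$).} Pick any non-trivial $\GG_a$-subgroup $H_0\subseteq\SAut(X)$ having a one-dimensional orbit $D_0$ through some point of $O$; such $H_0$ exists because $\SAut(X)$ acts non-trivially on the open orbit $O$, which has dimension $\ge 2$. Then $D_0\cong\AA^1$ and $D_0\subseteq O$. Write $Z=\{p_1,\ldots,p_k\}$, choose pairwise distinct points $q_1,\ldots,q_k\in D_0$, and apply Theorem~\ref{ccaa} to obtain $g\in\SAut(X)$ with $g(q_i)=p_i$ for all $i$. The curve $C_0:=g\cdot D_0$ is an orbit of the $\GG_a$-subgroup $gH_0g^{-1}$, lies in $O$, is isomorphic to $\AA^1$, and contains $Z$.

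\textbf{Step 2 (perturbing off $Y$).} Fix a saturated generating set $\cN$ of locally nilpotent vector fields for $\SAut(X)$ and let $\cN_Z\subseteq\cN$ be the subset of fields vanishing on $Z$. Then $\cN_Z$ is saturated (closed under replicas, since $f(z)\partial(z)=0$, and invariant under conjugation by elements of $G_{\cN,Z}$, which fix $Z$ pointwise). By Theorem~\ref{ccbb} the subgroup $G_{\cN,Z}$ acts transitively on the open subset $O\setminus Z\subseteq X$. Applying Theorem~\ref{ccgg} to $G_{\cN,Z}$ with the locally closed subvarieties $C_0\setminus Z$ and $Y\cap O$ of $O\setminus Z$ produces $\GG_a$-subgroups $H_1,\ldots,H_s\subseteq G_{\cN,Z}$ and a nonempty open set $U\subseteq H_1\times\cdots\times H_s$ such that for every $(h_1,\ldots,h_s)\in U$,
\[
\dim\bigl((h_1\cdots h_s)(C_0\setminus Z)\cap (Y\cap O)\bigr)\le 1+(\dim X-2)-\dim X<0,
\]
so the intersection is empty. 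Setting $h:=h_1\cdots h_s$, the automorphism $h$ fixes $Z$ pointwise, so $h(Z)=Z$ remains disjoint from $Y$; together with $h(C_0\setminus Z)\cap Y=\varnothing$ this gives $(h\cdot C_0)\cap Y=\varnothing$. Hence $C:=h\cdot C_0$ is the orbit of the $\GG_a$-subgroup $h(gH_0g^{-1})h^{-1}$, is isomorphic to $\AA^1$, contains $Z$, and avoids $Y$.

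\textbf{Main obstacle.} The delicate point is compatibility: one and the same $\GG_a$-orbit must meet every point of the prescribed finite set $Z$ while simultaneously avoiding the codimension-two locus $Y$. The strategy resolves this by decoupling the two tasks: infinite transitivity of $\SAut(X)$ on $O$ is used once to route the initial curve through $Z$, after which the entire perturbation takes place inside the pointwise stabilizer $G_{\cN,Z}$. The crucial enabling fact is Theorem~\ref{ccbb}, which guarantees that $G_{\cN,Z}$ still has an open orbit on $O\setminus Z$; without this, the Transversality Theorem would have no room to operate.
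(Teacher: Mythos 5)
Your argument is correct and follows essentially the route the paper itself indicates: first thread a $\GG_a$-orbit through $Z$ via infinite transitivity (the "elementary observation" preceding the proposition), then move it off $Y$ using the Transversality Theorem~\ref{ccgg}. Your key added detail — running the transversality argument inside the pointwise stabilizer $G_{\cN,Z}$, which Theorem~\ref{ccbb} guarantees still has $O\setminus Z$ as an open orbit — is exactly what makes the two requirements compatible, and is the content of the cited Corollary~4.18 of \cite{AFKKZ-1}.
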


In the case $X=\AA^n$, this result follows from the Gromov-Winkelmann Theorem~\cite{Wi}. This theorem states that the group $\SAut (\AA^n\setminus Y)$ acts on
$\AA^n\setminus Y$ transitively and, as we know, this entails infinite transitivity of the action.

\smallskip

A generalization of the Gromov-Winkelmann Theorem is obtained in ~\cite{FKZ}. Firstly, the authors show that the flexibility of an affine variety $X$ is equivalent to the flexibility of the smooth quasi-affine variety $X^\reg$.

\begin{theorem}[{\cite[Theorem~0.1]{FKZ}}]
Let $X$ be a smooth flexible quasi-affine variety of dimension $\ge 2$ and $Y\subseteq X$ be a closed subscheme of codimension $\ge 2$. Then the variety $X\setminus Y$ is also flexible.
\end{theorem}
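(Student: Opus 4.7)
The plan is to establish flexibility of $X\setminus Y$ pointwise, at an arbitrary $x_0\in X\setminus Y$. Because $X$ is smooth, $x_0$ is smooth in $X\setminus Y$ and $T_{x_0}(X\setminus Y)=T_{x_0}X$. Because $X$ is normal and $\codim_X Y\ge 2$, Hartogs' theorem yields $\KK[X\setminus Y]=\KK[X]$, so the locally nilpotent derivations of $\KK[X\setminus Y]$ are precisely those LNDs of $\KK[X]$ whose associated $\GG_a$-flow preserves $Y$ as a closed subscheme. Proving flexibility of $X\setminus Y$ at $x_0$ therefore reduces to exhibiting enough such LNDs whose values at $x_0$ span $T_{x_0}X$.

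The principal device is the replica construction of Subsection~\ref{1.3}. Given an LND $\partial$ of $\KK[X]$ and any function $f\in\ker\partial\cap I(Y)$, the derivation $f\partial$ is again locally nilpotent and vanishes identically on $Y$ as a vector field; hence the induced $\GG_a$-flow fixes $Y$ pointwise, $f\partial$ descends to an LND of $\KK[X\setminus Y]$, and its value at $x_0$ equals $f(x_0)\partial(x_0)$. Since $X$ is flexible, Corollary~\ref{c121} supplies LNDs $\partial_1,\dots,\partial_n$ of $\KK[X]$ whose values at $x_0$ span $T_{x_0}X$, so the task reduces to finding, for each $i$, some $f_i\in\ker\partial_i\cap I(Y)$ with $f_i(x_0)\ne 0$.

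The existence of $f_i$ amounts to the rational $\GG_a$-quotient $\pi_{\partial_i}\colon X\dashrightarrow\Spec(\ker\partial_i)$ separating $x_0$ from $Y$, i.e., to $\pi_{\partial_i}(x_0)\notin\overline{\pi_{\partial_i}(Y)}$; geometrically, this says that $x_0$ does not lie in the closure of the $\partial_i$-saturation of $Y$. Since $\dim Y+1\le\dim X-1$, this saturation is a proper closed subvariety of $X$, so the condition fails only if $x_0$ accidentally sits inside it. The key technical step is to show that one may replace each $\partial_i$ by a well-chosen alternative LND $\partial_i'$ (with $\partial_i'(x_0)$ still a nontrivial multiple of $\partial_i(x_0)$) for which the separation condition holds. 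The natural tool is the Transversality Theorem~\ref{ccgg}: with $\dim Y+\dim\overline{\GG_a\cdot x_0}=\dim Y+1<\dim X$, for generic $g=h_1\cdots h_s$ drawn from a fixed product of $\GG_a$-subgroups of $\SAut(X)$ (which acts with open orbit $X$ itself by flexibility), the translate $g(Y)$ is disjoint from the $\partial_i$-orbit closure of $x_0$; refining this into a separation statement in the quotient and combining it with the abundance of LNDs with a prescribed value at $x_0$ supplied by flexibility of $X$ is the substance of the argument.

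Assembling the replicas $f_i\partial_i'$ yields LNDs of $\KK[X\setminus Y]$ whose values at $x_0$ span $T_{x_0}X$, witnessing flexibility of $X\setminus Y$ at $x_0$; since $x_0$ was arbitrary, $X\setminus Y$ is flexible. The hardest part is the transversality step: one must simultaneously retain enough LNDs to span $T_{x_0}X$ and arrange that each satisfies the separation property with respect to the fixed $Y$. The codimension-two hypothesis on $Y$ is indispensable here, for if $\codim_X Y=1$, the image $\pi_\partial(Y)$ could fill the entire $\GG_a$-quotient and no separating $\partial$-invariant would exist. A secondary technical point is the passage from the affine to the quasi-affine setting, already handled by the quasi-affine generalizations of Theorem~\ref{tmain} in \cite{APS,FKZ} quoted in the introduction.
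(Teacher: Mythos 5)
The survey does not actually prove this theorem; it only cites \cite[Theorem~0.1]{FKZ}, whose proof occupies most of that paper. Your proposal reproduces the natural first line of attack and correctly locates the difficulty, but it does not overcome it, so there is a genuine gap at the decisive step. The reduction itself is fine: since $X$ is smooth and $\codim_X Y\ge 2$, regular functions extend across $Y$; a replica $f\partial$ with $f\in\ker\partial\cap I(Y)$ integrates to a $\GG_a$-action fixing $Y$ pointwise and hence restricting to $X\setminus Y$; and flexibility at $x_0$ would follow if, for each of the spanning fields $\partial_i$ provided by Corollary~\ref{c121}, one could produce $f_i\in\ker\partial_i\cap I(Y)$ with $f_i(x_0)\ne 0$. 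The peripheral points (Hartogs extension, the extension of $\GG_a$-actions from $X\setminus Y$ to $X$ via \cite[Proposition~1.8]{FKZ}, and the role of the codimension hypothesis) are also correct.

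The gap is in the claim that after replacing $\partial_i$ by a suitable $\partial_i'$ such $f_i$ exist. Two distinct problems are conflated there. First, what is needed is not that $x_0$ avoid the saturation $\overline{\GG_a\cdot Y}$, nor that some translate $g(Y)$ be disjoint from the orbit closure of $x_0$, but that $x_0$ avoid the zero locus of the ideal $\ker\partial_i\cap I(Y)$. Since $\GG_a$ is not reductive, invariants need not separate a point from a disjoint closed invariant set: that zero locus is the full preimage of the closure of the image of $Y$ under the partial quotient, whose fibres can jump in dimension, so it may be strictly larger than the saturation; disjointness statements coming from Theorem~\ref{ccgg} do not convert into separation by invariants. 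Second, Theorem~\ref{ccgg} produces a generic translate $g(Y)$, but a derivation fixing $g(Y)$ pointwise must be conjugated by $g$ to fix $Y$, and this conjugation simultaneously moves the base point and the tangent vector; your assertion that one can keep $\partial_i'(x_0)$ a nonzero multiple of $\partial_i(x_0)$ \emph{and} force the invariants of $\partial_i'$ to separate $x_0$ from the fixed subvariety $Y$ is exactly the hard content of the theorem, and your own phrasing (``refining this into a separation statement \dots is the substance of the argument'') concedes that it is not carried out. In \cite{FKZ} this step is handled by a considerably more elaborate machinery built around one-parameter subgroups compatible with $Y$ and an orbit-separation analysis, not by a single application of transversality; as written, your argument establishes flexibility of $X\setminus Y$ only at those points $x_0$ which happen to lie outside the bad loci $V\bigl(\ker\partial_i\cap I(Y)\bigr)$, not at all of them.
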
 

Finally, it is shown in ~\cite[Proposition~1.8]{FKZ} that for a normal quasi-affine variety $X$ and a closed subset $Y\subseteq X$ of codimension $\ge 2$, each $\GG_a$-action on $X\setminus Y$ can be extended to a $\GG_a$-action on $X$, which stabilizes~$Y$.

\subsection{Transitivity on jets} We limit ourselves in this subsection to the formulation of only one result. By the volume form on a variety~$X$ of dimension $n$, we mean a differential $n$-form $\omega$ defined on $X^\reg$ that does not vanish anywhere.

\begin{theorem} [{\cite[Theorem~4.14 and Remark~4.16]{AFKKZ-1}}]
Let $X$ be a flexible affine variety of dimension $n\ge 2$ equipped with a volume form~$\omega$. Then for any $m\ge 0$ and any finite subset
$Z\subseteq X^\reg$ there is an automorphism $g\in\SAut(X)$ with prescribed $m$-jets at points $p\in Z$ provided that these jets preserve the form $\omega$ and map $Z$ to $X^{\reg}$  injectively. The same holds without assuming that there is a global volume form on $X^{\reg}$ provided that each point $p\in Z$ corresponds to a jet that fixes the point $p$ and the linear part of the jet belongs to $\SL(T_pX)$.
\end{theorem}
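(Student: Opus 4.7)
My plan is to first reduce to the case of prescribing jets that fix their own base points, and then prove surjectivity of a jet evaluation homomorphism from the subgroup of $\SAut(X)$ fixing those points. This reduction already shows that the second (fixed-point, $\SL$-linear-part) version is the substantive content, and that the first version follows from it.

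\textbf{Reduction.} Given prescribed $m$-jets $j_p$ at each $p\in Z$ with targets $p':=j_p(p)$, infinite transitivity (Theorem~\ref{ccaa}) yields $g_0\in\SAut(X)$ with $g_0(p)=p'$ for every $p\in Z$. After composing with $g_0^{-1}$, the task becomes: prescribe an $m$-jet at each $p'\in Z':=g_0(Z)$ fixing $p'$, with linear part in $\SL(T_{p'}X)$ and higher-order parts $\omega$-preserving (the latter only in the first version). Thus the first version follows from the second, because $\omega$-preservation is a closed algebraic condition that at each $p'$ is equivalent precisely to the conjunction of unimodular linear part and the higher-order unimodularity conditions one prescribes.

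\textbf{Surjectivity of jet evaluation.} Let $G_{\cN,Z'}\subseteq\SAut(X)$ be the subgroup generated by the saturated set $\cN_{Z'}$ of locally nilpotent vector fields vanishing on $Z'$. Consider
$$
\Phi^m\colon G_{\cN,Z'}\longrightarrow\prod_{p'\in Z'}J^m_{\SL,p'},
$$
where $J^m_{\SL,p'}$ is the algebraic group of $m$-jets of local automorphisms fixing $p'$ with $\SL$-linear part. The image lies in this target because $\GG_a$ has no nontrivial character to $\GG_m$, so each one-parameter subgroup preserves $\omega$, and the replica identity $\mathcal{L}_{f\partial}\omega=(\partial f)\omega$ vanishes for $f\in\ker\partial$. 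I would show $\Phi^m$ is surjective by a tangent-space computation. Filter the target Lie algebra by vanishing order at $Z'$; at each level $k=1,\ldots,m$ one produces replicas $f\partial\in\cN_{Z'}$, with $\partial\in\cN$ satisfying $\partial(p')\ne 0$ and $f\in\ker\partial$ vanishing at $Z'$ to order $k$ with prescribed leading $m$-jet at each $p'\in Z'$. Flexibility guarantees that the vectors $\partial(p')$ span $T_{p'}X$, while Corollary~\ref{ccdd} together with the transversality Theorem~\ref{ccgg} enable multi-point interpolation within $\ker\partial$. The linear parts in $\sl(T_{p'}X)$ are obtained because rank-one nilpotents $df(p')\otimes\partial(p')$, which automatically satisfy $df(p')(\partial(p'))=(\partial f)(p')=0$, generate $\sl(T_{p'}X)$. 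An inductive assembly, correcting the target jet from highest order down to order $1$ so that later corrections live in deeper pieces of the filtration, then promotes Lie-algebra surjectivity to group surjectivity.

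\textbf{Main obstacle.} The hardest step, I expect, is the simultaneous multi-point interpolation inside the invariant ring $\ker\partial$: given several points $p'_1,\ldots,p'_k\in Z'$ and a prescribed divergence-free $m$-jet symbol at each, produce, for a single $\partial\in\cN$, an invariant $f\in\ker\partial$ whose replica $f\partial$ realizes those symbols simultaneously without introducing spurious lower-order terms. Separation of orbits by invariants (Corollary~\ref{ccdd}) supplies function-level control, but matching higher-order jets at several points at once is delicate; I would iterate, peeling off one point at a time, switching between different generators $\partial\in\cN$ supplied by flexibility, and invoking Theorem~\ref{ccgg} to make the generic choices avoid unwanted coincidences. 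Once this interpolation is secured, the rest of the argument is essentially formal bookkeeping with filtrations.
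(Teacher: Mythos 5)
The survey itself gives no proof of this theorem --- it explicitly ``limits itself to the formulation'' and refers to \cite[Theorem~4.14 and Remark~4.16]{AFKKZ-1} --- so the comparison is with that source. Your overall strategy is the one used there: reduce by infinite transitivity (Theorem~\ref{ccaa}) to jets fixing the points of $Z$, then prove surjectivity of a jet-evaluation map on the point-fixing subgroup by an order-by-order argument with replicas, the linear part being handled by the fact that the transvections $v\mapsto v+df(p)(v)\,\partial(p)$ with $df(p)(\partial(p))=0$ generate $\SL(T_pX)$. That part of the plan is sound.

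There is, however, a genuine gap in how you relate the two versions, and it infects the surjectivity claim. Your own observation that each replica $f\partial$ with $f\in\ker\partial$ is divergence-free shows that, whenever a global volume form $\omega$ exists, the image of $\Phi^m$ lands in the group of $\omega$-preserving jets, which for $m\ge 2$ is a \emph{proper} subgroup of $J^m_{\SL,p'}$; so $\Phi^m$ cannot be surjective onto $J^m_{\SL,p'}$ in that case. (Concretely, for $X=\AA^2$ one has $\SAut(\AA^2)=J_2$, and the $2$-jet $(x+x^2,y)$ at the origin has identity linear part but is realized by no element of $J_2$, since its Jacobian determinant is $1+2x$.) Hence ``the first version follows from the second'' is not tenable as stated: the two versions need parallel but distinct higher-order analyses with different targets --- surjectivity onto exactly the $\omega$-preserving jets when $\omega$ exists, versus onto all of $J^m_{\SL,p'}$ when it does not, the latter using precisely that $\div(f\partial)=f\,\div(\partial)$ need not vanish beyond leading order for a merely local volume form. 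Your tangent-space computation, phrased in terms of ``divergence-free $m$-jet symbols,'' addresses only the first target while your stated codomain is the second. Two further, smaller points: the inductive assembly must run from low order to high order (a correction that is the identity modulo $\mathfrak{m}^{k+1}$ does not disturb the already-matched part of order $\le k$, whereas your ``from highest order down to order $1$'' would let the linear correction destroy the matched top-order terms); and you should justify that for fixed $\partial$ the covectors $df(p)$, $f\in\ker\partial$, span the whole annihilator of $\partial(p)$ --- true where the quotient by $\partial$ is smooth, and arranged in general by varying $\partial$ or moving $p$ by the group. The multi-point interpolation you single out as the main obstacle is indeed the crux, and the correct device is the one you gesture at: multiply by invariants vanishing to order $>m$ at the remaining points of $Z$, i.e.\ work inside $\cN_{Z\setminus\{p\}}$.
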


Other results on transitivity on jets can be found in~\cite[Section~4.2]{AFKKZ-1}. 

\subsection{Multiple transitivity of the automorphism group} It is natural to ask whether the flexibility is the only reason that the automorphism group $\Aut(X)$ of an affine variety $X$ acts on the smooth locus of $X$ multiply transitively. In~\cite{Ar} a positive answer to this question is obtained under some additional restrictions.

We say that a variety $X$ satisfies \emph{condition~$(*)$} if $X$ admits a non-identical action of a connected affine algebraic group of positive dimension. Condition~$(*)$ is equivalent to the fact that there is a non-identical action of either the group $\GG_a$ or the group $\GG_m$ on~$X$.

\begin{theorem}[{\cite[Theorem~11]{Ar}}] 
Let $X$ be an irreducible quasi-affine variety of dimension $\ge 2$ satisfying condition~$(*)$. Suppose that the group $\Aut(X)$ acts on $X$ with an open orbit~$O$. Then the following conditions are equivalent.
{\allowdisplaybreaks
\begin{enumerate}
\item[(1)]
The group $\Aut(X)$ acts on $O$ $2$-transitively. 
\item[(2)]
The group $\Aut(X)$ acts on $O$ infinitely transitively. 
\item[(3)]
The group $\SAut(X)$ acts on $O$ transitively. 
\item[(4)]  
The group $\SAut(X)$ acts on $O$ infinitely transitively. 
\end{enumerate}
}
\hspace{-2mm}
In particular, the group $\Aut(X)$ acts on the smooth locus $2$-transitively if and only if the variety $X$ is flexible.
\end{theorem}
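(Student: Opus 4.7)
The implications $(4)\Rightarrow(3)$, $(4)\Rightarrow(2)\Rightarrow(1)$ are immediate from the definitions. The converse implication $(3)\Rightarrow(4)$ is precisely the quasi-affine extension of Theorem~\ref{tmain} recorded just after that theorem in the text: \cite{APS} and \cite{FKZ} upgrade transitivity of $\SAut(X)$ on its open orbit to infinite transitivity for every irreducible quasi-affine variety of dimension $\ge 2$. The substantive content is therefore the implication $(1)\Rightarrow(3)$.

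I would split the proof of $(1)\Rightarrow(3)$ into two steps. The short, \emph{conditional} step assumes one has already extracted from the data a non-trivial $\GG_a$-subgroup $H\subseteq\Aut(X)$ and then deduces transitivity of $\SAut(X)$ on $O$ by an elementary orbit-transport argument. Pick $x_0\in O$ outside the (proper closed) fixed locus of $H$; since $H.x_0\cong\AA^1$ and $O$ is open, the intersection $H.x_0\cap O$ is cofinite in $H.x_0$, so one can choose $y_0\in H.x_0\cap O$ with $y_0\ne x_0$. Given arbitrary distinct $x,y\in O$, hypothesis~(1) produces $g\in\Aut(X)$ with $g(x_0)=x$ and $g(y_0)=y$; then $gHg^{-1}$ is again a $\GG_a$-subgroup, it lies in $\SAut(X)$ by normality, and its orbit through $x$ equals $g(H.x_0)$, which contains $y$. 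Hence every two points of $O$ lie in the same $\SAut(X)$-orbit, and (3) follows.

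The genuinely difficult step is to produce the $\GG_a$-subgroup $H$ in the first place. Condition~$(*)$ supplies either a non-trivial $\GG_a$-subgroup of $\Aut(X)$ (in which case nothing is needed) or only a non-trivial $\GG_m$-subgroup $T\subseteq\Aut(X)$. In the latter case a useful first observation is that $T$ cannot be normal in $\Aut(X)$: normality would render the equivalence relation ``lying in the same $T$-orbit'' $\Aut(X)$-invariant on $O$, and combined with $2$-transitivity this forces every two distinct points of $O$ to lie on a common $T$-orbit, contradicting $\dim O\ge 2 > \dim T.x$. Consequently some conjugate $T'=gTg^{-1}$ differs from $T$. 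The remaining task is to extract a unipotent one-parameter subgroup from the interaction of $T$ and $T'$ inside the ind-group $\Aut(X)$; a natural strategy is to pass to a Zariski closure of $\langle T,T'\rangle$ in the ind-topology and invoke the fact that a connected non-abelian affine algebraic group must contain unipotent elements, hence a $\GG_a$-subgroup.

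This extraction of $\GG_a$-elements from $\GG_m$-data plus $2$-transitivity is the main technical obstacle of the theorem: it is where the quasi-affineness of $X$, condition~$(*)$, and the strong transitivity hypothesis play off against one another. Once carried out, the soft orbit-transport argument of the second step immediately closes out $(1)\Rightarrow(3)$ and, combined with the quasi-affine version of Theorem~\ref{tmain}, yields the whole equivalence as well as the concluding statement about $X^{\reg}$.
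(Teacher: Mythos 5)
Your reduction of the theorem to the implication $(1)\Rightarrow(3)$ is the same as the paper's, and your treatment of the non-rigid case is correct: the orbit-transport argument you give is in effect a direct proof, for the normal subgroup $\SAut(X)$, of the general fact (Lemma~\ref{prtr}) that a nontrivial normal subgroup of a $2$-transitive group acts transitively, which is exactly the tool the paper uses. (For $(3)\Rightarrow(4)$ the paper invokes Theorem~\ref{ccaa}; since $X$ is only quasi-affine here, citing the quasi-affine generalizations from \cite{APS} and \cite{FKZ} as you do is, if anything, the more careful route.)

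The genuine gap is in the rigid case, i.e.\ when $X$ admits no nontrivial $\GG_a$-action and condition~$(*)$ only provides a one-dimensional torus $T$. Your observation that such a $T$ cannot be normal in $\Aut(X)$ is fine, but the proposed extraction of a $\GG_a$-subgroup from $T$ and a conjugate $T'\neq T$ does not go through. First, the closure of $\langle T,T'\rangle$ in the ind-topology is a closed ind-subgroup, but there is no general reason for it to be a finite-dimensional algebraic group, so the structure theory of connected affine algebraic groups cannot be invoked. Second, two distinct conjugate one-dimensional tori may simply commute and generate a two-dimensional torus, which contains no unipotent elements at all; rigid varieties such as $X=(\GG_m)^2$ show that distinct subtori of $\Aut(X)$ coexist happily with the absence of any $\GG_a$-action. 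Third, since the case under discussion is precisely the one in which no $\GG_a$-subgroup exists, the argument must run as a proof by contradiction, and the contradiction has to be supplied by a genuine input. The paper supplies it via the theorem of \cite{AG} (extended to quasi-affine varieties in \cite{Ar}) that the automorphism group of a rigid variety contains a \emph{unique} maximal torus $T_{\max}$; this torus is automatically normal and is nontrivial by~$(*)$, so Lemma~\ref{prtr} forces it to act transitively on $O$, whence $O$ is a torus and $\Aut(O)\cong T_{\max}\rightthreetimes\GL_n(\ZZ)$ is visibly not $2$-transitive --- so condition $(1)$ simply cannot hold for rigid $X$. Note that your dimension count does not substitute for this: it rules out normality only of the given one-dimensional torus, not of $T_{\max}$, whose dimension may equal $\dim O$. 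Without the unique-maximal-torus theorem (or an equivalent input) the rigid case remains open in your proposal.
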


Implications $(4)\Rightarrow (2)$ and $(2)\Rightarrow (1)$ are obvious. Implication $(3)\Rightarrow (4)$ follows from Theorem~\ref{ccaa}. 

It remains to prove implication $(1)\Rightarrow (3)$. Here we give arguments that are somewhat different from the arguments in~\cite{Ar} and are based on the group-theoretic facts set out in the second appendix to this paper. Let us first assume that there is a non-identical action of the group $\GG_a$ on $X$. This means that the group $\SAut(X)$ is nontrivial. Since $\SAut(X)$ is a normal subgroup of the group $\Aut(X)$, condition~$(1)$ and Lemma~\ref{prtr} imply condition~$(3)$.

Let us recall that an algebraic variety $X$ is called \emph{rigid} if there is no non-identical $\GG_a$-action on $X$. For an affine $X$, this condition is equivalent to the fact that the algebra $\KK[X]$ does not admit nonzero locally nilpotent derivation. In this case, there is a unique maximal torus $T$ in the group $\Aut(X)$. This result is proved in~\cite[Theorem~1]{AG} for an affine variety $X$ and is generalized to the quasi-affine case in~\cite[Theorem~5]{Ar}. In particular, the maximal torus $T$ is a normal subgroup in $\Aut(X)$. According to condition~$(*)$, the torus $T$ is nontrivial. From Lemma~\ref{prtr} we conclude again that $T$ acts transitively on $O$. Thus, the orbit $O$ is isomorphic to the torus~$T$. However, the automorphism group 
of the variety $T$ is a semidirect product of $T$ and the discrete group $\GL_n(\ZZ)$. Such a group acts transitively but not $2$-transitively on $T$. Thus, for a rigid variety $X$ with condition~$(*)$ condition~$(1)$ does not hold, and implication $(1)\Rightarrow(3)$ is proved.

\smallskip

The ideas of this reasoning can be used in the following situation.

\begin{definition} 
We call a set of automorphisms of an algebraic variety $X$ \emph{geometric} if this set is invariant under conjugation by any automorphism of the variety~$X$ and contains at least one non-identical automorphism.
\end{definition} 

Informally speaking, geometric sets of automorphisms can be understood as sets that are defined by certain invariant geometric properties. For example, such sets form all automorphisms of order two that have a single fixed point, or all automorphisms that preserve the set of curves that do not intersect, cover the smooth locus of the variety $X$, and each of these curves is isomorphic to the affine line.

\begin{proposition} \label{relnew}
Let $X$ be a flexible quasi-affine variety of dimension $\ge 2$. Then for any geometric set of automorphisms of the variety $X$, the subgroup of the automorphism group generated by this set  acts on the smooth locus $X^{\reg}$ infinitely transitively.
\end{proposition}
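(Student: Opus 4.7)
The plan is to reduce the statement to two ingredients that are already on the table: the infinite transitivity of $\SAut(X)$ on $X^{\reg}$, and a purely group-theoretic principle about normal subgroups of infinitely transitive actions.

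First, I would use that since $X$ is flexible and quasi-affine of dimension $\ge 2$, the quasi-affine generalization of Theorem~\ref{tmain} (mentioned immediately after its statement) ensures that $\SAut(X)$ acts infinitely transitively on $X^{\reg}$. Since $\SAut(X) \subseteq \Aut(X)$, the full automorphism group $\Aut(X)$ a fortiori acts infinitely transitively on $X^{\reg}$. This is the only place where flexibility and dimension enter.

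Next, let $S$ be a geometric set of automorphisms and set $H = \langle S\rangle \subseteq \Aut(X)$. By the definition of \emph{geometric}, $S$ is stable under conjugation by any element of $\Aut(X)$. Conjugation by a fixed automorphism is a group automorphism of $\Aut(X)$, so it sends the subgroup generated by $S$ to the subgroup generated by its conjugate, namely $H$ itself. Hence $H$ is a normal subgroup of $\Aut(X)$. Also, $S$ contains a non-identical element, so $H$ is nontrivial.

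Finally, I would invoke Lemma~\ref{prtr} from Appendix~B, which asserts that a nontrivial normal subgroup of a group acting infinitely transitively on a set acts itself infinitely transitively on the same set. Applying it to the action of $\Aut(X)$ on $X^{\reg}$ and to the nontrivial normal subgroup $H$, the conclusion of the proposition follows. No step here is really an obstacle; the whole argument is essentially formal once one recognises that the geometric hypothesis is exactly tailored to produce a nontrivial normal subgroup, and that the hard analytic work has already been absorbed into the quasi-affine version of Theorem~\ref{tmain} and into Lemma~\ref{prtr}.
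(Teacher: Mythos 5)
Your argument is correct and is essentially the paper's own proof: flexibility gives infinite transitivity of $\SAut(X)$ (and hence of $\Aut(X)$) on $X^{\reg}$, a geometric set generates a nontrivial normal subgroup of $\Aut(X)$, and the normal-subgroup principle for infinitely transitive actions finishes it. The only slip is a reference label: the statement you invoke at the end is Theorem~\ref{norgr}, not Lemma~\ref{prtr} (the latter assumes only $2$-transitivity and yields only transitivity of the normal subgroup).
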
 

The proof follows immediately from the fact that a geometric set of automorphisms generates a nontrivial normal subgroup in $\Aut(X)$, and Theorem~\ref{norgr} shows that this group is infinitely transitive on $X^{\reg}$.

\smallskip

In~\cite{Po} the author studies relations between multiple transitivity of the action of a group of automorphisms and geometric properties of a variety, such as rationality or unirationality. An action of a group $G$ on a variety $X$ is said to be \emph{generically $m$-transitive} if the restriction of this action to some open invariant subset of the variety $X$ is $m$-transitive. In~\cite[Theorem~5]{Po} it is proved over an uncountable algebraically closed field that for any irreducible variety $X$, the condition of generic {$2$-transitivity} of the action of the group $\Aut(X)$ implies that either $X$ is unirational, or the group $\Aut(X)$ does not contain nontrivial connected algebraic subgroups. In particular, if the variety $X$ is complete, the generic $2$-transitivity of the action of $\Aut(X)$ on $X$ implies unirationality.

\smallskip

We also note that flexible varieties form a natural class of varieties where it makes sense to investigate the problem of continuation of isomorphisms between subvarieties. Namely, let $X$ be a smooth flexible quasi-affine variety and $Y_1,Y_2$ be two closed subvarieties in $X$. The extension problem consists in finding conditions on the variety $X$, the subvarieties $Y_1,Y_2$, and possibly tangent subbundles $TY_1,TY_2$, under which any isomorphism $Y_1\to Y_2$ can be extended to an automorphism of the variety $X$. A classical result of this type is the Abyankar-Moh-Suzuki Theorem, which states that the images of any two isomorphic embeddings of the affine line $\AA^1$ into the affine plane $\AA^2$ can be sent to each other by an automorphism of the plane. A number of sufficient conditions for extension of an isomorphism between subvarieties in a flexible variety can be found in~\cite{Ka,KU} and in the works listed there in the references.

\section{Examples of flexible varieties}
\label{sec-3}

Considerable interest in flexible varieties is caused not only because such varieties have a number of remarkable geometric properties, but also by the fact that the class of flexible varieties is surprisingly wide. In this section we give various examples of flexible varieties. We largely follow the work plan from~\cite{AFKKZ-2}, supplementing the presentation with the results of recent years.

\subsection{Suspensions} One of the first papers, where the property of infinite transitivity of the action of a group of automorphisms of an algebraic variety was studied, is the work~\cite{KZ}.

\begin{theorem} [{\cite[Theorem~5.1]{KZ}}] \label{suspkz}
Consider a hyperplane $X$ in the affine space~$\AA^n,$ $n\ge 4,$ given by an equation of the form 
$$
x_1x_2=f(x_3,\ldots,x_n),
$$
where $f$ is an arbitrary non-constant polynomial in $n-2$ variables. Then the group $\SAut(X)$ acts on the smooth locus of the variety $X$ infinitely transitively. 
\end{theorem}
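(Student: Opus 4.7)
The plan is to apply Theorem~\ref{tmain} and reduce the claim to flexibility of $X$; by Corollary~\ref{c121}, flexibility at a smooth point $p$ amounts to producing locally nilpotent derivations of $A = \KK[X]$ whose values at $p$ span $T_p X$.

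The basic construction is a pair of families of LNDs $D_L, D_L'$ on $A$, indexed by LNDs $L$ of the polynomial ring $R = \KK[x_3, \ldots, x_n]$. Setting
\[
D_L(x_1) = 0, \quad D_L(x_2) = L(f), \quad D_L(x_i) = x_1 L(x_i) \text{ for } i \geq 3,
\]
defines a derivation of $A$ preserving the relation $x_1 x_2 - f$, and the identity $D_L^k(x_i) = x_1^k L^k(x_i)$ (which uses $x_1 \in \ker D_L$) together with the local nilpotency of $L$ shows that $D_L$ is locally nilpotent; the family $D_L'$ is obtained by swapping $x_1$ and $x_2$. Specializing to $L = \partial/\partial x_i$, a direct check shows that at any smooth point $p = (a_1, a_2, b)$ with $a_1 \neq 0$ the vectors $D_{\partial/\partial x_i}(p)$, combined with a single $D'_{\partial/\partial x_i}(p)$, already span $T_p X$; the case $a_2 \neq 0$ is symmetric. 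This settles every smooth point lying outside the stratum $\{x_1 = x_2 = 0\}$.

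The main obstacle is the remaining case $p = (0, 0, b)$, which forces $f(b) = 0$: the vectors $D_L(p)$ and $D_L'(p)$ all lie in the two-dimensional plane spanned by the $x_1$- and $x_2$-directions, while $\dim T_p X = n-1 \geq 3$. To produce tangent vectors with nontrivial $x_i$-components ($i \geq 3$) I would use that the set of LNDs generating $\GG_a$-subgroups of $\SAut(X)$ is closed under conjugation by $\SAut(X)$. Choose $m \in \{3, \ldots, n\}$ with $f_{x_m}(b) \neq 0$, which exists by smoothness of $p$, and set $\phi_t = \exp(t D_{\partial/\partial x_m})$; then $\phi_t(p) = (0, t f_{x_m}(b), b)$, and a straightforward calculation shows that the conjugated LND $\phi_t \circ D'_{\partial/\partial x_j} \circ \phi_t^{-1}$ has tangent vector at $p$ whose $x_k$-component ($k \geq 3$) equals
\[
t\bigl(-f_{x_m}(b)\,\delta_{jk} + f_{x_j}(b)\,\delta_{mk}\bigr).
\]
As $j$ ranges over $\{3, \ldots, n\}\setminus\{m\}$, these $n-3$ vectors are linearly independent and span the hyperplane $\bigl\{v : \sum_k f_{x_k}(b)\,v_{x_k} = 0\bigr\}$ inside the $(x_3, \ldots, x_n)$-coordinate subspace, which is precisely the ``horizontal'' part of $T_p X$; together with the $x_1$- and $x_2$-directions supplied by $D_L(p), D_L'(p)$ they span $T_p X$, so $p$ is flexible.

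Having verified flexibility at every smooth point, Theorem~\ref{ccaa} upgrades the resulting transitivity of $\SAut(X)$ on $X^{\reg}$ to infinite transitivity.
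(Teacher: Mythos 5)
Your overall architecture --- reduce to flexibility via Theorem~\ref{tmain} and exhibit locally nilpotent derivations whose values span each tangent space --- is the standard one, and your two families $D_L$, $D'_L$ are precisely the derivations used in the original sources. (The survey itself does not reprove the statement: it cites \cite{KZ} and then observes that $X=\Susp(\AA^{n-2},f)$ is flexible by the general suspension theorem, Theorem~\ref{suspakz}.) Your verification of local nilpotency and your treatment of the stratum $x_1=x_2=0$, including the conjugation computation, are correct.

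However, the ``direct check'' you invoke at points with $a_1\neq 0$ is false whenever $b$ is a critical point of $f$. If all $\tfrac{\partial f}{\partial x_k}(b)$ vanish, then for \emph{every} derivation $L$ of $R$ one has $L(f)(b)=\sum_k \tfrac{\partial f}{\partial x_k}(b)\,L(x_k)(b)=0$, so both $D_L(p)$ and $D'_L(p)$ have vanishing $x_1$- and $x_2$-components; together they span at most the $(n-2)$-dimensional ``horizontal'' subspace, while $\dim T_pX=n-1$. Concretely, for $f=x_3^2$ and $p=(1,0,0,\ldots,0)$ the vectors $D'_{\partial/\partial x_i}(p)$ all vanish, and no choice of $L$ produces a tangent vector with nonzero $x_1$-component, so your list does not span $T_pX$ there. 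Such points exist for any $f$ with a critical point, so this is a genuine gap in a pointwise-verification strategy. It is repairable by the same device you use on the stratum $x_1=x_2=0$: flow $p$ by a suitable composition of the maps $\exp(tD_{\partial/\partial x_m})$ to a point $q=(a_1,\ast,b')$ with $b'$ noncritical for $f$ (possible because these flows translate $b$ freely in all coordinate directions when $a_1\neq 0$ and the critical locus is a proper closed subset), check spanning at $q$, and conjugate back. Equivalently, and more in the spirit of Section~1, the set of $G$-flexible points coincides with the open orbit of the group $G$ generated by your $\GG_a$-subgroups, so it suffices to verify the spanning condition at a single point and to prove transitivity of $G$ on $X^{\reg}$ --- which is how the cited proofs actually proceed.
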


This result motivates the following definition. 

\begin{definition}
Let $X$ be an irreducible affine variety and $f\in\KK[X]$ be a non-constant function. The \emph{suspension} over the variety $X$ is the affine variety
$$
\Susp(X,f)=\{uv-f(x)=0\}\subseteq\AA^2 \times X,
$$
where $u$ and $v$ are coordinates on the plane $\AA^2$. 
\end{definition}

In these terms, the varieties from Theorem~\ref{suspkz} are exactly suspensions over the affine space~$\AA^{n-2}$. The following general fact holds.

\begin{theorem}[{\cite[Theorem~3.2]{AKZ-1}}] \label{suspakz}
Let $X$ be a flexible affine variety and $f\in\KK[X]$ be a non-constant function. Then the suspension $\Susp(X,f)$ is also a flexible variety.
\end{theorem}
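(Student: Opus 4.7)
The plan is to apply Corollary~\ref{tr-fl}, reducing the flexibility of $Y=\Susp(X,f)=\{uv-f(x)=0\}$ to transitivity of $\SAut(Y)$ on $Y^{\reg}$. I exhibit three natural families of locally nilpotent derivations on $\KK[Y]$. For every $\partial\in\LND(\KK[X])$, define
$$D_I^{\partial}(u)=0,\quad D_I^{\partial}(v)=\partial(f),\quad D_I^{\partial}(g)=u\,\partial(g)\ \text{for}\ g\in\KK[X].$$
The identity $D_I^{\partial}(uv-f)=u\partial(f)-u\partial(f)=0$ shows this descends to $\KK[Y]$, and $D_I^{\partial}$ is locally nilpotent because $u\in\ker D_I^{\partial}$ and $\partial$ is locally nilpotent on $\KK[X]$. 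Symmetrically, $D_{II}^{\partial}$ is obtained by interchanging the roles of $u$ and $v$. In addition, each $\partial\in\LND(\KK[X])$ with $\partial(f)=0$ lifts horizontally to an LND on $\KK[Y]$ acting trivially on $u$ and $v$.

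At a generic smooth point $p_0=(u_0,v_0,x_0)\in Y$ with $u_0v_0\ne 0$, $x_0\in X^{\reg}$, and $df(x_0)\ne 0$ (such $p_0$ exists since $f$ is non-constant and $X^{\reg}$ is dense), the derivations of Families I and II already span $T_{p_0}Y$. Indeed, by flexibility of $X$ the values $w:=\partial(x)(x_0)$ sweep out $T_{x_0}X$, and the corresponding tangent vectors of Families I and II at $p_0$ are $(0,df(x_0)(w),u_0w)$ and $(df(x_0)(w),0,v_0w)$ respectively. The two $(\dim X)$-dimensional subspaces they span intersect in the $(\dim X-1)$-dimensional subspace $\{(0,0,w):w\in\ker df(x_0)\}$, so their sum has dimension $\dim X+1=\dim T_{p_0}Y$. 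Hence $p_0$ is flexible, and by Corollary~\ref{c121} its $\SAut(Y)$-orbit $O$ is open in $Y$.

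It remains to show every other smooth point $p\in Y^{\reg}$ lies in $O$; this will give transitivity of $\SAut(Y)$ on $Y^{\reg}$ and hence, by Theorem~\ref{tmain}, flexibility of $Y$. The main obstacle is the degenerate configuration, e.g.\ $p=(u',0,x')$ with $u'\ne 0$, $f(x')=0$, $df(x')=0$: here Families I, II, and the horizontal family collectively span only the hyperplane $\{du=0\}$ in $T_pY$, falling one dimension short of flexibility. The remedy is indirect. Using flexibility of $X$, choose $\partial\in\LND(\KK[X])$ whose orbit through $x'$ is not contained in the critical locus of $f$ (possible because this locus is a proper subvariety). Then the $D_I^{\partial}$-flow sends $p$ to $\phi_t(p)=(u',v(t),x(t))$, and for small $t\ne 0$ one has $x(t)\in X^{\reg}$, $df(x(t))\ne 0$, and $v(t)\ne 0$ (of order $t^2$), so $\phi_t(p)$ is a generic smooth point treated in the previous paragraph. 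Since $\phi_t\in\SAut(Y)$, the point $p$ itself lies in $O$. The remaining degenerate cases---symmetrically when $v'=0$ with $u'\ne 0$, and when $u'=v'=0$, in which case smoothness of $p$ forces $df(x')\ne 0$ so that a single Family I or II flow suffices---are dispatched in the same manner. Thus $\SAut(Y)$ is transitive on $Y^{\reg}$, and Corollary~\ref{tr-fl} completes the proof.
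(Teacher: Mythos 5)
Your lifted derivations $D_I^{\partial}$, $D_{II}^{\partial}$ are well defined and locally nilpotent, and the tangent-space computation at a point with $u_0v_0\neq 0$, $x_0\in X^{\reg}$, $df(x_0)\neq 0$ is correct; this is essentially the mechanism of the cited source. The difficulties are all in the reduction of the remaining smooth points to this generic locus. First, the case analysis is incomplete: a smooth point of $Y=\Susp(X,f)$ may have $u'v'\neq 0$ and yet $df(x')=0$ (a critical point of $f$ with nonzero critical value). Such a point is neither ``generic'' in your sense nor among the degenerate configurations you list, and at it Families I and II span only $\{(0,0,z):z\in T_{x'}X\}$, one dimension short.

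Second, and more seriously, your remedy rests on the existence of a \emph{single} $\partial\in\LND(\KK[X])$ whose orbit through $x'$ leaves the critical locus of $f$, justified by ``this locus is a proper subvariety.'' That inference is invalid: flexibility only controls the tangent directions $\partial|_{x'}$, and precisely in the problematic case $df(x')=0$ every first-order obstruction vanishes, so nothing you have proved prevents all $\GG_a$-orbits through $x'$ from lying inside the hypersurface $\{f=f(x')\}$ or inside the critical locus. What you are implicitly using is $\AA^1$-connectedness of flexible varieties (any two points of $X^{\reg}$ lie on a common $\GG_a$-orbit), which is a nontrivial consequence of the theory, not of the properness of a subvariety. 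The economical repair is to compose several Family~I flows: each fixes $u\equiv u'$ and moves the $X$-coordinate by $\exp(tu'\partial)$, so for $u'\neq 0$ their compositions realize every element of $\SAut(X)$ on the $X$-coordinate. By Corollary~\ref{tr-fl} you may therefore steer $x'$ to any $x''\in X^{\reg}$ with $f(x'')\neq 0$ and $df(x'')\neq 0$, landing at the generic point $(u',f(x'')/u',x'')\in O$; the symmetric argument with Family~II handles $v'\neq 0$. Note also that from $(0,0,x')$ a single Family~I flow only reaches $(0,v'',x')$ with $v''\neq 0$, which is still degenerate (its $u$-coordinate vanishes), so contrary to what you write one further round with Family~II is needed there. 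With these repairs the argument closes correctly via Corollary~\ref{tr-fl}.
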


Thus, iterating the suspension construction, we obtain a wide class of flexible varieties.

Note that in Theorem~\ref{suspakz}, the case of a one-dimensional variety $X$ is also allowed. In this case, we have $X=\AA^1$ automatically. In fact, a similar result for the affine line can be obtained over any field of characteristic zero.

\begin{theorem}[{\cite[Theorem~3.1]{AKZ-1}}] \label{suspa1}
Let $\KK$ be a field of characteristic zero and $f\in\KK[x]$ be a polynomial such that $f(\KK)=\KK$. Consider a hypersurface $X\subseteq\AA^3$ given by the equation $x_1x_2=f(x_3)$.
Then the group $\SAut(X)$ acts on the smooth locus of the variety $X$ infinitely transitively. 
\end{theorem}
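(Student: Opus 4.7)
The plan is to imitate the explicit three-step proof for $\AA^n$ given at the start of this section, replacing coordinate translations by flows of suitable locally nilpotent derivations of $\KK[X]$ and using Lagrange interpolation over $\KK$. The basic tools are
$$
\partial_1 = x_1 \frac{\partial}{\partial x_3} + f'(x_3)\frac{\partial}{\partial x_2}, \qquad \partial_2 = x_2\frac{\partial}{\partial x_3} + f'(x_3)\frac{\partial}{\partial x_1},
$$
both of which annihilate $x_1x_2-f(x_3)$ and are locally nilpotent on $\KK[X]$: one checks by induction that $(\phi(x_1)\partial_1)^k(x_2) = \phi(x_1)^k f^{(k)}(x_3)x_1^{k-1}$, which vanishes for $k>\deg f$, and similarly for $\partial_2$. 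Since $x_1\in\ker\partial_1$ and $x_2\in\ker\partial_2$, for every $\phi,\psi\in\KK[x]$ we obtain replicas yielding $\GG_a$-subgroups $U_\phi,V_\psi\subseteq\SAut(X)$. A Taylor expansion gives the explicit flow
$$
U_\phi(t)\colon (x_1,x_2,x_3)\longmapsto \left(x_1,\; x_2+\frac{f(x_3+t\phi(x_1)x_1)-f(x_3)}{x_1},\; x_3+t\phi(x_1)x_1\right),
$$
which is a genuine polynomial map because $f(x_3+u)-f(x_3)$ is divisible by $u$; $V_\psi(s)$ is obtained by swapping the roles of $x_1$ and $x_2$.

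Given $m$ pairwise distinct smooth points $p_1,\dots,p_m$, the hypothesis $f(\KK)=\KK$ lets me choose $\alpha_i\in\KK$ with $f(\alpha_i)=i^2$; the aim is to move the $p_i$ simultaneously to the standard configuration $(i,i,\alpha_i)$. I argue in four steps. \emph{Step 1:} iteratively apply replicas $V_\psi$ to make the $x_1$-coordinates $a_1,\dots,a_m$ pairwise distinct and nonzero, and moreover to arrange that the quantities $i^2/a_i$ are pairwise distinct. If two points share $x_1$, they differ in $x_2$ or $x_3$, and then the $x_1$-shift induced by a suitably chosen $\psi\in\KK[x]$ separates them without destroying prior distinctnesses; each failure mode is a proper closed condition on the parameter, so the infinitude of $\KK$ in characteristic zero supplies admissible choices. \emph{Step 2:} with $a_i$ distinct nonzero and current $x_3$-values $c_i$, pick $\phi\in\KK[x]$ via Lagrange interpolation so that $\phi(a_i)a_i=\alpha_i-c_i$; then $U_\phi(1)$ moves each $p_i$ to $(a_i,\,i^2/a_i,\,\alpha_i)$, the second coordinate being forced by the equation of $X$.

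\emph{Step 3:} using $f(\KK)=\KK$ again, pick $\gamma_i\in\KK$ with $f(\gamma_i)=i^3/a_i$ and interpolate $\psi\in\KK[x]$ with $\psi(i^2/a_i)\cdot(i^2/a_i)=\gamma_i-\alpha_i$, using that the nodes $i^2/a_i$ were made distinct in Step~1; then $V_\psi(1)$ sends $p_i$ to $(i,\,i^2/a_i,\,\gamma_i)$ since the new $x_1$ equals $f(\gamma_i)/(i^2/a_i)=i$. \emph{Step 4:} interpolate $\phi'\in\KK[x]$ with $\phi'(i)\cdot i=\alpha_i-\gamma_i$ (at the distinct nodes $1,\dots,m$) and apply $U_{\phi'}(1)$ to restore $x_3$ to $\alpha_i$; the equation then forces $x_2=i^2/i=i$, producing the desired configuration. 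The main obstacle I anticipate is Step~1 --- the combinatorial separation step --- where one must simultaneously secure three open genericity conditions (distinct $a_i$, nonzero $a_i$, distinct $i^2/a_i$) while handling boundary cases such as points with $x_2=0$, for which the action of $V_\psi$ on $x_1$ reduces to the linear term $s\psi(0)f'(x_3)$ and smoothness is needed to guarantee $f'(x_3)\neq 0$. All interpolating polynomials and auxiliary scalars $\alpha_i,\gamma_i$ lie in $\KK$, which is precisely what the hypothesis $f(\KK)=\KK$ together with the infinitude of $\KK$ ensures.
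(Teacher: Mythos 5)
Your overall architecture is sound and is essentially the right one for this statement: the two locally nilpotent derivations $\partial_1,\partial_2$ and their replicas are exactly the standard tools for suspensions over $\AA^1$, your formulas for the flows are correct, and Steps 2--4 (normalize $x_3$, then $x_1$, then $x_3$ again, letting the equation of $X$ force the remaining coordinate) work as written once the genericity conditions of Step 1 are in place. The hypothesis $f(\KK)=\KK$ is used correctly to produce the scalars $\alpha_i,\gamma_i$ over the non-closed field.

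The genuine gap is in Step 1, precisely at the boundary case you flag. Your claim that at a smooth point with $x_2=0$ ``smoothness is needed to guarantee $f'(x_3)\neq 0$'' is false: the gradient of $x_1x_2-f(x_3)$ is $(x_2,x_1,-f'(x_3))$, so a point $(a,0,c)$ with $a\neq 0$, $f(c)=f'(c)=0$ is a \emph{smooth} point of $X$ at which $f'$ vanishes. At such a point every $V_\psi(s)$ acts trivially (the $x_3$-increment is $s\psi(0)\cdot 0=0$ and the $x_1$-increment is $s\psi(0)f'(c)=0$), while $U_\phi$ never changes $x_1$. So if $f$ has a multiple root $c$ (e.g.\ $f=x^2(x-1)^2g(x)$ chosen surjective), the whole punctured line $\{(t,0,c):t\neq 0\}$ consists of smooth points whose first coordinate cannot be altered by any word in your generators of the form ``apply only $V_\psi$'s''; two such points sharing the same $x_1$ (say over two distinct multiple roots) defeat Step 1 as written, as does the condition that the $i^2/a_i$ be distinct. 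The repair is to interleave the two families: first apply a single $U_\phi$ with $\phi(a)a$ chosen so that the new third coordinate avoids the (finitely many) roots of $f$, which pushes every such point off $\{x_2=0\}$ while keeping the configuration of $m$ points pairwise distinct (a genericity condition over the infinite field $\KK$); after that every point has $x_2\neq 0$, its $x_1$-coordinate can be moved to infinitely many values by $V_\psi$ because $f$ is non-constant, and your Step 1 goes through. With that correction inserted before Step 1, the proof is complete and follows the same explicit interpolation strategy as the original argument for this theorem.
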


The case of the field of real numbers $\KK=\RR$ was considered separately. The initial result~\cite[Theorem~3.3]{AKZ-1} has been strengthened to the following general fact.

\begin{theorem}[{\cite[Theorem~1]{KM}}] 
Let $X$ be an affine algebraic variety over the field of real numbers and $f\in\RR[X]$. Suppose that each connected component $X^i$ of the smooth locus of the variety $X$ has dimension $\ge 2$ and $f$ is non-constant on~$X^i$. If the variety $X$ is flexible and the action of the group $\SAut(X)$ is infinitely transitive on each component~$X^i$, then the same properties hold for the suspension $\Susp(X,f)$.
\end{theorem}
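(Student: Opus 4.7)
The plan is to imitate the proof of Theorem~\ref{suspakz} from~\cite{AKZ-1}, tracking at each step how every $\GG_a$-action on $Y:=\Susp(X,f)$ interacts with the Euclidean connected components of $Y^{\reg}$. I would work with three families of locally nilpotent derivations of $\RR[Y]$: the lifts $\tilde\partial$ of those LNDs $\partial$ on $\RR[X]$ that annihilate $f$ (acting trivially on the coordinates $u,v$); the ``plus'' derivations $E_+^{\partial}$ defined on generators by $E_+^{\partial}(u)=0$, $E_+^{\partial}(v)=\partial(f)$, $E_+^{\partial}(g)=u\partial(g)$ for $g\in\RR[X]$, where $\partial$ runs over all LNDs of $\RR[X]$; and the symmetric ``minus'' derivations $E_-^{\partial}$ with the roles of $u$ and $v$ swapped. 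A direct check shows each of these kills the defining equation $uv-f(x)$ and is locally nilpotent. Together with all their replicas and conjugates these form a saturated set $\cN$ generating a subgroup $G\subseteq\SAut(Y)$. Since $\GG_a(\RR)=\RR$ is path-connected, the exponential of every element of $\cN$ preserves each connected component of $Y^{\reg}$ in the Euclidean topology, so $G$ acts componentwise.

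For flexibility I would argue pointwise at $y_0=(u_0,v_0,x_0)\in Y^{\reg}$, noting that $x_0$ lies in some $X^i$ and is therefore a flexible point of $X$. When $u_0\ne 0$, the lifted derivations $\tilde\partial_j$ already span $T_{x_0}X^i\subseteq T_{y_0}Y$ by flexibility of $X$, while some $E_+^{\partial}$ with $\partial(f)(x_0)\ne 0$ contributes the missing $\partial/\partial v$-direction (such $\partial$ exists because $f$ is non-constant on the infinitely transitively acted upon $X^i$); the case $v_0\ne 0$ is symmetric. The only remaining case is $u_0=v_0=0$, which forces $f(x_0)=0$, and smoothness of $y_0$ then forces $df_{x_0}\ne 0$; choosing $\partial$ with $\partial(f)(x_0)\ne 0$, the pair $E_+^{\partial},E_-^{\partial}$ supplies $\partial/\partial v$ and $\partial/\partial u$ at $y_0$, while the $X$-directions come from the lifts.

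The main step is to promote transitivity of $G$ on each Euclidean component $Y^j\subseteq Y^{\reg}$ to infinite transitivity on that component. Given two points $y_1,y_2\in Y^j$, I would first drive the projection of $y_1$ to that of $y_2$ inside the corresponding $X^i$ via a sequence of $E_+^{\partial}$-flows, invoking the hypothesis that $\SAut(X)$ acts infinitely transitively on $X^i$ to realize the required $X$-component of the motion; once the base points coincide at some $x$, two further flows (one $E_+^{\partial}$ and one $E_-^{\partial}$ along well-chosen replicas) match the remaining fibre coordinates within $\{uv=f(x)\}\cap Y^j$, with sign constraints enforced automatically by the componentwise action. The passage from transitivity to infinite transitivity on each $Y^j$ then proceeds by adapting the saturated-set machinery of Theorem~\ref{ccbb}: given any finite $Z\subseteq Y^j$, one selects replicas inside $\cN_Z$ whose values still generate the tangent space at every smooth point of $Y^j\setminus Z$, and iterates as in the proof of Theorem~\ref{ccaa}. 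The principal obstacle is this last real-analytic orbit-separation step: the algebraically closed-field arguments of \cite[Theorem~2.5]{AFKKZ-1} rely on Zariski-generic choices of replicas, which must be replaced by a real analogue that simultaneously interpolates the vanishing conditions needed to fix $Z$ pointwise and keeps the induced motion inside the prescribed component $Y^j$. The hypothesis of infinite transitivity of $\SAut(X)$ on each~$X^i$ is precisely what supplies a sufficiently rich pool of invariant real regular functions on $X^i$ to carry out such interpolation and thereby complete the argument.
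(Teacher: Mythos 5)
The survey states this result only as a citation of \cite[Theorem~1]{KM} and contains no proof of it, so there is nothing in the paper to compare against step by step; your sketch has to stand on its own. Its skeleton is the right one and follows the strategy of Theorem~\ref{suspakz}: the derivations $E_{\pm}^{\partial}$, the lifts of locally nilpotent derivations annihilating $f$, and the key observation that a real flow $\exp(t\partial)$, $t\in\RR$, preserves every Euclidean connected component of $\Susp(X,f)^{\reg}$. But two steps do not go through as written. In the flexibility argument you let ``the lifts supply the $X$-directions''. A lift $\tilde\partial$ exists only when $\partial(f)=0$, and the values of such derivations need not span $T_{x_0}X$ (take $X=\AA^2$, $f=x_1$: every locally nilpotent derivation annihilating $f$ has value proportional to $\partial/\partial x_2$ at every point). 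When $u_0\neq 0$ this is repairable, since $E_+^{\partial}$ itself has $X$-component $u_0\,\partial(x_0)$ and flexibility of $X$ applies to the full family; but at a point with $u_0=v_0=0$ both $E_{\pm}^{\partial}$ have vanishing $X$-component, the lifts land inside $\ker df_{x_0}$ without any reason to span it, and your argument genuinely breaks. One must either prove transitivity on each component first and then verify flexibility at a single well-chosen point, or exhibit further vector fields.

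The second and more serious gap is that the real content of the theorem is exactly the step you defer. You need transitivity of the constructed group on each connected component of $\Susp(X,f)^{\reg}$, which requires (i) identifying those components in terms of the components $X^i$ and the sign behaviour of $f$, and (ii) matching the fibre coordinates inside the correct branch of the real hyperbola $\{uv=f(x_0)\}$, which is disconnected when $f(x_0)\neq 0$; your flows $E_{\pm}^{\partial}$ move the base point whenever $\partial(f)\neq 0$, so ``two further flows along well-chosen replicas'' is not yet an argument that the fibre can be adjusted while staying in the prescribed component. Likewise, the closing claim that infinite transitivity of $\SAut(X)$ on each $X^i$ ``supplies a sufficiently rich pool of invariant real regular functions'' to run a real analogue of Theorem~\ref{ccbb} is an assertion of the conclusion, not a proof: over $\RR$ the generic-choice arguments of \cite[Theorem~2.5]{AFKKZ-1} must be replaced by explicit interpolation of invariants on a non-closed field together with a component-by-component connectivity analysis, and this is precisely what \cite{KM} has to do. As it stands the proposal is a plausible programme with the hardest twenty percent missing.
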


Let us return to the case of an algebraically closed field of characteristic zero and describe several more classes of flexible affine hypersurfaces. We fix positive integers $n_0, n_1, n_2$ and put ${n\!=\!n_0\!+\!n_1\!+\!n_2}$. For each $i=0,1,2$, we also fix a set of positive integers $l_i\in\ZZ^{n_i}_{>0}$ and define monomials
$$
T_i^{l_i}:= T_{i1}^{l_{i1}}\ldots T_{in_i}^{l_{in_i}} \in \KK[T_{ij} \, ; \, i=0,1,2, \ j=1,\ldots,n_i]. 
$$
A \emph{trinomial} is a polynomial of the form $f=T_0^{l_0}+T_1^{l_1}+T_2^{l_2}$, and a \emph{trinomial hypersurface} $X$ is the set of zeroes of a trinomial $f$
in the affine space $\AA^n$. It is not difficult to verify that $X$ is an irreducible normal affine variety of dimension $n-1$. Among such hypersurfaces there are examples of both rigid and flexible varieties. For example, if for some $i$ we have $n_i=l_{i1}=1$, then the trinomial hypersurface is isomorphic to the affine space $\AA^{n-1}$, and if $n_i=2$ and $l_{i1}=l_{i2}=1$, then the trinomial hypersurface is a suspension over the affine space $\AA^{n-2}$.
In addition to these examples, five sufficiently broad classes of flexible trinomial hypersurfaces are described in~\cite[Theorem~4]{Ga}. For instance, hypersurfaces corresponding to trinomials of the form $T_0^{l_0}+T_1^{l_1}+T_{21}T_{22}\ldots T_{2n_2}$ are flexible. At the same time, in~\cite{Ga} flexibility and rigidity are investigated both for trinomial hypersurfaces and for a wider class of affine trinomial varieties.

\subsection{Affine toric varieties} \label{3.2} 
We recall that a normal algebraic variety $X$ is \emph{toric} if there exists a regular effective action of an algebraic torus $T$ on $X$ with an open orbit. Some affine toric varieties are not flexible. For example, if we put $X=T$, then the algebra $\KK[X]$ is generated by invertible functions, and the group $\SAut(X)$ is trivial.

Let us call a toric variety \emph{non-degenerate} if every invertible regular function on $X$ is constant. This condition is equivalent to the fact that $X$ cannot be represented as a direct product $X'\times\GG_m$ for some toric variety $X'$. 

\begin{theorem}[{\cite[Theorem~2.1]{AKZ-1}}] \label{tna}
Evere non-degenerate affine toric variety is flexible.
\end{theorem}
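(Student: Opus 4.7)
The plan is to apply Corollary~\ref{tr-fl}: it suffices to exhibit $\GG_a$-subgroups of $\SAut(X)$ whose generated group acts transitively on $X^{\reg}$. I would realize these subgroups via the classical Demazure root construction. Write $X = \Spec\KK[\sigma^\vee \cap M]$ for a strongly convex rational polyhedral cone $\sigma \subset N_\QQ$ with dual cone $\sigma^\vee \subset M_\QQ$, where $N$ is the one-parameter subgroup lattice of the torus $T$ and $M$ the dual character lattice. Non-degeneracy of $X$ is equivalent to $\sigma$ being of full dimension $\dim N_\QQ$. For every ray $\rho$ of $\sigma$ with primitive generator $p_\rho$, a \emph{Demazure root} with distinguished ray $\rho$ is an element $e \in M$ satisfying $\langle e, p_\rho\rangle = -1$ and $\langle e, p_{\rho'}\rangle \geq 0$ for every other ray $\rho'$ of $\sigma$. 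Each such $e$ gives a homogeneous locally nilpotent derivation $\partial_e$ of $\KK[X]$ defined on characters by $\partial_e(\chi^m) = \langle m, p_\rho\rangle\, \chi^{m+e}$ and hence a root $\GG_a$-subgroup $H_e \subset \SAut(X)$.

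The first step is to verify that under the non-degeneracy hypothesis every ray $\rho$ of $\sigma$ admits Demazure roots with distinguished ray $\rho$. This follows because strong convexity and full-dimensionality of $\sigma$ ensure that the affine hyperplane $\{\langle -, p_\rho\rangle = -1\}$ meets the interior of the cone dual to the facet of $\sigma^\vee$ spanned by the duals of the remaining rays, producing in fact infinitely many such roots. Consequently, the set $\cN$ of derivations obtained by taking all $\partial_e$ together with all their replicas $\chi^a\partial_e$ (for $a \in \ker\partial_e \cap M$) is invariant under conjugation by $T$ and closed under taking replicas, i.e., saturated in the sense of Subsection~\ref{1.3}.

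Next I would organize the smooth locus by $T$-orbits. The orbits of $T$ on $X$ correspond bijectively to faces $\tau \preceq \sigma$, and the orbit $O_\tau$ is contained in $X^{\reg}$ precisely when $\tau$ is a \emph{smooth face}, i.e. its primitive ray generators extend to a $\ZZ$-basis of $N$. Since $T$ normalizes every $H_e$ (it acts on the root $e$ by its character), it suffices by Corollary~\ref{c121} to check that at a base point $x_\tau \in O_\tau$ chosen for each smooth face $\tau$, the vectors $\partial_e(x_\tau)$ span a subspace complementary to $T_{x_\tau}O_\tau$ inside $T_{x_\tau}X$: combined with the tangent directions along $O_\tau$, which themselves arise from root subgroup replicas whose roots lie in the sublattice parallel to $\tau$, this recovers all of $T_{x_\tau}X$ and yields flexibility. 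A direct local computation in toric monomial coordinates around $x_\tau$ shows that for every ray $\rho'$ of $\sigma$ not lying in $\tau$, any Demazure root $e$ with distinguished ray $\rho'$ yields a nonzero vector $\partial_e(x_\tau)$ transverse to $O_\tau$ pointing towards the divisor $D_{\rho'}$.

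The main obstacle, in my view, is the combinatorial bookkeeping in this last step: one must select Demazure roots for each ray outside $\tau$ so that the resulting tangent vectors are linearly independent modulo $T_{x_\tau}O_\tau$ and, together with the torus orbit directions, fill out $T_{x_\tau}X$. Non-degeneracy enters decisively here, since without full dimensionality of $\sigma$ certain rays admit no Demazure roots at all and the transverse directions cannot be reached by $\GG_a$-actions. Once flexibility at every smooth point is established, Theorem~\ref{tmain} immediately upgrades it to infinite transitivity of $\SAut(X)$ on $X^{\reg}$.
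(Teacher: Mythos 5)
Your overall architecture differs from the paper's: the paper proves that the subgroup generated by the root subgroups acts transitively on $X^{\reg}$ by connecting every smooth $T$-orbit to the open one (Proposition~\ref{pr1}, Lemmas~\ref{l1} and~\ref{l2}) and then invokes the equivalence of transitivity and flexibility, whereas you try to verify the infinitesimal flexibility condition directly at a base point of every smooth orbit. That route is legitimate in principle, but your key claim in the last step is false, and it is false in a way that conceals exactly where the work lies. You assert that for a point $x_\tau\in O_\tau$ (with $\tau\preceq\sigma$) every Demazure root $e$ whose distinguished ray $\rho'$ does \emph{not} lie in $\tau$ gives a nonzero vector $\partial_e(x_\tau)$ transverse to $O_\tau$. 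Take $X=\AA^2$, $\sigma$ the positive quadrant with ray generators $p_1=(1,0)$, $p_2=(0,1)$, and $\tau$ the ray of $p_1$, so that $O_\tau=\{(0,c):c\neq 0\}$ is the punctured $x_2$-axis. The roots with distinguished ray $p_2\not\subseteq\tau$ are $e=(x,-1)$, $x\ge 0$, with $\partial_e=x_1^{x}\tfrac{\partial}{\partial x_2}$; at $(0,c)$ this vanishes for $x\ge 1$ and for $x=0$ it equals $\tfrac{\partial}{\partial x_2}$, which is \emph{tangent} to $O_\tau$, not transverse. The transverse direction $\tfrac{\partial}{\partial x_1}$ is produced by the roots $e=(-1,y)$ whose distinguished ray $p_1$ lies \emph{in} $\tau$, via $\partial_e=x_2^{y}\tfrac{\partial}{\partial x_1}$. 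So you have the two roles exactly swapped: roots with distinguished ray outside $\tau$ supply (some of) the directions \emph{along} the orbit, and roots with distinguished ray inside $\tau$ supply the transverse ones.

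Once the roles are corrected, the genuine difficulty appears: a root $e$ with distinguished ray $p_i\subseteq\tau$ gives a \emph{nonzero} transverse vector at $x_\tau$ only if in addition $\langle p_j,e\rangle=0$ for all the other rays $p_j$ of $\tau$ (this is precisely the $H_e$-connectedness criterion of Lemma~\ref{l1}; compare the origin of $\AA^2$, where among the roots $(-1,y)$ only $(-1,0)$ survives). Likewise, a root with distinguished ray $p_i\not\subseteq\tau$ contributes a nonzero tangent vector along $O_\tau$ only under the analogous vanishing condition. Producing, for each ray of each face $\tau$ whose orbit lies in $X^{\reg}$, a Demazure root that is $-1$ on that ray, $0$ on the remaining rays of $\tau$, and $\ge 0$ on all other rays of $\sigma$ is the actual combinatorial content of the theorem --- it is where smoothness of the point and non-degeneracy of $X$ enter, and it is essentially the paper's Lemma~\ref{l2}. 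Your proposal dismisses this as ``a direct local computation,'' but the computation, done correctly, is exactly the step that needs a proof; as written, the argument does not establish flexibility at any non-generic smooth point.
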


In order to explain the idea of the proof of this result, we consider $T$-normalized $\GG_a$-subgroups and describe such subgroups in terms of so-called Demazure roots.

Let $N$ be the lattice of one-parameter subgroups of the torus $T$ and $M$ be the dual lattice of characters. Let $\langle\,\cdot\,,\,\cdot\,\rangle\colon N\times M\to \ZZ$ be the canonical pairing. We let $\chi^m$ denote the character of the torus $T$ corresponding to a point $m\in M$. In this case we have $\chi^m\chi^{m'}\, =\, \chi^{m+m'}$, so the group algebra
$$
\KK[M] : =  \bigoplus_{m \in M} \KK\chi^m
$$
can be identified with the algebra $\KK[T]$ of regular functions on the torus $T$. Denote by $T . x_0$ the open $T$-orbit on $X$. Since the orbit map $T\to X$,
$t\longmapsto t . x_0$ is dominant, we can identify $\KK[X]$ with a subalgebra in $\KK[M]$. More precisely, there is such a convex polyhedral cone $\sigma^\vee\subseteq M_{\QQ} := M\otimes_{\ZZ}\QQ$ that $\KK[X]$ coincides with the semigroup algebra of the semigroup $\sigma^\vee\cap M$, that is
$$
\KK[X]  =  \bigoplus_{m \in \sigma^\vee \cap M} \KK\chi^m.
$$
The dual cone $\sigma$ to the cone~$\sigma^\vee$ is a strictly convex cone in $N_{\QQ}$. We assume that the variety $X$ is non-degenerate. This means that the cone $\sigma$ does not lie in a proper subspace of the space $N_{\QQ}$. Let $\Xi=\{p_1,\ldots,p_r\}$ be the set of generators on rays, that is, the set of primitive vectors on one-dimensional faces of the cone~$\sigma$. Each vector $p\in N$ is associated with a one-parameter subgroup $R_p$ of the torus~$T$.

\smallskip

There are natural one-to-one correspondences between faces $\delta$ of the cone $\sigma$, the dual faces $\tau=\delta^\bot$ of the cone $\sigma^\vee$, and $T$-orbits $O_\tau$ on $X$, where $\dim O_\tau=\dim\tau=\dim\sigma-\dim\delta$. In particular, the only $T$-fixed point on $X$ corresponds to the vertex of the cone~$\sigma^\vee$, and the open $T$-orbit corresponds to the cone $\sigma^\vee$ itself. These correspondences preserve inclusions: a $T$-orbit $O_\mu$ is contained in the closure $\overline{O_\tau}$ if and only if $\mu\subseteq\tau$ or, equivalently, $\mu^\bot\supseteq\tau^\bot$.

Every face $\tau\subseteq\sigma^\vee$ defines a decomposition
$$
\KK[X] \ =\KK[\overline{O_\tau}]\oplus I(\overline{O_\tau}),
$$
where
$$
\KK[\overline{O_\tau}]=\bigoplus_{m \in\tau\cap M} \KK\chi^m,\qquad\mbox{and}\qquad I(\overline{O_\tau})=\bigoplus_{m \in (\sigma^\vee\setminus \tau) \cap M} \KK\chi^m
$$
is the ideal of the subvariety $\overline{O_\tau}$ in $\KK[X]$.

The stabilizer $T_{p}$ of a point $p\in X$ is connected, that is, $T_{p}\subseteq T$ is a subtorus. Further, we have an inclusion $T_p\subseteq T_q$ exactly when $T.q\subseteq\overline{T.p}$, and ${\overline{T.p}=X^{T_p}}$, where $X^{G}$ denotes the set of fixed points of an action of a group $G$ on a set~$X$.

\begin{definition}[{\cite{De}}]
A \emph{Demazure root} of a cone $\sigma$ is a vector ${e\in M}$ such that for some $i$ with $1\le i\le r$, where $r$ is the number of elements in $\Xi$, we have
$$
\langle p_i, e\rangle \, = \, -1 \quad \text{and} \quad \langle p_j, e\rangle\ge 0 \quad \text{for all} \quad j\ne i.
$$
\end{definition}

Denote by ${\mathcal R}(\sigma)$ the set of all Demazure roots of a cone $\sigma$. There is a one-to-one correspondence $e\longleftrightarrow H_e$ between roots of the cone $\sigma$ and $\GG_a$-subgroups in $\Aut(X)$ normalized by the acting torus, see ~\cite{De,Li-1}. Such subgroups are called \emph{root subgroups}.

We put $p_e := p_i$. A root $e\in {\mathcal R}(\sigma)$ defines a locally nilpotent derivation $\partial_e$ on the $M$-graded algebra $\KK[X]$ given as
$$
\partial_e(\chi^m) \ = \ \langle p_e,m \rangle \chi^{m+e}.
$$
Its kernel is a finitely generated homogeneous subalgebra in $\KK[X]$:
$$
\ker\partial_e=\bigoplus_{m\in p_e^\bot\cap M}\KK\chi^m,
$$
where $p_e^\bot = \{m\in\sigma^\vee\cap M, \, \langle p_e,m\rangle=0\}$ is a facet, i.e. a face of codimension one, of the cone $\sigma^\vee$ orthogonal to~$p_e$.

Two roots $e$ and $e'$ such that $p_e=p_{e'}$ are called {\em equivalent}; we write $e\sim e'$. The roots $e$ and $e'$ are equivalent if and only if
$\ker\partial_e=\ker\partial_{e'}$.

A numbering of the generators on the rays $\Xi=\{p_1,\ldots,p_r\}$ determines the partition
$$
{\mathcal R}(\sigma)=\bigcup_{i=1}^r {\mathcal R}_i,
\quad\mbox{where}\quad {\mathcal R}_i =\{e\in {\mathcal
R}(\sigma)\,\vert\,p_e=p_i\}\,
$$
are nonempty. Moreover, these subsets are infinite if the dimension of the cone is~$\ge 2$. 

As an example, we consider the affine plane $X=\AA^2$ with the standard action of a two-dimensional torus. Here the cones $\sigma$ and $\sigma^\vee$ coincide with the positive quadrants. The set ${\mathcal R}(\sigma)$ consists of two equivalence classes
$$
{\mathcal R}_1=\{(-1,y)\,\vert\, y\in\ZZ_{\ge 0}\}\quad\mbox{and}\quad
 {\mathcal R}_2=\{(x,-1)\,\vert\, x\in\ZZ_{\ge 0}\}.
$$

\smallskip

The derivation $\partial_e$ generates a $\GG_a$-subgroup
$$
H_e=\lambda_e(\KK) \subseteq\Aut(X),
$$
where $\lambda_e\colon t\longmapsto \exp(t\partial_e)$. The algebra of invariants $\KK[X]^{H_e}$ coincides with $\ker\partial_e$. The embedding $\KK[X]^{H_e}\subseteq\KK[X]$ induces a morphism $\pi\colon X\to Z=\Spec\KK[X]^{H_e}$ whose typical fibers are one-dimensional $H_e$-orbits isomorphic to the line~$\AA^1$. The torus $T$ normalizes the subgroup $H_e$. In particular, $T$ leaves the set of fixed points $X^{H_e}$ invariant.

Let $R_e=R_{p_e}\subseteq T$ be the one-dimensional subtorus corresponding to the vector ${p_e\in N}$. The action of $R_e$ on the graded algebra $\KK[X]$ with a suitable parametrization $p_e:\GG_{\text{m}}(\KK)\ni t\longmapsto p_e(t)\in R_e$ is given by 
$$
t.\chi^m \ = \ t^{\langle p_e,m \rangle} \chi^{m},\quad t\in \GG_{\text{m}}(\KK).
$$

In particular, we have $\KK[X]^{R_e}=\KK[X]^{H_e}$. Therefore, the morphism ${\pi:X\to Z}$ coincides with the quotient morphism $X\to X/\!/R_e$, and typical $H_e$-orbits coincide with the closures of typical $R_e$-orbits. This statement holds for any one-dimensional $H_e$-orbit.

We have a decomposition
$$
\KK[X]=\KK[X]^{R_e} \ \oplus \ \bigoplus_{m \in \sigma^\vee\cap M \setminus p_e^\bot} \KK \chi^m =\KK[X]^{R_e}\oplus I(D_e),
$$
where $D_e:=X^{R_e}\cong Z$. The divisor $D_e$ coincides with the set of limit points of the action of $R_e$ on $X$. Thus, each one-dimensional $R_e$-orbit has a limit point on $D_e$.

\begin{proposition}[{\cite[Proposition~2.1]{AKZ-1}}]
\label{pr1}
Let $e\in {\mathcal R}(\sigma)$ and $H_e\subseteq\SAut (X)$ be the associated $\GG_a$-subgroup. Then the following statements hold. 
\begin{enumerate}\item[(a)]
For any point $x\in X \setminus X^{H_e}$ the orbit $H_e. x$ intersects precisely two $T$-orbits $\Of_1$ and $\Of_2$ on $X,$ and $\dim \Of_1 = \dim \Of_2 +1$.
\item[(b)] 
The intersection $\Of_2\cap H_e. x$ consists of a unique point, while
$$
\Of_1\cap H_e. x = R_e. y,\quad  y \in \Of_1\cap H_e. x.
$$\end{enumerate}
\end{proposition}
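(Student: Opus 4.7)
The plan is to analyze $H_e.x$ through the subgroup $G_e:=R_e\cdot H_e\subseteq\Aut(X)$, which is well defined because $T$ normalizes $H_e$ (from $\partial_e(\chi^m)=\langle p_e,m\rangle\chi^{m+e}$ one deduces that $R_e$ acts on $\KK\partial_e$ by a nontrivial character $t\mapsto t$), and is therefore isomorphic to the affine group $\Aff(\AA^1)=\GG_m\ltimes\GG_a$.

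The first key step is to show $G_e.x=H_e.x$, i.e.\ that $H_e.x$ is $R_e$-invariant. For this I would use the categorical quotient
$$
\pi\colon X\longrightarrow Z=\Spec\KK[X]^{H_e}=\Spec\KK[X]^{R_e}
$$
whose fibers on a toric $X$ are all one-dimensional (combinatorially $Z$ corresponds to the projection of $\sigma$ along the ray $\RR_{\ge 0}\cdot p_e$). Since $G_e.x$ lies in the fiber $\pi^{-1}(\pi(x))$ and contains the irreducible curve $H_e.x$, the two orbits coincide. Then $H_e.x\cong G_e/G_{e,x}$; the stabilizer $G_{e,x}$ has dimension one and meets $H_e$ in a finite subgroup (because $x\notin X^{H_e}$), so its identity component is conjugate to $R_e$ inside $\Aff(\AA^1)$. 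By the structure of $\Aff(\AA^1)$-homogeneous spaces, the subgroup $R_e\subseteq G_e$ has a unique fixed point $y_0\in H_e.x$, and $H_e.x\setminus\{y_0\}$ is a single $R_e$-orbit.

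Part~(a) now follows quickly: since $R_e\subseteq T$, every $T$-orbit decomposes into $R_e$-orbits, so $H_e.x$ meets at most two $T$-orbits, namely $\Of_2:=T.y_0$ and $\Of_1:=T.y$ for any $y\in H_e.x\setminus\{y_0\}$. These are distinct because $R_e\subseteq T_{y_0}$ whereas $R_e\cap T_y$ is finite. For the dimension equality $\dim\Of_1=\dim\Of_2+1$, I would invoke the orbit--face correspondence: writing $\Of_i=O_{\tau_i}$, the restriction $\chi^m|_{H_e.x}$ is $R_e$-weight-homogeneous, hence (in the coordinate on $H_e.x$ vanishing at $y_0$) a monomial of degree $\langle p_e,m\rangle$; this forces $\tau_2=\tau_1\cap p_e^{\perp}$. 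Since $\Of_1\not\subseteq D_e$ (otherwise $R_e$ would act trivially on $\Of_1$, contradicting the fact that $R_e$ moves points of $H_e.x\setminus\{y_0\}$), we have $\tau_1\not\subseteq p_e^{\perp}$, so $\tau_2$ is a facet of $\tau_1$. Part~(b) is then immediate from the $R_e$-orbit structure of $H_e.x$: $\Of_2\cap H_e.x=\{y_0\}$, and $\Of_1\cap H_e.x=H_e.x\setminus\{y_0\}=R_e.y$ for any $y$ in this set.

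The main obstacle is the very first step, the identity $G_e.x=H_e.x$, which relies on the one-dimensionality of all fibers of $\pi$; for general $\GG_m$-actions on an affine variety this may fail, but on a non-degenerate affine toric $X$ it follows from the explicit combinatorial description of $\pi$, or alternatively from the pointwise relation $\partial_e=\chi^e\cdot\xi_{p_e}$ between $\partial_e$ and the fundamental vector field of $R_e$ on the open torus orbit. Once this step is granted, everything else is a direct comparison of the $\Aff(\AA^1)$-homogeneous-space structure on $H_e.x$ with the toric stratification of $X$.
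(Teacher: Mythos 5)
Your overall strategy --- analyze $H_e.x$ through the normalizing one-parameter subgroup $R_e$ and the group $R_e\ltimes H_e\cong\Aff(\AA^1)$, locate the unique $R_e$-fixed point, and read off the two $T$-orbits from the weight decomposition --- is essentially the route of \cite{AKZ-1}, where the identities $\KK[X]^{H_e}=\KK[X]^{R_e}$, $D_e=X^{R_e}$, and the fact that every one-dimensional $H_e$-orbit is the closure of an $R_e$-orbit with limit point on $D_e$ carry the proof. But your justification of the crucial first step fails. It is not true that all fibers of $\pi\colon X\to Z$ are one-dimensional: take $X$ the affine cone over a quadric surface, i.e.\ $\sigma^\vee=\cone((1,0,1),(0,1,1),(-1,0,1),(0,-1,1))$, with $p_e=(-1,-1,1)$ and $e=(1,1,1)$; then $Z\cong\AA^2$ and the fiber of $\pi$ over the image of the $T$-fixed point is the closure of the two-dimensional $T$-orbit $O_\mu$, $\mu=\cone((-1,0,1),(0,-1,1))$. (The proposition survives because $O_\mu\subseteq X^{H_e}$, but your stated reason does not.) Your fallback, the relation $\partial_e=\chi^e\,\xi_{p_e}$, is valid only on the open torus orbit, where $\chi^e$ is regular (note $\langle p_e,e\rangle=-1$, so $e\notin\sigma^\vee$); yet the proposition is needed precisely for $x$ in the boundary orbits --- that is how it feeds into Lemma~\ref{l2} and the transitivity argument. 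The correct way to obtain the $R_e$-invariance of $H_e.x$ for \emph{every} $x\notin X^{H_e}$ is the explicit formula $\chi^m(\exp(s\partial_e).x)=\sum_{k=0}^{\langle p_e,m\rangle}\binom{\langle p_e,m\rangle}{k}\chi^{m+ke}(x)\,s^k$ together with the conjugation identity $t\exp(s\partial_e)t^{-1}=\exp(\chi^e(t)s\,\partial_e)$, which exhibit the $R_e$-action on the parametrized orbit directly.

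There is a second gap, which you do not flag, in the dimension count of part~(a). From $\tau_1\not\subseteq p_e^{\perp}$ you conclude that $\tau_2=\tau_1\cap p_e^{\perp}$ is a \emph{facet} of $\tau_1$; but a face of a cone cut out by a single supporting hyperplane need not have codimension one. In the example above, $\tau_1=\mu$ satisfies $\mu\not\subseteq p_e^{\perp}$ while $\mu\cap p_e^{\perp}=\{0\}$ has codimension two in $\mu$. Thus the equality $\dim\Of_1=\dim\Of_2+1$ requires a second, more substantial use of the hypothesis $x\notin X^{H_e}$ (equivalently, of the existence of $m\in\sigma^\vee\cap M$ with $\langle p_e,m\rangle\ge 1$ and $m+e\in\tau_1$); this is exactly the facet condition built into the characterization of $H_e$-connected pairs in Lemma~\ref{l1}, and in your write-up it is asserted rather than derived.
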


We say that a pair of $T$-orbits $(\Of_1, \Of_2)$ on $X$ is {\it$H_e$-connected} if $H_e . x\subseteq \Of_1\cup\Of_2$ for some $x\in X\setminus X^{H_e}$. Proposition~\ref{pr1} shows that for such a pair (up to permutation) we have $\Of_2\subseteq\overline{\Of_1}$ and $\dim\Of_1=\dim\Of_2+1$. Here we choose the point~$x$ on the orbit ~$\Of_1$. Since the torus normalizes the subgroup $H_e$, any point on $\Of_1$ can serve as the point $x$ from the definition of $H_e$-connectedness.

Proposition~\ref{pr1} implies the following criterion of $H_e$-connectedness.

\begin{lemma}[{\cite[Lemma~2.2]{AKZ-1}}] \label{l1}
Let $(\Of_1,\Of_2)$ be a pair of $T$-orbits on $X$ with ${\Of_2\subseteq\overline{\Of_1},}$ where $\Of_i=\Of_{\sigma_i^\bot}$ for a face $\sigma_i$ of the cone $\sigma,$
$i=1,2$. For a given root $e\in {\mathcal R}(\sigma)$ a pair $(\Of_1,\Of_2)$ is $H_e$-connected if and only if we have $e\vert_{\sigma_2}\!\le\! 0$ and $\sigma_1$ is a facet of the cone $\sigma_2$ given by the equation $\langle v,e\rangle=0$.
\end{lemma}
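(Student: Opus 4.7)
My plan is to translate $H_e$-connectedness into combinatorial conditions on the cones by combining Proposition~\ref{pr1} with the explicit action formula
\[
\exp(t\partial_e)(\chi^m)=\sum_{k=0}^{\langle p_e,m\rangle}\binom{\langle p_e,m\rangle}{k}t^k\chi^{m+ke},
\]
while using that the $T$-orbits contained in the fixed divisor $D_e=X^{R_e}$ correspond exactly to the faces of $\sigma$ containing $p_e$. For the forward direction, assuming $(\Of_1,\Of_2)$ is $H_e$-connected with $\Of_1$ the higher-dimensional orbit, Proposition~\ref{pr1}(a) gives $\dim\sigma_2=\dim\sigma_1+1$, so $\sigma_1$ is a facet of $\sigma_2$. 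By Proposition~\ref{pr1}(b), the unique point of $H_e.x\cap\Of_2$ is the $R_e$-fixed boundary point of the affine line $H_e.x$ (complementing the open $R_e$-orbit $R_e.y\subseteq\Of_1$), so $\Of_2\subseteq D_e$ while $\Of_1\not\subseteq D_e$; the orbit-cone correspondence then yields $p_e\in\sigma_2\setminus\sigma_1$.

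Next I deduce $e|_{\sigma_1}=0$: since $x\notin X^{H_e}$, there exists $m\in\sigma^\vee\cap M$ with $\langle p_e,m\rangle>0$ and $m+e\in\sigma_1^\bot$, which forces $\langle v,e\rangle=-\langle v,m\rangle\le 0$ for every $v\in\sigma_1$; combined with the Demazure inequality $\langle p_j,e\rangle\ge 0$ valid on every generator of $\sigma_1$ (using $p_e\notin\sigma_1$), this gives the equality. Finally, because $\sigma_1$ is a facet of $\sigma_2$ and $e$ vanishes on $\Lin(\sigma_1)$, the restriction of $e$ to $\Lin(\sigma_2)$ is proportional to the linear functional cutting out $\sigma_1$ inside $\sigma_2$, with negative constant because $\langle p_e,e\rangle=-1$ and $p_e\in\sigma_2\setminus\sigma_1$. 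This yields $e|_{\sigma_2}\le 0$ together with $\sigma_1=\{v\in\sigma_2:\langle v,e\rangle=0\}$.

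For the backward direction, I assume the combinatorial conditions. Since $\sigma_1$ is a proper facet of $\sigma_2$ cut out by $e$, some extremal ray of $\sigma_2$ must carry a negative $e$-value, and only $p_e$ qualifies under the Demazure constraint; hence $\sigma_2=\sigma_1+\RR_{\ge 0}p_e$. For generic $x\in\Of_1$, the character formula combined with $e|_{\sigma_1}=0$ shows that $\chi^m(\exp(t\partial_e).x)$ vanishes identically in $t$ whenever $m\notin\sigma_1^\bot$, placing $H_e.x$ inside $\overline{\Of_1}$. Characters $m\in\sigma_1^\bot$ with $\langle p_e,m\rangle>0$ do exist (they are the defining functions of $\Of_2\subsetneq\overline{\Of_1}$), so $x\notin X^{H_e}$ for generic $x$. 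Proposition~\ref{pr1}(a) then singles out a unique codimension-one second orbit $\Of'\subseteq\overline{\Of_1}$ meeting $H_e.x$, and rerunning the $R_e$-fixed-point argument places $p_e$ in the face $\sigma'$ corresponding to $\Of'$; the combination $\sigma_1\subseteq\sigma'$, $\dim\sigma'=\dim\sigma_1+1$, and $p_e\in\sigma'$ forces $\sigma'=\sigma_1+\RR_{\ge 0}p_e=\sigma_2$, so $\Of'=\Of_2$.

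The step I expect to be most delicate is the derivation of $e|_{\sigma_1}=0$, which requires producing a character $m$ witnessing $x\notin X^{H_e}$ with the correct sign conditions and meshing it cleanly with Demazure positivity on the generators of $\sigma_1$; the remaining bookkeeping between $M$-side character identities and the dual $N$-side facet structure of $\sigma_2$ must also be handled with care to keep the roles of $\sigma_1$, $\sigma_2$, and $p_e$ distinct.
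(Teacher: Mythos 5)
Your proof is correct, and it follows exactly the route the survey indicates: the survey gives no proof of this lemma (it only cites \cite[Lemma~2.2]{AKZ-1} and remarks that the criterion is implied by Proposition~\ref{pr1}), and your argument is a complete and accurate implementation of that derivation via the orbit--cone correspondence, the identification of the $T$-orbits in $D_e=X^{R_e}$ with faces containing $p_e$, and the explicit formula for $\exp(t\partial_e)$ on characters. The step you flagged as delicate --- extracting $e|_{\sigma_1}=0$ from a character $m$ with $\langle p_e,m\rangle>0$ and $m+e\in\sigma_1^\bot$, combined with Demazure positivity on the ray generators of $\sigma_1$ --- is handled correctly.
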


The following lemma is a key technical result.

\begin{lemma}[{\cite[Lemma~2.3]{AKZ-1}}] \label{l2} 
For any point $x\in X^{\reg} \setminus \Of_{\sigma^\vee}$ there is a root $e\in {\mathcal R}(\sigma)$ such that
$$
\dim T.y \ > \ \dim T. x
$$
for a generic point $y\in H_e.x$. In particular, the pair $(T.y,T. x)$ is $H_e$-connected.
\end{lemma}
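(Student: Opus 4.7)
The plan is to identify $T.x$ with the orbit attached to a smooth face of $\sigma$, construct a Demazure root tailored to that face, and then apply Lemma~\ref{l1} together with Proposition~\ref{pr1} to move $x$ onto a $T$-orbit of larger dimension.

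First I would record that $T.x=\Of_{\delta^\bot}$ for a unique face $\delta$ of $\sigma$, and that $x\notin\Of_{\sigma^\vee}$ forces $\delta\ne\{0\}$. Since $x\in X^\reg$, the face $\delta$ is smooth in the sense of toric geometry: after relabelling we may assume its primitive ray generators $p_1,\ldots,p_k\in\Xi$ form part of a $\ZZ$-basis of $N$. Let $\delta'\subsetneq\delta$ be the facet spanned by $p_2,\ldots,p_k$ (with $\delta'=\{0\}$ if $k=1$); this is again a face of $\sigma$.

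Second, I would build a Demazure root $e\in{\mathcal R}(\sigma)$ with $p_e=p_1$ and $\langle p_r,e\rangle=0$ for $r=2,\ldots,k$. Extend $p_1,\ldots,p_k$ to a $\ZZ$-basis $(p_1,\ldots,p_k,q_{k+1},\ldots,q_n)$ of $N$ and set $e_0=-p_1^\ast\in M$, the negative of the corresponding dual basis vector. Then $\langle p_1,e_0\rangle=-1$ and $\langle p_r,e_0\rangle=0$ for $r\ge 2$, but the values $\langle p_j,e_0\rangle$ on the remaining rays $p_j\in\Xi\setminus\{p_1,\ldots,p_k\}$ may well be negative. Choose a lattice point $m\in M$ in the relative interior of the dual face $\delta^\bot\cap\sigma^\vee$; such $m$ exists by density of $M$ in $M_\QQ$, vanishes on all of $\delta$, and is strictly positive on every ray of $\sigma$ not contained in $\delta$. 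For a sufficiently large positive integer $c$ the vector $e:=e_0+cm$ then satisfies $\langle p_1,e\rangle=-1$, $\langle p_r,e\rangle=0$ for $r=2,\ldots,k$, and $\langle p_j,e\rangle\ge 0$ for every other ray, whence $e\in{\mathcal R}(\sigma)$.

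Third, by construction $e$ is nonpositive on $\delta$ and its zero locus inside $\delta$ equals the facet $\delta'$, so Lemma~\ref{l1} implies that the pair $(\Of_{(\delta')^\bot},\Of_{\delta^\bot})$ is $H_e$-connected. Since $T$ normalizes $H_e$ and acts transitively on $\Of_{\delta^\bot}$, the existence of at least one moving point on $\Of_{\delta^\bot}$ supplied by $H_e$-connectedness forces every point of $\Of_{\delta^\bot}$ to be moved by $H_e$; in particular $x\notin X^{H_e}$. Proposition~\ref{pr1}, applied with $\Of_2=\Of_{\delta^\bot}$ and $\Of_1=\Of_{(\delta')^\bot}$, then shows that the one-dimensional orbit $H_e.x$ meets $\Of_{\delta^\bot}$ only at $x$ and lies in $\Of_{(\delta')^\bot}$ everywhere else. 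Consequently a generic $y\in H_e.x$ satisfies $T.y=\Of_{(\delta')^\bot}$, so $\dim T.y=\dim T.x+1$ and $(T.y,T.x)$ is the $H_e$-connected pair furnished above.

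The main obstacle is the Demazure root construction in the second step. Smoothness of $\delta$ is what permits the completion of $p_1,\ldots,p_k$ to a $\ZZ$-basis of $N$, which in turn places $e_0=-p_1^\ast$ in the lattice $M$ and ensures that adding $cm$ alters values only on rays outside $\delta$. Absent smoothness the straightforward dual-basis choice breaks down, and a more elaborate construction would be required.
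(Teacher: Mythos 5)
Your proposal is correct and follows the same route as the cited proof of \cite[Lemma~2.3]{AKZ-1}: smoothness of $x$ makes the face $\delta$ with $T.x=\Of_{\delta^\bot}$ a regular cone, one builds a Demazure root $e$ with $p_e=p_1$ vanishing on the remaining rays of $\delta$ (correcting $-p_1^{\ast}$ by a large multiple of a lattice point in the relative interior of $\delta^\bot\cap\sigma^\vee$), and Lemma~\ref{l1} together with Proposition~\ref{pr1} and the $T$-normalization of $H_e$ yields the jump in orbit dimension. No gaps; the only compressed step, that $H_e.x$ connects the same pair of orbits as the $H_e$-orbit witnessing connectedness, is justified exactly as you indicate by translating by an element of $T$.
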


From this lemma it is not difficult to deduce the transitivity of the action of the group $\SAut(X)$ on the smooth locus of a non-degenerate affine toric variety $X$, which leads to the proof of Theorem~\ref{tna}.

\begin{remark}
Already in the case of affine toric surfaces, we can give examples of flexible varieties $X$ such that the smooth locus $X^{\reg}$ is not a homogeneous space for any algebraic group, see~\cite[Example~2.2]{AKZ-1}.
\end{remark}

To study actions of $\GG_a$-subgroups, the following generalization of the notion of a root subgroup on a toric variety is very useful.

\begin{definition}
Let $X$ be an algebraic variety with an action of an algebraic torus~$T$. A $\GG_a$-subgroup in $\Aut(X)$ is called a \emph{root subgroup} if it is normalized by the torus~$T$. In this situation, the character $e$ of the torus $T$ by which $T$ acts by conjugation on the $\GG_a$-subgroup is called the \emph{root} of the corresponding subgroup:
$$
ts(a)t^{-1}=s(\chi^e(t)a), \quad t\in T, \quad a\in\KK.
$$
\end{definition} 

Many research projects are devoted to the study of root subgroups of automorphism groups or, equivalently, homogeneous locally nilpotent derivations of graded algebras. This topic deserves a special survey, and we will not consider it in detail here.

\smallskip

Finally, we note that in the recent paper~\cite{BG}, flexibility and rigidity criteria for non-normal affine toric varieties are obtained.

\subsection{Homogeneous and almost homogeneous varieties} Following~\cite{Po2}, we consider a class of connected linear algebraic groups that are generated by their $\GG_a$-subgroups. It is not difficult to see that this class coincides with the class of connected linear algebraic groups that do not admit nontrivial characters. In turn, these are exactly the groups that are semidirect products of a connected semisimple group and a unipotent radical. If such a group $G$ acts on a variety $X$, then the image of $G$ in $\Aut(X)$ is contained automatically in the group $\SAut(X)$. In particular, if $G$ acts on the smooth locus of a variety~$X$ transitively, then $X$ is flexible.

As an example of such a situation, we consider a simple finite-dimensional module $V$ of a semisimple group $G$ and let $X$ be equal to the closure of the orbit of a highest weight vector.
It is known from~\cite{VP} that the variety $X$ consists of two orbits, an open orbit and a fixed point coinciding with the origin. If the variety $X$ does not coincide with the space $V$, the origin is a singular point of the variety $X$. This shows that $X$ is flexible. The variety $X$ is a normal affine cone over the generalized flag variety $G/P$, where $P$ is a parabolic subgroup of the group $G$. The flexibility of such cones was first proved in~\cite[Theorem~1.1]{AKZ-1}.

Also, the above proposition can be applied to homogeneous spaces $G/H$ of groups $G$ without nontrivial characters. It is well known that for a semisimple group $G$ such a homogeneous space is affine if and only if the subgroup $H$ is reductive~\cite[Theorem~4.17]{VP-1}. Along the way we obtain many important examples of smooth flexible varieties \cite[Proposition~5.4]{AFKKZ-1}. For instance, such examples are homogeneous spaces of the groups $\SL_n$, $\SO_n$ and $\Sp_{2n}$.

\smallskip

Now we suppose that a connected linear algebraic group $G$ acts on an irreducible variety $X$ with an open orbit. In this situation, the variety $X$ is called \emph{almost homogeneous}. It turns out that under certain conditions it is possible to guarantee the flexibility of such varieties.

\subsubsection{The smooth case.} Suppose that an almost homogeneous affine variety $X$ is smooth. In the case of a semisimple group $G$, using Luna's Slice Theorem, we show in~\cite[Theorem~5.6]{AFKKZ-1} that $X$ is a homogeneous variety with respect to a larger group which is a semidirect product $G\rightthreetimes V$, where $V$ is a finite-dimensional $G$-module. Since the group $G\rightthreetimes V$ has no characters, the variety~$X$ is flexible.

In the case of an arbitrary reductive group $G$ under the assumption that there is no non-constant invertible regular function on the variety, a similar result is proved in~\cite[Theorem~2]{GS}.

\subsubsection{$\SL_2$-embeddings.} Suppose that the group $\SL_2$ acts with an open orbit on a normal affine three-dimensional variety~$X$. The classification of such varietiess was obtained by Popov~\cite{Po-5} in the early 70s. In this situation, the stabilizer of a general position for the action of the group $\SL_2$ is finite. If such a stabilizer is not commutative, the variety turns out to be $\SL_2$-homogeneous, and we come to the situation considered above. In the remaining cases, the stabilizer of a general position is cyclic, and two cases are possible here. In the first case, the variety consists of no more than two $\SL_2$-orbits, it is smooth and therefore flexible.
In the second case, the variety $X$ consists of three orbits, one is a fixed point, and it is the only singular point on $X$. In~\cite{BH} it is shown that $X$ is realized as a categorical quotient of the affine hypersurface $x_0^b=x_1 x_4-x_2 x_3$ by the action of a one-dimensional diagonalizable group. This hypersurface is a suspension over $\AA^3$. Using this circumstance, in~\cite[Theorem~5.7]{AFKKZ-1} we construct a $\GG_a$-subgroup in $\Aut(X)$ that connects three-dimensional and two-dimensional $\SL_2$-orbits on $X$, and thereby prove the flexibility of the variety $X$.

As a result, we obtain that any normal affine variety with an action of the group $\SL_2$ with an open orbit is flexible.

\subsubsection{Horospherical and spherical varieties} We recall that a normal algebraic variety $X$ on which a connected reductive group $G$ acts with an open orbit is \emph{horospherical} if the stabilizer in $G$ of a point in the open orbit contains a maximal unipotent subgroup of~$G$. Affine horospherical varieties were studied in detail in~\cite{VP}.

The flexibility of affine horospherical varieties of a semisimple group $G$ is proved in~\cite{Sh}. The idea of the proof is to construct $\GG_a$-subgroups in $\Aut(X)$ that connect different $G$-orbits on $X$ consisting of smooth points. These subgroups are realized as exponents of locally nilpotent derivations whose kernels, as in the toric case, correspond to the faces of a cone associated with a horospherical variety.

In the case of an arbitrary reductive group $G$, the flexibility of affine horospherical varieties without non-constant invertible regular functions is proved in~\cite[Theorem~3]{GS}. In particular, a new proof of the flexibility of non-degenerate affine toric varieties is obtained here.

\smallskip

A natural generalization of horospherical varieties are spherical varieties. Recall that a normal algebraic variety $X$ with an action of a reductive group $G$ is called \emph{spherical} if the restriction of the action to a Borel subgroup $B$ has an open orbit in $X$. Spherical varieties are considered as the most adequate analogue of toric varieties in the case of actions of noncommutative reductive groups. There is also a combinatorial, albeit much more complex description of spherical varieties in terms of so-called colored cones.

The following conjecture in formulates in~\cite[Section~6]{AFKKZ-2}.

\begin{conjecture}
Any affine spherical variety without non-constant invertible regular functions is flexible.
\end{conjecture}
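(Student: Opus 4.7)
The plan is to extend the orbit-connection strategy used for non-degenerate affine toric varieties (Subsection~\ref{3.2}) and for affine horospherical varieties by Gaifullin--Shafarevich to the full spherical setting, working within the Luna--Vust combinatorial framework.

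First I would reduce to the case where $G$ is semisimple. The hypothesis that $\KK[X]$ contains no non-constant invertible functions implies that the weight monoid of $B$-eigenvectors in $\KK[X]$ has trivial group of units, so the Luna--Vust colored cone of $X$ is strictly convex and $X$ has a unique closed $G$-orbit, namely a single fixed point at the apex. Since $G$ has no characters, its image in $\Aut(X)$ lies in $\SAut(X)$. As $X$ has only finitely many $G$-orbits, parametrized by colored faces, it suffices to construct, for every non-open $G$-orbit $\mathcal{O}$ whose closure meets $X^\reg$, a $\GG_a$-subgroup in $\SAut(X)$ whose generic orbit meets both $\mathcal{O}$ and a strictly larger $G$-orbit. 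An argument analogous to Lemma~\ref{l2} then yields transitivity of an algebraically generated subgroup of $\SAut(X)$ on $X^\reg$, which by Corollary~\ref{tr-fl} gives flexibility.

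To construct the required $\GG_a$-subgroups I would use horospherical degeneration. By a theorem of Popov--Vinberg--Grosshans, $X$ admits a $G$-equivariant flat degeneration to an affine horospherical variety $X_0$ with the same weight monoid; this $X_0$ is flexible by the Gaifullin--Shafarevich theorem, with flexibility witnessed by an explicit family of root subgroups $H_{e,0}\subseteq\SAut(X_0)$ indexed by Demazure-type roots $e$ of the weight monoid. For each such root one would attempt to lift the associated homogeneous locally nilpotent derivation $\partial_0$ of $\KK[X_0]$ to a homogeneous locally nilpotent derivation $\partial$ of $\KK[X]$ whose leading term in the degeneration filtration is $\partial_0$. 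The main obstacle sits precisely here: taking the associated graded of an LND produces an LND of the associated graded algebra, but the reverse operation is obstructed by $G$-module extension classes in $\KK[X]$, and not every $\partial_0$ need lift. One would need to show that sufficiently many roots lift to connect every pair of adjacent $G$-orbits, or to supplement the lifted subgroups with additional $\GG_a$-actions produced directly on $X$ using Knop's theory of spherical roots and the action of the little Weyl group on the valuation cone. A fallback strategy is an induction on the rank of $X$, reducing along a color-free facet of the colored cone to a lower-rank spherical subvariety where the inductive hypothesis applies, together with the Gromov--Winkelmann-type extension results of Flenner--Kaliman--Zaidenberg to glue flexibility across the boundary stratification.
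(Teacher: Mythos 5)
The statement you are addressing is not a theorem of the paper: it is an open conjecture recorded in Section~3 (attributed to \cite[Section~6]{AFKKZ-2}), and the paper explicitly states that ``in general it remains open,'' with only the horospherical case \cite{Sh,GS} and certain almost homogeneous cases known. So there is no proof in the paper to compare against, and your text is correctly read as a research programme rather than a proof. As such it cannot be accepted as a proof: the decisive step is exactly the one you flag yourself. Passing to the horospherical degeneration $X_0$ and invoking Gaifullin--Shafarevich gives flexibility of $X_0$, but lifting a homogeneous locally nilpotent derivation of the associated graded algebra $\KK[X_0]$ back to a locally nilpotent derivation of $\KK[X]$ is genuinely obstructed, and in this approach that lifting problem is essentially equivalent to the conjecture itself. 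Writing ``one would need to show that sufficiently many roots lift'' and offering a ``fallback strategy'' does not close the gap: no mechanism is exhibited that produces even one new $\GG_a$-subgroup on $X$ connecting two adjacent $G$-orbits beyond those already coming from $G$ acting on $X$.

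There is also a factual slip in your reduction. From $\KK(X)^B=\KK$ one does get $\KK[X]^G=\KK$ and hence a unique closed $G$-orbit, but that orbit need not be a fixed point: $X=\SL_2/T$ is a smooth affine spherical variety with no non-constant invertible regular functions which is its own closed orbit. So the picture of a strictly convex colored cone with ``a single fixed point at the apex'' is not available in general, and any induction over the (finitely many) $G$-orbits must start from a possibly positive-dimensional closed orbit whose points may moreover be singular on $X$. The overall architecture --- connect adjacent $G$-orbits by $\GG_a$-subgroups in $\SAut(X)$ and conclude transitivity on $X^{\reg}$ via Corollary~\ref{tr-fl}, in the spirit of Lemma~\ref{l2} and of the horospherical proofs --- is the natural one and matches how the known special cases were settled, but as written the proposal establishes nothing beyond what the cited results already give.
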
 

The above results confirm this conjecture, but in general it remains open.

\subsection{Vector bundles} Let $\pi\colon E \to X$ be a reduced irreducible linear space over a flexible variety $X$, which is a vector bundle over $X^{\reg}$. Suppose there is an action of the group $\SAut(X)$ on $E$ such that the action of each $\GG_a$-subgroup in $\SAut(X)$ is algebraic and the morphism $\pi$ is equivariant. In~\cite[Corollary~4.5]{AFKKZ-1} it is shown that the total space $E$ is a flexible variety. In particular, the total space of the tangent bundle ~$TX$ and, more generally, of any tensor bundle
$E=(TX)^{\times a}\times(T^*X)^{\otimes b}$ is flexible.

\smallskip

In~\cite{Du} one can find an example of a smooth rational affine surface $S$ with a finite automorphism group such that the automorphism group of the cylinder
$S\times\AA^2$ acts infinitely transitively on the complement in this cylinder to a closed subset of codimension $\ge 2$. This example shows that there are rigid varieties such that the cylinders over them are generically flexible.

\subsection{Affine cones over projective varieties} Let $X$ be an affine cone over a projective variety $Y$ polarized by an ample divisor $H$.
By an affine cone we mean the spectrum of the homogeneous coordinate ring 
$$
\bigoplus_{n\ge 0} H^0(Y, nH).
$$
If the divisor~$H$ is very ample and we consider the image of the variety $Y$ under embedding into a projective space by means of a complete linear system~$|H|$, the variety $X$ coincides with the affine cone over the projective subvariety $Y$.

A natural question is the characterization of flexibility of the variety $X$ in terms of the pair $(Y,H)$. The study of this question began in the work of Perepechko~\cite{Pe}. It is based on the results of a series of papers by Kishimoto-Prokhorov-Zaidenberg. We briefly recall the relevant concepts and results. A detailed explanation can be found, for example, in a recent survey~\cite{CPPZ}.

An open subset $U$ of a variety $Y$ is called a \emph{cylinder} if $U\cong Z\times\AA^1$ for some smooth affine variety $Z$. A cylinder $U$ is called \emph{$H$-polar} if $U = Y\setminus\text{Supp}\,D$ for some effective $\QQ$-divisor $D$ linearly equivalent to $H$. In the works of Kishimoto-Prokhorov-Zaidenberg it is shown that an $H$-polar cylinder $U$ on $Y$ determines a $\GG_a$-action on the affine cone $X$ over $Y$.

We call a subset $W\subseteq Y$ \emph{invariant} with respect to a cylinder $U\cong Z\times\AA^1$ if $W\cap U = \pi^{-1}(\pi(W))$, where $\pi\colon U \to Z$ is the projection along $\AA^1$. In other words, a subset $W$ is invariant if every $\AA^1$-fiber of the cylinder is either contained in $W$ or does not intersect~$W$. A variety $Y$\emph is \emph{transversally covered} by cylinders $U_i$, $i=1,\ldots,s$ if $Y = \bigcup_i U_i$ and no proper subset $W\subseteq Y$ is invariant with respect to all elements of the covering $\{U_i\}$.

Theorem~2.5 in \cite{Pe} states that if for some very ample divisor $H$ on a normal projective variety $Y$ there is a transversal cover by $H$-polar cylinders, then the corresponding affine cone $X$ over $Y$ is flexible. This result allows to prove that any affine cone over a del Pezzo surface of degree $\ge 5$ is flexible. In~\cite{Pe} flexibility is also proved for some affine cones over del Pezzo surfaces of degree $4$, including the pluri-anticanonical cone. In~\cite{PW}, the result on flexibility is obtained for all affine cones over del Pezzo surfaces of degree~$4$.

On the other hand, it is known that pluri-canonical cones over del Pezzo surfaces of degrees $1$ and $2$ are rigid, that is, they do not admit nontrivial $\GG_a$-actions, see \cite[Corollary~1.8]{CPW} and \cite[Theorem~1.1]{KPZ-2}. The question of the rigidity of a pluri-canonical cone over del Pezzo surfaces of degree~$3$ has remained open for more than 15 years. The proof of this fact is obtained in~\cite{CPW}. It is all the more surprising that any very ample divisor on a del Pezzo surface of degree~$3$ that is not proportional to the anticanonical divisor defines an affine cone $X$ on which the group $\SAut(X)$ acts with an open orbit~\cite{Pe-2}. The presence of nontrivial $\GG_a$-actions on affine cones corresponding to the anticanonical divisor on del Pezzo surfaces with singularities is investigated in~\cite{CPW-2}.

\smallskip

In~\cite{MPS}, the following result is obtained.  

\begin{proposition} [{\cite[Theorem~1.4]{MPS}}] \label{propth}
Suppose that a normal projective variety $Y$ is covered by flexible affine charts $U_i$, ${i\in I}$ and we fix a very ample divisor $H$ on~$Y$. If each subset $Y\setminus U_i$ is the support of an effective $\QQ$-divisor $D_i $ that is linearly equivalent to the divisor~$H,$ then the affine cone $\text{AffCone}_HY$ over $Y$ is a flexible variety.
\end{proposition}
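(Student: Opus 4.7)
The plan is to lift the flexibility of the charts $U_i$ to a family of homogeneous $\GG_a$-subgroups of $\SAut(X)$ and then apply Theorem~\ref{ccaa}. Let $R=\bigoplus_{n\ge 0}H^0(Y,nH)$ be the coordinate ring of $X$ and $\pi\colon X\setminus\{0\}\to Y$ the natural projection. Since $H$ is very ample and $D_i\sim_\QQ H$, for each $i$ I choose a positive integer $m_i$ and a section $s_i\in H^0(Y,m_iH)$ whose zero divisor is $m_iD_i$, giving $\mathcal{O}(U_i)\cong R[1/s_i]_0$. A locally nilpotent derivation $\partial$ of $\mathcal{O}(U_i)$ extends canonically to a degree-zero derivation $\bar\partial$ of $R[1/s_i]$ annihilating $s_i$, and for $N$ large enough the rescaling $\tilde\partial:=s_i^N\bar\partial$ restricts to a homogeneous locally nilpotent derivation of $R$, generating a $\GG_m$-equivariant $\GG_a$-subgroup of $\SAut(X)$. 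This is the standard Kishimoto--Prokhorov--Zaidenberg correspondence between $\GG_a$-actions on the principal affine chart $U_i$ and homogeneous $\GG_a$-actions on the cone. Let $\mathcal{N}$ denote the saturated family of locally nilpotent vector fields on $X$ generated by all such $\tilde\partial$, closed under replicas and under conjugation in the generated group $G:=\langle\mathcal{N}\rangle\subseteq\SAut(X)$.

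I would next verify that $G$ admits an open orbit on $X$, which by Corollary~\ref{ccdd} reduces to showing $\KK(X)^G=\KK$. Decomposing a $G$-invariant $f\in\KK(X)$ into $\GG_m$-homogeneous components $f=\sum f_n$, the identity $\tilde\partial(f)=0$ forces $\bar\partial(f_n)=0$ on $R[1/s_i]$ for every $i$, every locally nilpotent $\partial$ on $\mathcal{O}(U_i)$, and every $n$, since $s_i$ is a unit in the localization. Interpreting each $f_n$ via the trivialization of $\mathcal{O}_Y(nH)|_{U_i}$ afforded by $s_i$, we obtain a $\partial$-invariant function on $U_i$ for every $\partial$; the flexibility of $U_i$ together with $\mathrm{FML}(U_i)=\KK$ (by the proposition recalled in Section~2.1 on the Makar-Limanov field invariant) forces this function to be a constant. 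Matching the resulting constants on overlaps $U_i\cap U_j$, where the trivializations by $s_i$ and $s_j$ differ by a nontrivial transition function, forces $f_n=0$ for $n>0$ and $f_0\in\KK$.

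Finally I would show that the open orbit $O$ coincides with $X^{\reg}$, equivalently by Corollary~\ref{c121} that the tangent vectors to orbits in $\mathcal{N}$ span $T_xX$ at every smooth $x\in X^{\reg}$. Set $y=\pi(x)$ and choose $U_i\ni y$: the flexibility of $U_i$ produces derivations $\partial_1,\ldots,\partial_k\in\LND(\mathcal{O}(U_i))$ whose values at $y$ span $T_yY$, and the lifts $\tilde\partial_j(x)$ project under $d\pi$ onto a spanning set of $T_yY$, giving the $\dim Y$ horizontal tangent directions. For the one remaining vertical (Euler) direction, the irreducibility of $Y$ ensures any other chart $U_j$ meets $U_i$, and the flexibility of $U_i$ furnishes an element $g\in G$ carrying $x$ to $x':=g\cdot x$ with $\pi(x')\in U_i\cap U_j$. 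At $x'$ two lifts of a common tangent vector $v\in T_{\pi(x')}Y$ arising from the charts $U_i$ and $U_j$ differ by a nonzero multiple of the Euler vector field along the fiber $\pi^{-1}(\pi(x'))$, yielding a vertical element of the tangent span at $x'$. The conjugation closure of $\mathcal{N}$ transports this back to a vertical element at $x$, using that $G$ consists of $\GG_m$-equivariant automorphisms which preserve vertical directions. Combined with the horizontal spanning set this gives a full tangent span at every smooth point, so $O=X^{\reg}$; Theorem~\ref{ccaa} now yields infinite transitivity of $G$ on $X^{\reg}$, and Corollary~\ref{tr-fl} delivers the flexibility of $X$. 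The principal obstacle is this last step: producing the vertical tangent direction at an arbitrary smooth point requires moving $x$ by $G$ into a chart overlap and then transporting the vertical direction back via conjugation, crucially invoking both the flexibility of each $U_i$ and the covering hypothesis on $\{U_i\}$.
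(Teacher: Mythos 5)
The survey itself gives no proof of this statement --- it is quoted from \cite[Theorem~1.4]{MPS} --- so the comparison must be with the argument of Micha\l ek--Perepechko--S\"uss. Your strategy coincides with theirs: lift locally nilpotent derivations from each chart $U_i$ to homogeneous locally nilpotent derivations on the cone, note that the lifts coming from a single chart span only the hyperplane $\ker(ds_i)_x$ tangent to the level sets of $s_i$, and recover the missing Euler direction by moving the point into an overlap $U_i\cap U_j$ and comparing the two hyperplanes; conjugation-invariance of the generating set then spreads the spanning property over the whole orbit. Your first and third paragraphs are the correct proof in outline. The genuine gap is in the lifting lemma, which is the technical heart of \cite{MPS} and which you dispose of in one sentence. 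When $m_i>1$, i.e.\ when $D_i$ is a genuinely fractional $\QQ$-divisor, the restriction $\mathcal{O}_Y(H)|_{U_i}$ is only a torsion element of $\Pic(U_i)$ and the $\GG_m$-torsor over $U_i$ need not be trivial; a degree-zero derivation of $R[1/s_i]$ is then not determined by its degree-zero part together with $\bar\partial(s_i)=0$ in any formal way. One must check that the unique such extension of $\partial$ to the fraction field preserves the graded pieces $R[1/s_i]_n=H^0(U_i,\mathcal{O}_Y(nH)|_{U_i})$ for $m_i\nmid n$, and --- more seriously --- that $s_i^N\bar\partial$ is locally nilpotent on them: on such a piece $\bar\partial$ acts locally as $\partial+\psi$ for a nontrivial multiplication term $\psi$ built from $\partial\log$ of the transition data, and operators of that shape are not locally nilpotent in general. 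This verification is exactly the content of the lifting result in \cite[Section~2]{MPS}; the Kishimoto--Prokhorov--Zaidenberg correspondence you invoke was established only for the cylinder derivation $\partial/\partial t$, not for arbitrary elements of $\LND(\mathcal{O}(U_i))$.

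Your second paragraph is both unnecessary and flawed: a rational function on $X$ does not decompose into finitely many $\GG_m$-homogeneous components, so the computation of $\KK(X)^G$ as written does not parse. Fortunately you never need it, since the third paragraph establishes $G$-flexibility of every smooth point directly, and that already yields the open orbit, flexibility of $X$, and (via Theorem~\ref{ccaa}) infinite transitivity. Two smaller repairs in the third paragraph: the difference of the two lifts of $v$ is a \emph{nonzero} multiple of the Euler field only when $\pi(x')$ is not a critical point of the transition function $s_i^{m_j}/s_j^{m_i}$, so you must choose $x'$ generically in the overlap --- possible because that function is nonconstant for $U_i\neq U_j$ and the lifted group moves $x$ over a dense subset of $U_i$; and the elements of $G$ are normalized by $\GG_m$ rather than $\GG_m$-equivariant (the lifted fields have positive degree), though for transporting the spanning property back to $x$ the conjugation-invariance of $\mathcal{N}$ is all you actually need.
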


By the \emph{Segre-Veronese variety} $v_{s_1}(\PP^{d_1})\times\ldots\times v_{s_k}(\PP^{d_k})$ we mean the image of embedding of the direct product $\PP^{d_1}\times\ldots\times\PP^{d_k}$ of projective spaces, where each of the factors is embedded into a larger projective space by the Veronese map with the parameters $s_1,\ldots,s_k$, respectively, and the product is embedded by the Segre map. Further, for each projective subvariety $X\subseteq\PP^n$ we call the \emph{variety of secants}
(respectively, the \emph{variety of tangents}) of $X$ the Zariski closure of the union of secant (respectively, tangent) lines of the subvariety $X$ in~$\PP^n$.
In \cite[Theorem~2.20]{MPS} it is shown that the affine cone over the variety of secants to a Segre-Veronese variety is flexible. Moreover, if $s_1=\ldots=s_k=1$, then the affine cone over the variety of tangents of a Segre-Veronese variety turns out to be flexible.

Theorem~4.5 in~\cite{MPS} guarantees the flexibility of affine cones over some three-dimensional Fano varieties. In~\cite{PZ-2}, the flexibility of affine cones over four-dimensional Fano-Mukai varieties of genus~$10$ is proved.

\subsection{Universal torsors} The concept of a universal torsor originated in the works of Colliot-Th\'el\`ene and Sansuc \cite{CTS-1,CTS-2} on arithmetic algebraic geometry and was used to study rational points on algebraic varieties. In particular, universal torsors demonstrated their effectiveness for obtaining positive results on the Manin Conjecture that provides an asymptotic formula for the number of rational points of bounded height.

Let us recall briefly the definition. Let $X$ be a normal algebraic variety with a free finitely generated divisor class group $\Cl(X)$. Denote by $\WDiv(X)$ the group of Weil divisors on $X$ and fix a subgroup $K\subseteq\Div(X)$ that projects onto $\Cl(X)$ isomorphically. Consider the Cox sheaf $\RRR$ of the variety $X$ associated with the subgroup $K$: to each open subset  $U\subseteq X$ this sheaf put in correspondence the algebra of sections
$$
\bigoplus_{D\in K} H^0(U,D).
$$ 
The algebra of global sections of this sheaf is called the \emph{Cox ring} $R(X)$ of the variety $X$. It is easy to see that the objects defined here up to isomorphism do not depend on the choice of the subgroup~$K$.

The relative spectrum of the sheaf $\RRR$ determines the morphism
$$
q\colon \widehat{X}:=\Spec_X \RRR\to X.
$$
In the case when the variety $X$ is smooth, the map $q$ is a principal locally trivial bundle whose fiber is a torus~$H$ with the character group $\Cl(X)$. This mapping has certain universal properties and is called the \emph{universal torsor} over the variety~$X$. It is known that $\widehat{X}$ is a smooth quasi-affine variety.

If the Cox ring $R(X)$ is finitely generated, we can consider its spectrum ${\overline{X}:=\Spec R(X)}$. It is called the \emph{total coordinate space} of the variety $X$. This is an affine factorial variety, and the variety $\widehat{X}$ is embedded into $\overline{X}$ as an open subset whose complement does not contain divisors. For more information on this construction and its generalizations, see~\cite{Co} and~\cite[Chapter~I]{ADHL}.

\smallskip

Following the work~\cite{APS}, we say that a variety $X$ is \emph{$A$-covered} if there is a cover of $X$ by open subsets $U_i$ isomorphic to the affine space $\AA^n$. It is clear that an $A$-covered variety is a smooth rational variety with a free finitely generated divisor class group. In~\cite{APS} examples of wide classes of $A$-covered varieties are given.
In particular, these are all smooth complete toric varieties, all flag varieties, and, more generally, all smooth complete spherical varieties. Using the fact that the property to be $A$-covered is preserved under blowing up a point, it can be shown that all smooth complete rational surfaces are $A$-covered. In~\cite[Appendix]{APS} it is proved that all smooth complete rational varieties with an action of a torus of complexity one are $A$-covered. 

\begin{theorem} [{\cite[Theorem~3]{APS}}] \label{tut}
Let $X$ be an $A$-covered variety of dimension $\ge 2$ and $q\colon\widehat{X}\to X$ be a universal torsor over $X$. Then the group $\SAut(\widehat{X})$ acts on the variety $\widehat{X}$ infinitely transitively. 
\end{theorem}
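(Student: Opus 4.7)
The plan is to reduce the statement to flexibility of $\widehat{X}$ and then invoke the quasi-affine generalization of Theorem~\ref{tmain} (\cite[Theorem~2]{APS}, \cite[Theorem~1.11]{FKZ}). Since $\widehat{X}$ is smooth and quasi-affine of dimension $\dim X + \rk\Cl(X) \ge 2$, flexibility of $\widehat{X}$ gives, via Corollary~\ref{tr-fl}, transitivity of $\SAut(\widehat{X})$ on $\widehat{X}$, and the quasi-affine form of Theorem~\ref{tmain} upgrades this transitivity to infinite transitivity.

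To establish flexibility I would work locally in the $A$-cover $\{U_i\}$ with $U_i \cong \AA^n$. Since $\Cl(U_i) = 0$, the torsor trivialises over each chart and $q^{-1}(U_i) \cong \AA^n \times H$, where $H = \Spec\KK[\Cl(X)]$ is the characteristic torus of rank $r = \rk\Cl(X)$. Fix a homogeneous section $f_{i,0}\in R(X)$ whose non-vanishing locus is $q^{-1}(U_i)$, and choose further homogeneous elements $f_{i,1},\ldots,f_{i,n}$ of the same $\Cl(X)$-degree so that the rational functions $t_{i,j} = f_{i,j}/f_{i,0}$ form coordinates on $U_i$. For each $j$ I would construct a $\Cl(X)$-homogeneous locally nilpotent derivation $\partial_{i,j}$ of $R(X)$ of degree zero whose exponential sends $f_{i,j}\mapsto f_{i,j}+s\,f_{i,0}$ and fixes a chosen complementary set of generators; on $q^{-1}(U_i)$ this restricts to the parallel translation $t_{i,j}\mapsto t_{i,j}+s$ of the $\AA^n$-factor. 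Taking replicas $g\cdot\partial_{i,j}$ by $H$-eigenfunctions $g\in\ker\partial_{i,j}$ of nonzero $\Cl(X)$-degree then produces a saturated family: the degree-zero shifts give the $n$ horizontal tangent directions above each $U_i$, while replicas of nonzero degree no longer commute with $H$ and supply the remaining $r$ tangent directions along the $H$-fibre. Hence the constructed $\GG_a$-subgroups span the tangent space at every point of $\widehat{X}$, so $\widehat{X}$ is flexible.

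The main obstacle is constructing the homogeneous derivations $\partial_{i,j}$ on $R(X)$: unlike the toric case, for a general $A$-covered $X$ the Cox ring need not be polynomial, so $\partial_{i,j}$ cannot be defined simply by declaring its values on free generators, and one must verify compatibility with the defining relations of $R(X)$. This is where the affine-space structure of each $U_i$ enters decisively --- it makes the degree-zero part of the localisation $R(X)[f_{i,0}^{-1}]$ the polynomial ring $\KK[t_{i,1},\ldots,t_{i,n}]$ --- together with the freeness of $\Cl(X)$, which lets the grading split cleanly and the formally defined derivation descend to $R(X)$ itself. A secondary point is ensuring that the $\GG_a$-action on $q^{-1}(U_i)$ extends to all of $\widehat{X}$, which is automatic because $\widehat{X}$ sits inside the factorial affine variety $\overline{X} = \Spec R(X)$ as the complement of a subset of codimension $\ge 2$, so any locally nilpotent derivation of $R(X)$ integrates to a $\GG_a$-action on $\widehat{X}$. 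Once these ingredients are in place, the reduction of the first paragraph completes the proof.
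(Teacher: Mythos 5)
Your first paragraph and your chart-by-chart lifting of coordinate translations to homogeneous locally nilpotent derivations of the Cox ring are in line with the actual argument. The genuine gap is at the decisive step where you produce the $r=\rk\Cl(X)$ tangent directions along the fibres of $q$: you claim these come from replicas $g\cdot\partial_{i,j}$ with $g\in\ker\partial_{i,j}$ homogeneous of nonzero degree. A replica never supplies a new tangent direction. Since $g$ is $\partial_{i,j}$-invariant it is constant along the orbits of $\exp(\KK\partial_{i,j})$, so $\exp(tg\partial_{i,j})(x)=\exp\bigl(tg(x)\partial_{i,j}\bigr)(x)$; the replica orbit through $x$ is either $\{x\}$ (if $g(x)=0$) or exactly the original orbit, and its tangent vector at $x$ is $g(x)\partial_{i,j}(x)$, proportional to $\partial_{i,j}(x)$. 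This is precisely the remark in Section~\ref{1.3} that a replica acts ``in the same direction, but with a different velocity''; the fact that $g\partial_{i,j}$ has nonzero degree and no longer commutes with the torus $H$ is irrelevant to the span of orbit tangent vectors at a fixed point. Hence your generating family spans, over $q^{-1}(U_i)\cong\AA^n\times H$, only the $n$ horizontal directions of that trivialisation, and flexibility of $\widehat{X}$ does not follow from what you have written.

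The vertical directions must instead come from the interplay of derivations attached to \emph{different} charts: over $q^{-1}(U_i\cap U_k)$ the two trivialisations differ by a nonconstant $H$-valued cocycle, so a field that is horizontal for the chart $U_k$ acquires a nonzero component along the $H$-fibre when read in the trivialisation over $U_i$. Already for $X=\PP^n$, $\widehat{X}=\AA^{n+1}\setminus\{0\}$, the lifted fields are the $x_i\,\partial/\partial x_j$ with $i\ne j$, and the radial (fibre) direction at a point is only obtained by combining fields coming from at least two charts --- and at special points not even then, which is why one still needs conjugates/saturation and the transversality of the covering to get flexibility at \emph{every} point. Replicas are needed only to make the family saturated so that Theorem~\ref{ccaa} applies. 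A secondary problem: your extension of the $\GG_a$-actions to all of $\widehat{X}$ via the affine variety $\overline{X}=\Spec R(X)$ presupposes that $R(X)$ is finitely generated, which can fail for an $A$-covered $X$ (the paper notes that this very construction yields flexible quasi-affine varieties with non--finitely generated algebras of regular functions), so the actions have to be integrated on the quasi-affine variety $\widehat{X}$ directly.
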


The proof of this result is based on the fact that every open chart on $X$ isomorphic to~$\AA^n$ carries $n$ transversal structures of cylinders, each of which defines a $\GG_a$-action on the universal torsor.

\smallskip

Theorem~\ref{tut} allows to obtain many interesting examples of smooth flexible quasi-affine varieties. For instance, in this way one can construct a flexible quasi-affine variety with a non finitely generated algebra of regular functions~\cite[Section~5]{APS}.

\smallskip

Note that Theorem~\ref{tut} implies only that the group $\SAut(\overline{X})$ has a big open orbit on the total coordinate space $\overline{X}$~\cite[Theorem~4]{APS}. The question of the flexibility of the variety $\overline{X}$ remains open. Partial results are known in this direction: in~\cite[Theorem~5.4]{MPS} it is proved that the total coordinate space over a smooth del Pezzo surface is flexible, and \cite[Theorem~5.9]{MPS} guarantees the flexibility property for total coordinate spaces over complete smooth rational $T$-varieties of complexity one.

\smallskip

\subsection{Gizatullin surfaces} This is what normal affine surfaces are called, which admit a completion by a chain of smooth rational curves. Gizatullin's Theorem~\cite[Theorems~2 and 3]{Gi} (see also~\cite{Du-1}) states that a normal affine surface $X$ other than
$$
\AA^1\times \big(\AA^1\setminus\{0\}\big)
$$
admits such a completion if and only if the group $\SAut(X)$ acts on $X$ with an open orbit; in this case the complement to the open orbit is finite. 

Gizatullin~\cite{Gi} conjectured that if the ground field $\KK$ has characteristic zero then the open orbit coincides with the set of smooth points, that is, $X$ is flexible. At that time, it was known that this is not the case in positive characteristic, since here the full automorphism group of a surface $X$ can have smooth fixed points~\cite{GD}. As we have seen above,  Gizatullin's conjecture is true for Gizatullin surfaces defined in $\AA^3$ by equations of the form $xy - f(z) = 0$; in this case we obtain a suspension over the line $\AA^1$. A wide class of surfaces for which Gizatullin's conjecture holds form the so-called Danilov-Gizatullin surfaces~\cite{Giz}.

However, in~\cite{Ko} Kovalenko constructed a counterexample to Gizatullin's conjecture over the field of complex numbers: in~\cite[Theorem~3.11]{Ko} classes of smooth Gizatullin surfaces $X$ are described for which the action of the group $\Aut(X)$ on $X$ is not transitive.

\subsection{Calogero-Moser spaces} Recall that the \emph{Calogero-Moser space} is the space of the categorical quotient
$$
C_n=\big\{(X,Y)\in\Mat_n(\CC)\times\Mat_n(\CC) \, :\, \rk([X,Y]+\Id)=1\big\} /\!/\GL_n(\CC),
$$
where the group $\GL_n(\CC)$ acts on pairs of matrices by conjugation:
$$
g(X,Y)=(gXg^{-1},gYg^{-1}).
$$
Such spaces play an important role in representation theory and other branches of mathematics. It is known that $C_n$ is a smooth irreducible affine variety of dimension $2n$. It carries a hyperk\"ahler structure and is an example of a Nakajima quiver variety. This variety arises as a partial completion of the integrable Calogero-Moser system. These and other facts with links to the original works can be found in~\cite[Section~1]{BEE}.

Let us consider on the space of pairs of matrices the mappings of the form
$$
(X,Y)\mapsto (X+p(Y),Y) \quad \text{and} \quad (X,Y)\mapsto (X,Y+q(X)),
$$
where $p(x)$ and $q(x)$ are arbitrary polynomials in one variable. It is easy to see that these mappings induce automorphisms of the space $C_n$. Let us denote by $G$ the group of automorphisms of the space $C_n$ generated by these automorphisms. In~\cite[Theorem~1]{BEE} it is proved that the action of~$G$ on~$C_n$ is $2$-transitive. In the same paper, the authors consider the diagonal action of the group $G$ on the direct product $C_{n_1}\times\ldots\times C_{n_k}$ for any pairwise distinct positive integers $n_1,\ldots,n_k$. We call such an action \emph{collectively infinitely transitive} if for any positive integers $m_1,\ldots,m_k$ and any sets of pairwise distinct $m_1$ points on $C_{n_1}$, pairwise distinct $m_2$ points on $C_{n_2}$ and so on, there is an element of the group $G$ that translates these sets to any other predefined sets of pairwise distinct points of the same cardinalities.

In~\cite{BEE}, the authors conjectured that the action of the group $G$ on the space $C_n$ is infinitely transitive, and the diagonal action described above has the property of collective infinite transitivity. These conjectures are proved in~\cite[Theorem~3]{Ku}. Independently, in~\cite{An-1} the author checks the flexibility of the variety $C_n$, which implies infinite transitivity of the group $\SAut(C_n)$ containing $G$ as a proper subgroup.

\section{Infinite transitivity and finite generation}
\label{sec-4}

In this section we discuss a recently discovered nontrivial effect. It would seem that the infinite transitivity of the action of the group of special automorphisms on the set of smooth points of a flexible affine variety is caused by the fact that we include saturated sets of $\GG_a$-subgroups, in particular, all replicas of $\GG_a$-subgroups, in the acting group. In this context, the following conjecture, formulated in ~\cite{AKZ-2}, may look unexpected.

\begin{conjecture} \label{concon} 
Let $X$ be an affine variety of dimension $\ge 2,$ where the set of flexible points forms a nonempty open subset~$O$. Then there is such a finite collection $\{H_1,\ldots,H_k\}$ of $\GG_a$-subgroups in the automorphism group $\Aut(X)$ that the subgroup $G=\langle H_1,\ldots,H_k\rangle$ generated by these subgroups as an abstract group acts on $O$ infinitely transitively.
\end{conjecture}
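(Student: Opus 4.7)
The plan is to combine the infinitesimal criterion behind Theorem~\ref{tmain} with the ind-group structure on $\Aut(X)$ and the transversality theorem (Theorem~\ref{ccgg}), reducing the conjecture to two successive tasks: (i)\! achieve infinite transitivity in the ind-topological closure $\overline{G}$ of the subgroup generated by a carefully chosen finite set of $\GG_a$-subgroups, and then (ii)\! descend this transitivity from $\overline{G}$ back to the abstract group $G$ itself.

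For task (i), I would fix a point $x_0\in O$ and, using Corollary~\ref{c121}, pick locally nilpotent vector fields $\partial_1,\ldots,\partial_n\in\LND(\SAut(X))$ with $n=\dim X$ whose values at $x_0$ form a basis of~$T_{x_0}X$. The subgroups $H_i:=\exp(\KK\partial_i)$ already satisfy that $H_1.(H_2.\cdots(H_n.x_0))$ is open in $X$, by the characterization of $G$-flexible points. I would then enlarge the list in two stages. First, for each $i$, adjoin subgroups $\exp(\KK f_{ij}\partial_i)$ for a finite system of invariants $f_{ij}\in\ker\partial_i$; the interplay of these with the other $H_k$ through iterated conjugation and commutator identities in the ind-group should produce in $\overline{G}$ all replicas of each $\partial_i$. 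Second, using Proposition~\ref{ccff}(b), adjoin finitely many $\alpha H_i\alpha^{-1}$ with $\alpha$ running through a fixed finite list of short words, so that the full $G$-conjugation orbit of $\{H_1,\ldots,H_k\}$ already lies in $\overline{G}$. The resulting $\overline{G}$ should then be generated by a saturated collection of LNDs, whence Theorem~\ref{ccaa} yields infinite transitivity of $\overline{G}$ on~$O$.

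For task (ii), given $m$-tuples $p,q\in O^m$ of pairwise distinct points, there exists $\varphi\in\overline{G}$ with $\varphi.p=q$. Choosing indices $(i_1,\ldots,i_s)$ as in Proposition~\ref{ccff}(b) applied to $G$ acting diagonally on $X^m$, the morphism
\[
\Phi_p\colon H_{i_1}\times\cdots\times H_{i_s}\to O^m,\qquad (h_1,\ldots,h_s)\mapsto (h_1\cdots h_s).p,
\]
has image equal to the $G$-orbit $G.p\subseteq O^m$. The aim becomes to show $G.p=\overline{G}.p$, and here I would invoke Theorem~\ref{ccgg} applied to the pair $(\{q\},\mathrm{im}\,\Phi_p)$ to guarantee that a generic translate of $\mathrm{im}\,\Phi_p$ meets $q$, then combine this with an ind-density argument to promote the approximate realization of $\varphi$ to an honest element of $G$ sending $p$ to~$q$.

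The principal obstacle is precisely this final descent. Bounded-length words in $H_1,\ldots,H_k$ form only a constructible subset of $\overline{G}$, so the existence of $\varphi\in\overline{G}$ with $\varphi.p=q$ does not automatically yield an element of $G$ with the same effect on $p$. In the toric case~\cite{AKZ-2}, Demazure root combinatorics provides the missing control, and for an affine space, the Derksen and Bodnarchuk theorems give explicit short expressions for elementary automorphisms. For a general flexible variety no such structure is available a priori, and the most plausible route forward appears to be an induction on $\dim X$ via the suspension construction (Theorem~\ref{suspakz}) together with an effective version of Theorem~\ref{ccbb} providing a bound on $s$ uniform in~$m$. Producing that uniform bound seems to be the genuine mathematical content of the conjecture.
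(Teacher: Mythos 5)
The statement you are trying to prove is stated in the paper as an open conjecture (Conjecture~\ref{concon}); the survey proves it only in special cases, most generally for non-degenerate affine toric varieties that are smooth in codimension two (Theorem~\ref{tcod2}) and for affine spaces, so there is no full proof in the paper to compare against. Your overall two-step strategy --- first arrange infinite transitivity for the ind-closure $\overline{G}$, then descend to $G$ --- is exactly the strategy of the known partial results. But you have the difficulty located in the wrong place. The descent step (your task (ii)) is not the obstacle: since $G=\langle H_1,\ldots,H_k\rangle$ is algebraically generated, the cited Proposition~3.4 of~\cite{AKZ-2} (reproduced in Section~4 of the paper) already gives that the orbits of $G$ and $\overline{G}$ coincide and that $m$-transitivity and infinite transitivity of $\overline{G}$ on its open orbit pass down to $G$. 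Your transversality and ``ind-density'' machinery for realizing $\varphi\in\overline{G}$ by an honest word in the $H_i$ is therefore unnecessary; no uniform bound on word length is required.

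The genuine gap is in your task (i), at the sentence claiming that ``the interplay of these with the other $H_k$ through iterated conjugation and commutator identities in the ind-group should produce in $\overline{G}$ all replicas of each $\partial_i$.'' This is an assertion, not an argument, and it is precisely where the conjecture is open. Saturation requires all replicas $f\partial_i$ with $f$ ranging over the infinite-dimensional space $\ker\partial_i$, and there is no known mechanism on a general flexible variety for manufacturing this infinite family inside the closure of a group generated by finitely many $\GG_a$-subgroups; the question of which $\GG_a$-subgroups lie in $\overline{G}$ is itself the open Conjecture~\textup{\cite[Conjecture~5.23]{AKZ-2}} recorded in Section~4. In the one case where the program has been carried out, the toric case, the production of extra subgroups in $\overline{G}$ rests on Demazure-root combinatorics and a Newton-polyhedron degeneration (Lemma~\ref{techkey}), and even there an additional smoothness-in-codimension-two hypothesis is needed to assemble the initial finite collection. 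Your closing paragraph concedes that the missing control is ``the genuine mathematical content of the conjecture,'' which is accurate, but it means the proposal is a plausible reduction scheme rather than a proof; and the reduction it targets (a uniform bound on $s$ in Theorem~\ref{ccbb}) is aimed at the step that is already solved rather than the one that is not.
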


\subsection{The case of an affine toric variety} In~\cite{AKZ-2} and subsequent publications, a number of confirmations of Conjecture~\ref{concon} were obtained. The most general result in this direction is the following theorem.

\begin{theorem}[{\cite[Theorem~5.20]{AKZ-2}}] \label{tcod2}
Let $X$ be a non-degenerate affine toric variety of dimension $\ge 2$. Assume that $X$ is smooth in codimension two. Then there is a finite collection $H_1,\ldots,H_k$ of root subgroups on $X$ such that the group $G=\langle H_1,\ldots,H_k\rangle$ acts on the smooth locus $X^{\reg}$ infinitely transitively.
\end{theorem}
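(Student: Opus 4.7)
The plan is to combine the orbit-ascent machinery from Section~\ref{3.2} with an ind-topological closure argument in the spirit of the Derksen--Bodnarchuk approach to $\Aut(\AA^n)$ alluded to at the start of Section~\ref{sec-4}. The smooth-in-codimension-two hypothesis is what makes the orbit combinatorics finite enough to handle with finitely many root subgroups.

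First I would enumerate the $T$-orbits meeting $X^{\reg}$. Under the smoothness assumption, $X^{\reg}$ is a union of the finitely many orbits $\Of_\tau$ corresponding to faces $\tau \subseteq \sigma^\vee$ of codimension at most two, or equivalently to faces $\delta = \tau^\bot$ of $\sigma$ of dimension at most two. Using the $H_e$-connectedness criterion (Lemma~\ref{l1}) and the ascent lemma (Lemma~\ref{l2}), I would choose a finite collection of Demazure roots $e_1,\ldots,e_s \in \mathcal{R}(\sigma)$ so that every orbit $\Of_\tau \subseteq X^{\reg}$ is $H_{e_i}$-connected to a strictly larger orbit for some~$i$, and iteration eventually lands in the open $T$-orbit $\Of_{\sigma^\vee}$.

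Second, I would enlarge this family so that the generated group is already transitive on $X^{\reg}$. The orbit $\Of_{\sigma^\vee}$ is isomorphic to $T$, and while $T$ itself is not in $\SAut(X)$, products of the form $h_1\cdot h_2 \cdot h_1^{-1}\cdot h_2^{-1}$ and $\exp(t_1\partial_{e_i})\exp(t_2\partial_{e_j})\exp(-t_1\partial_{e_i})\exp(-t_2\partial_{e_j})$ for well-chosen pairs $(e_i,e_j)$ produce automorphisms whose action on $\Of_{\sigma^\vee}$ realizes translations along a subtorus $R_{p} \subseteq T$. Since $X$ is non-degenerate, the primitive ray generators $p_1,\ldots,p_r$ span $N_\QQ$, so by taking a finite set of such commutator words I obtain translations along enough one-parameter subgroups to act transitively on~$\Of_{\sigma^\vee}$. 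Combined with the ascent to $\Of_{\sigma^\vee}$ from Step~1, this gives a finite collection $\{H_{e_1},\ldots,H_{e_k}\}$ whose generated group $G_0$ acts transitively on~$X^{\reg}$.

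Third, and this is where the real work lies, I would pass from transitivity to infinite transitivity. Theorem~\ref{ccaa} produces infinite transitivity only from a \emph{saturated} family of locally nilpotent derivations, and $\{\partial_{e_1},\ldots,\partial_{e_k}\}$ is almost never saturated. I would take the closure $\overline{G_0}$ of $G_0$ in the ind-topology on $\Aut(X)$, and argue that conjugation by elements of $G_0$ followed by ind-closure captures all replicas $H_{f e_i}$ for $f \in \ker \partial_{e_i}$: the characters $\chi^m$ with $m \in p_{e_i}^\bot \cap M$ are recovered as limits of products of the form $t \exp(\partial_{e_j}) t^{-1}$ where $t$ runs through the effective torus already present in $G_0$ via Step~2. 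The family obtained this way is saturated, so Theorem~\ref{ccaa} applies to $\overline{G_0}$, yielding infinite transitivity on~$X^{\reg}$. Finally, a generic-position argument based on the transversality Theorem~\ref{ccgg} --- together with the orbit-separation mechanism underlying Theorem~\ref{ccbb} --- transports any finite tuple-to-tuple prescription from $\overline{G_0}$ to~$G_0$.

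The hardest step is the third one. Showing that the ind-closure of a finitely generated subgroup of $\Aut(X)$ contains enough replicas is delicate, since a priori $\overline{G_0}$ could either be much larger than needed (making the descent to $G_0$ fail) or too small to contain a saturated family. The codimension-two smoothness assumption is used crucially here to guarantee that the orbit-combinatorial data one needs to separate (in the sense of \cite[Definition~2.6]{AFKKZ-1}) sits in the finitely many orbits of codimension at most two, keeping the required word lengths in $H_{e_1},\ldots,H_{e_k}$ uniformly bounded. This uniform bound is what ultimately allows infinite transitivity of $\overline{G_0}$ to descend to~$G_0$ itself.
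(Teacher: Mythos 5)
There is a genuine gap, and it sits exactly where you locate the ``real work'': your mechanism for enlarging $\overline{G_0}$ does not produce what you need. You propose to capture the replicas $H_{f e_i}$, $f\in\ker\partial_{e_i}$, as limits of conjugates $t\exp(\partial_{e_j})t^{-1}$ with $t$ in a torus built from commutators. But conjugating a root subgroup by an element $t$ of the torus only rescales the parameter: $t\exp(s\,\partial_e)t^{-1}=\exp(\chi^e(t)s\,\partial_e)$, so you recover the \emph{same} root subgroup $H_e$, never a replica $\exp(\KK\chi^m\partial_e)$ (which is homogeneous of the different degree $e+m$). The actual engine of the proof is Lemma~\ref{techkey}: conjugating one root subgroup by a \emph{non-commuting root subgroup} yields, via the Baker--Campbell--Hausdorff formula, a locally nilpotent derivation whose Newton polyhedron is a segment, and degenerating to the endpoints of that segment shows that the new root subgroup $H_{e_1+de_2}$ lies in $\overline{\langle H_{e_1},H_{e_2}\rangle}$. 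Iterating this produces a \emph{countable} family of root subgroups in $\overline{G}$ satisfying the hypotheses of a criterion specific to such families (\cite[Theorem~2.2, Corollary~2.8]{AKZ-2}) --- not a \emph{saturated} family in the sense of Theorem~\ref{ccaa}, which would require closure under multiplication by all of $\ker\partial_{e_i}$ and is not what the argument establishes.

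Two smaller points. First, the descent from $\overline{G}$ to $G$ does not need transversality, orbit separation, or uniformly bounded word lengths: for an algebraically generated subgroup, $m$-transitivity of $\overline{G}$ on the open orbit implies $m$-transitivity of $G$ for each $m$ separately (\cite[Proposition~3.4]{AKZ-2}), and the codimension-two smoothness hypothesis is used only in the combinatorial construction of the finite collection of roots at the last stage, not to control word lengths. Second, your Step~2 claim that commutators of root subgroups realize translations along subtori $R_p\subseteq T$ holds only for special pairs of roots (essentially $e_1+e_2=0$, giving an $\SL_2$-triple whose diagonal lies in $T$); as stated for ``well-chosen pairs'' it is unsubstantiated, and in the paper's argument transitivity on the open $T$-orbit is obtained from the generated group directly rather than by manufacturing torus elements.
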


Below we describe a scheme of the proof of this and similar results. The proof is divided into four stages. At the first stage, a description of certain countable sets of $\GG_a$-subgroups of the group $\Aut(X)$ is obtained for which the group $G$ generated by them acts on $X$ with an open orbit and the action of $G$ on the open orbit is infinitely transitive, see ~\cite[Theorem~2.2, Corollary~2.8]{AKZ-2}.

At the second stage we take the closure $\overline{G}$ of the subgroup $G$ in the automorphism group $\Aut(X)$ considered as an ind-group. Let us discuss this stage in more detail. Following~\cite[Proposition~2.1]{KPZ}, we fix an embedding $X\hookrightarrow\AA^n$ and consider the degree function on the algebra $\KK[X]$ induced by the embedding. For each automorphism $\alpha\in\Aut(X)$, we define $\deg(\alpha)$ as a maximum of degrees of homogeneous components of the images of elements of a fixed generating system under the action of the automorphism $\alpha$.

Then we have $\Aut(X)=\varinjlim\Sigma_s$, where 
\begin{itemize}
\item for $s\ge 1$ the subset $\Sigma_s:=\{\alpha\in\Aut(X)\,|\,\deg(\alpha), \deg(\alpha^{-1})\le s\}$ is a closed subvariety of the variety $\Sigma_{s+1}$;
\item for $r,s\ge 1$ the composition defines a morphism $\Sigma_r\times\Sigma_s\to\Sigma_{rs}$;
\item the passage to the inverse element defines an automorphism of $\Sigma_s$.
\end{itemize}

The Zariski closure of a subset $F\subseteq\Aut(X)$ is defined as
$$
\overline{F}= \varinjlim\overline{(F\cap\Sigma_s)},
$$ 
where the dash means the Zariski closure in the variety $\Sigma_s$. The Zariski closure of a subset $F$ is a closed ind-subvariety in the ind-variety $\Aut(X)$. In these terms, an algebraic subgroup in $\Aut(X)$ is a subgroup that is a closed subvariety in some~$\Sigma_s$.

It is not difficult to verify that the closure $\overline{G}$ of a subgroup $G\subseteq\Aut(X)$ is a closed ind-subgroup in $\Aut(X)$. Further, if $\rho\colon\mathbb{A}^1\to\Aut(X)$ is such a morphism that $\rho(t)\in G$ for $t\neq 0$, then we have $\rho(0)\in\overline{G}$.

\begin{lemma}[{\cite[Lemma~3.2]{AKZ-2}}] 
Any $G$-invariant closed subset $Y\subseteq X$ is $\overline{G}$-invariant. If the group $G$ acts on $X$ with an open orbit $O_G,$ then $O_G$ coincides with the open orbit $O_{\overline{G}}$ of the group $\overline{G}$.
\end{lemma}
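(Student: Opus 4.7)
The plan is to use that the stabilizer of any closed subset is a closed ind-subgroup of $\Aut(X)$, and then invoke the definition of the Zariski closure in the ind-topology spelled out just above the lemma.

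For the first statement, I would fix a closed $G$-invariant subset $Y\subseteq X$. For every point $y\in Y$ the evaluation map $\mathrm{ev}_y\colon\Aut(X)\to X$, $\alpha\mapsto\alpha(y)$, restricts to a morphism $\Sigma_s\to X$ on each filtration piece, since $\Aut(X)$ acts on $X$ as an ind-group. Consequently $\mathrm{ev}_y^{-1}(Y)\cap\Sigma_s$ is Zariski closed in $\Sigma_s$. As $\Sigma_s$ is a Noetherian topological space, the arbitrary intersection
$$
\bigl\{\alpha\in\Sigma_s\,:\,\alpha(Y)\subseteq Y\bigr\}\;=\;\bigcap_{y\in Y}\mathrm{ev}_y^{-1}(Y)\cap\Sigma_s
$$
is closed in $\Sigma_s$. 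Hence $\Phi:=\{\alpha\in\Aut(X):\alpha(Y)\subseteq Y\}$ is closed in the ind-topology. Using that inversion restricts to an automorphism of each $\Sigma_s$, the analogous set $\Phi^{-1}=\{\alpha:\alpha^{-1}(Y)\subseteq Y\}$ is also closed, and so the stabilizer $\mathrm{Stab}(Y)=\Phi\cap\Phi^{-1}$ is a closed ind-subgroup that contains $G$. By the very definition $\overline{G}=\varinjlim\overline{(G\cap\Sigma_s)}$, we conclude $\overline{G}\subseteq\mathrm{Stab}(Y)$, that is, $Y$ is $\overline{G}$-invariant.

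The second assertion is then immediate. Suppose $O_G\subseteq X$ is the open $G$-orbit. The complement $X\setminus O_G$ is closed and $G$-invariant, hence $\overline{G}$-invariant by the first part; thus $O_G$ is $\overline{G}$-stable. Since $G\subseteq\overline{G}$ already acts transitively on $O_G$, the larger group $\overline{G}$ acts transitively on $O_G$ as well, so $O_G$ is a single $\overline{G}$-orbit. Being open in the irreducible variety $X$, it is the unique open $\overline{G}$-orbit, and hence $O_G=O_{\overline{G}}$.

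The only point that requires genuine checking is that $\Phi$ is closed; all the rest is formal. Two ingredients are needed: (i) $\mathrm{ev}_y$ is a morphism on every $\Sigma_s$, which is built into the ind-group action laid out just before the lemma, and (ii) an arbitrary intersection of Zariski closed subsets of the finite-dimensional variety $\Sigma_s$ remains Zariski closed, which holds by Noetherianity (the intersection over $y\in Y$ reduces to a finite one on each $\Sigma_s$). I do not expect any serious obstacle beyond being careful that each step genuinely takes place inside a single piece $\Sigma_s$ before passing to the direct limit.
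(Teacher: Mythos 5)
Your proof is correct and follows essentially the same route as the argument in \cite[Lemma~3.2]{AKZ-2} (the survey itself only cites the lemma): the stabilizer of a closed subset meets each $\Sigma_s$ in a closed subvariety because the action map $\Sigma_s\times X\to X$ is a morphism, hence the stabilizer contains $\overline{G}=\varinjlim\overline{(G\cap\Sigma_s)}$, and the statement about orbits follows by applying this to the closed invariant complement $X\setminus O_G$. (Your appeal to Noetherianity is superfluous --- arbitrary intersections of closed sets are closed in any topology --- but harmless.)
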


These observations in the case of algebraically generated subgroups allow us to prove the following.

\begin{proposition}[{\cite[Proposition~3.4]{AKZ-2}}] 
Let $G$ be an algebraically generated subgroup of the group $\Aut(X)$. Then
\begin{itemize}
\item[{\rm (a)}] the orbits of the groups $G$ and $\overline{G}$ on $X$ coincide. In particular, if $\overline{G}$ acts with an open orbit $O_{\overline{G}},$ then the same is true for $G$ and $O_{G}=O_{\overline{G}};$
\item[{\rm (b)}] if $\overline{G}$ acts $m$-transitively on $O_{\overline{G}},$ then the same is true for~$G;$
\item[{\rm (c)}] if $\overline{G}$ acts infinitely transitively on $O_{\overline{G}},$ then the same is true for~$G$.
\end{itemize}
\end{proposition}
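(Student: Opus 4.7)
My plan is to derive (a) from the preceding lemma combined with local closedness of orbits given by Proposition~\ref{ccff}(a), and then to reduce (b) and (c) to (a) by passing to the diagonal action on $X^m$.

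For part (a), the inclusion $G.x \subseteq \overline{G}.x$ is immediate. For the reverse, I would invoke Proposition~\ref{ccff}(a): since $G$ is algebraically generated, the orbit $G.x$ is locally closed in $X$ and hence open in its closure $Z := \overline{G.x}$. Set $Y := Z \setminus G.x$; being closed inside the closed subset $Z$ of $X$, it is closed in $X$. Both $Z$ and $Y$ are manifestly $G$-invariant, so by the preceding lemma they are also $\overline{G}$-invariant. Since $x \in Z \setminus Y = G.x$ and the set $Z \setminus Y$ is $\overline{G}$-stable, we conclude $\overline{G}.x \subseteq G.x$, yielding equality. The ``in particular'' clause follows at once: if $O_{\overline{G}}$ is an orbit of $\overline{G}$ that is open in $X$, then $O_{\overline{G}} = \overline{G}.x = G.x$ for any $x \in O_{\overline{G}}$.

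For part (b), I would apply (a) to the diagonal action of $G$ on $X^m$. Each connected algebraic subgroup $H \subseteq \Aut(X)$ in a generating family for $G$ acts algebraically on $X^m$ via its diagonal embedding, so the image $G^{(m)}$ of $G$ in $\Aut(X^m)$ is again algebraically generated. By (a) applied to $G^{(m)} \subseteq \Aut(X^m)$, its orbits on $X^m$ coincide with those of its ind-closure $\overline{G^{(m)}}$. Next I would observe that the diagonal homomorphism $\Aut(X) \to \Aut(X^m)$ is a morphism of ind-varieties: an automorphism of $X$ of degree $\le s$ with respect to a fixed embedding $X \hookrightarrow \AA^n$ induces a diagonal automorphism of $X^m$ of degree $\le s$ with respect to $X^m \hookrightarrow \AA^{nm}$. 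Hence the image of $\overline{G}$ lies in $\overline{G^{(m)}}$, and the chain of inclusions $G.y \subseteq \overline{G}.y \subseteq \overline{G^{(m)}}.y = G^{(m)}.y = G.y$ forces all $\overline{G}$-orbits on $X^m$ to coincide with $G$-orbits. Since $m$-transitivity of $\overline{G}$ on $O_{\overline{G}}$ is exactly transitivity of its diagonal action on the open subset of pairwise distinct $m$-tuples in $O_{\overline{G}}^m$, assertion (b) follows. Part (c) is then immediate by running (b) for every $m \ge 1$.

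The main obstacle, and essentially the only point requiring genuine verification beyond direct invocations of the preceding lemma, is the compatibility of the ind-filtrations under the diagonal embedding $\Aut(X) \hookrightarrow \Aut(X^m)$: one must check that the degree bounds behave well enough for $\overline{G}$ to be sent into $\overline{G^{(m)}}$, which amounts to confirming that a one-parameter family $\rho \colon \AA^1 \to \Sigma_s$ in $\Aut(X)$ pushes forward to a morphism $\AA^1 \to \Sigma_s^{X^m}$ in $\Aut(X^m)$. This is routine given the explicit description of $\Sigma_s$ via bounded-degree automorphisms, but it is indispensable for the reduction of (b) and (c) to (a).
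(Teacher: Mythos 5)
Your argument is correct and is essentially the intended one: part (a) combines the local closedness of orbits from Proposition~\ref{ccff}(a) with the preceding lemma (that closed $G$-invariant subsets are $\overline{G}$-invariant) exactly as the survey indicates, and parts (b) and (c) follow by the standard passage to the diagonal action on $X^m$, which is also how the cited source proceeds. The one point genuinely requiring verification --- that the diagonal embedding $\Aut(X)\to\Aut(X^m)$ respects the ind-filtrations so that $\overline{G}$ lands in $\overline{G^{(m)}}$ --- is precisely the one you identify, and your degree computation settles it.
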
 

Thus, the results of the second stage allow us to prove the infinite transitivity not for the action of the group $G$, but for the action of its closure.

At the third stage we guarantee that $\overline{G}$ contains extra root subgroups with respect to~$G$. This requires a certain degeneration technique. Namely, consider the locally nilpotent derivation $\partial$ of the algebra $\KK[X]$ corresponding to a $\GG_a$-subgroup of the group $G$, and define its Newton polyhedron $N(\partial)$ as the convex hull of weights of homogeneous components of the derivation $\partial$. Vertices of this polyhedron correspond to root subgroups, and these subgroups are contained in the group $\overline{G}$, see ~\cite[Corollary~4.17]{AKZ-2}.

In order to find a non-root subgroup with which to start this process, we conjugate one root subgroup in $G$ by another root subgroup that does not centralize the first one. The Newton polyhedron $N(\partial)$ for locally nilpotent derivation corresponding to the result of such conjugation turns out to be a segment whose ends correspond to new root subgroups in $\overline{G}$. This segment can be calculated explicitly using the Baker-Campbell-Hausdorff formula, see ~\cite[Corollary~4.14]{AKZ-2}. On this path, we come to the next technical result.

Let $X$ be an affine toric variety of dimension $\ge 2$. Below we use the terminology and notation introduced in Section~\ref{3.2}. Consider the Demazure roots $e_1,e_2$ of the cone of the variety $X$ corresponding to primitive vectors $p_1,p_2$ on the edges of the cone, and the corresponding root subgroups $H_{e_1},H_{e_2}$ in the group $\Aut(X)$. Let $d=\langle p_1, e_2\rangle+1$ and assume that $e_2+de_1$ is a Demazure root associated with the vector $p_1$, or, equivalently, that $\langle p_2, e_1\rangle>0$.

\begin{lemma}[{\cite[Lemma~4.18]{AKZ-2}}] \label{techkey} 
The root subgroup $H_{e_1+de_2}$ lies in the subgroup $\overline{\langle H_{e_1} , H_{e_2}\rangle}$.
\end{lemma}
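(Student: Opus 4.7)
The strategy is to produce a locally nilpotent derivation $D\in\LND(G)$, where $G=\langle H_{e_1},H_{e_2}\rangle$, whose Newton polyhedron is a non-degenerate segment with $e_1+de_2$ as one of its vertices. Once this is done, Corollary~4.17 of the paper (referenced above, extracting root subgroups at vertices of the Newton polyhedron of an LND inside $\overline{G}$) yields $H_{e_1+de_2}\subseteq\overline{G}$ immediately. So the entire problem reduces to manufacturing a single such $D$ explicitly.

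The construction of $D$ is by conjugating one given root derivation by the one-parameter group generated by the other. For $s\in\KK$, set $\alpha_s=\exp(s\partial_{e_2})\in H_{e_2}\subseteq G$ and consider
$$
D_s \;:=\; \alpha_s\,\partial_{e_1}\,\alpha_s^{-1} \;=\; \exp\bigl(s\cdot\operatorname{ad}\partial_{e_2}\bigr)(\partial_{e_1}) \;=\; \sum_{k\ge 0}\tfrac{s^k}{k!}\,\operatorname{ad}(\partial_{e_2})^k(\partial_{e_1}).
$$
By construction $D_s\in\LND(G)$ for every $s\in\KK$. Because $\partial_{e_2}$ is $T$-homogeneous of weight $e_2$, each summand $\partial^{(k)}:=\operatorname{ad}(\partial_{e_2})^k(\partial_{e_1})$ is $T$-homogeneous of weight $e_1+ke_2$. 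Every $T$-homogeneous derivation acts on characters by $\chi^m\mapsto f(m)\chi^{m+\mu}$ for some $\ZZ$-linear form $f$, so one can write $\partial^{(k)}(\chi^m)=f_k(m)\,\chi^{m+e_1+ke_2}$, with $f_0(m)=\langle p_1,m\rangle$; the Baker--Campbell--Hausdorff computation of Corollary~4.14 then produces an explicit recursion for $f_{k+1}$ in terms of $f_k$ and the four integers $\langle p_i,e_j\rangle$, $i,j\in\{1,2\}$.

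A finite induction, driven by the input pairings $\langle p_1,e_1\rangle=-1$, $\langle p_2,e_2\rangle=-1$, $\langle p_1,e_2\rangle=d-1$ and $\langle p_2,e_1\rangle>0$, shows that the sequence $(f_k)$ terminates at some $k=K$ and that at this extremal index the linear form $f_K$ is a nonzero scalar multiple of a single pairing $\langle p_\bullet,\cdot\rangle$ for one of the primitive ray generators $p_\bullet$ of the cone. By the hypothesis that $e_1+de_2$ is the Demazure root arising at the extremal weight, this identifies $\partial^{(K)}$ as a scalar multiple of the root derivation $\partial_{e_1+de_2}$. Consequently, for generic $s\neq 0$ the Newton polyhedron $N(D_s)$ is the non-degenerate segment $[e_1,\,e_1+de_2]\subset M_\QQ$ whose two vertices are Demazure roots. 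Applying Corollary~4.17 to $D_s$ then places both $H_{e_1}$ and the sought $H_{e_1+de_2}$ in $\overline{G}$, which finishes the proof.

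The main obstacle is the identification of $\partial^{(K)}$ as a genuine \emph{root} derivation, i.e.\ the verification that its coefficient function is proportional to a single $\langle p_\bullet,\cdot\rangle$ rather than to a nontrivial integer combination of the two relevant pairings. This is a careful but formal BCH bookkeeping in the graded Lie algebra of $T$-homogeneous derivations of $\KK[X]$; the strict inequality $\langle p_2,e_1\rangle>0$ is used precisely to prevent the extremal coefficient from accidentally vanishing, while the explicit value $d=\langle p_1,e_2\rangle+1$ ensures that the recursion lands on the correct weight. The rest of the argument is a direct appeal to the ind-topology closure mechanism encoded in Corollary~4.17.
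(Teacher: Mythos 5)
Your overall strategy is exactly the paper's: conjugate one root derivation by the $\GG_a$-subgroup generated by the other, expand $\mathrm{Ad}(\exp(s\,\partial))$ into $T$-homogeneous components via the adjoint series, show that the Newton polyhedron is a segment, and invoke Corollary~4.17 of \cite{AKZ-2} to place the root subgroup at the extremal vertex into the closure. The gap is in the ``careful but formal BCH bookkeeping'' that you defer: carried out honestly, it contradicts your claim. For your choice $D_s=\mathrm{Ad}(\exp(s\,\partial_{e_2}))(\partial_{e_1})$, the $k$-th component has weight $e_1+ke_2$ and coefficient $g_k$ satisfying $g_{k+1}(m)=(\langle p_2,e_1\rangle-k)\,g_k(m)-g_k(e_2)\,\langle p_2,m\rangle$ with $g_0=\langle p_1,\cdot\rangle$, because it is $\langle p_2,e_2\rangle=-1$ that decrements the weight pairing. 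Hence the series terminates at $K=\langle p_2,e_1\rangle+1$, \emph{not} at $K=d=\langle p_1,e_2\rangle+1$: one finds $g_K=-\langle p_1,e_2\rangle\,K!\,\langle p_2,\cdot\rangle$, so the Newton segment is $[e_1,\,e_1+(\langle p_2,e_1\rangle+1)e_2]$, the extremal root is attached to $p_2$, and the extremal coefficient is nonzero only if $\langle p_1,e_2\rangle>0$ --- a hypothesis the lemma does not grant (if $d=1$ your top term vanishes identically). Your claimed segment $[e_1,\,e_1+de_2]$ is correct only in the accidental case $\langle p_1,e_2\rangle=\langle p_2,e_1\rangle$. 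Tellingly, your own sentence assigning roles to the hypotheses (the value of $d$ fixing the terminal weight, $\langle p_2,e_1\rangle>0$ preventing vanishing) is true for the \emph{opposite} conjugation, which is the one the paper actually performs.

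The repair is to reverse the conjugation: set $D_s=\mathrm{Ad}(\exp(s\,\partial_{e_1}))(\partial_{e_2})=\sum_{k\ge 0}\frac{s^k}{k!}\,\mathrm{ad}(\partial_{e_1})^k(\partial_{e_2})$, whose components have weight $e_2+ke_1$. Since $\langle p_1,e_1\rangle=-1$, the coefficients obey $f_{k+1}(m)=((d-1)-k)\,f_k(m)-f_k(e_1)\,\langle p_1,m\rangle$ with $f_0=\langle p_2,\cdot\rangle$, which gives $f_d=-\langle p_2,e_1\rangle\,d!\,\langle p_1,\cdot\rangle$ and $f_{d+1}=0$. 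Thus $N(D_s)$ is the segment $[e_2,\,e_2+de_1]$, and the vertex component is the nonzero multiple $-\langle p_2,e_1\rangle\,d!\,\partial_{e_2+de_1}$ of a genuine root derivation --- this is precisely where $\langle p_2,e_1\rangle>0$ enters --- so Corollary~4.17 yields $H_{e_2+de_1}\subseteq\overline{\langle H_{e_1},H_{e_2}\rangle}$. This computation also reveals that the printed conclusion ``$H_{e_1+de_2}$'' is an index misprint for $H_{e_2+de_1}$: the setup preceding the lemma explicitly assumes that $e_2+de_1$ is a Demazure root associated with $p_1$ (equivalently $\langle p_2,e_1\rangle>0$), whereas $e_1+de_2$ with $d=\langle p_1,e_2\rangle+1$ is in general not a Demazure root at all, since $\langle p_2,e_1+de_2\rangle=\langle p_2,e_1\rangle-d$ need not equal $-1$. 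By taking the misprinted statement at face value and forcing the adjoint series to stop at the weight $e_1+de_2$, your argument proves neither the printed nor the intended statement; redirecting the conjugation as above repairs it with no other changes to your plan.
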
 

Finally, at the fourth stage~\cite[Section~5.3]{AKZ-2}, we find some finite collection of root subgroups $H_1,\ldots,H_k$ in the group $\Aut(X)$ and generate a subgroup $G$ by them. Repeatedly applying Lemma~\ref{techkey}, we find a countable set of root subgroups in the group $\overline{G}$, which satisfies the conditions of the first stage. This completes the proof of Theorem~\ref{tcod2}.

\begin{remark}
The condition of smoothness in codimension two is essentially used at the fourth stage of the proof of Theorem~\ref{tcod2} when constructing a collection of subgroups $H_1,\ldots,H_k$. Unfortunately, we cannot find a way to construct the required collection that would not use this condition.
\end{remark}

Let us formulate a generalization of Lemma~\ref{techkey} as a conjecture.

\begin{conjecture} [{\cite[Conjecture~5.23]{AKZ-2}}] 
Let $X$ be an irreducible affine variety. Consider the group $G=\langle H_1,\ldots,H_k\rangle$ generated by a finite collection of $\GG_a$-subgroups $H_i=\exp(\KK\partial_i)$ in $\Aut(X),$ where $\partial_i$ is a locally nilpotent derivation of the algebra $\KK[X]$. Then a $\GG_a$-subgroup $H=\exp(\KK\partial)$ lies in $\overline{G}$ if and only if the locally nilpotent derivation $\partial$ belongs to the Lie algebra generated by $\partial_1,\ldots,\partial_k$. 
\end{conjecture}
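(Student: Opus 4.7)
The plan splits into the two directions of the biconditional. The ``if'' direction (every locally nilpotent derivation in the Lie algebra generated by $\partial_1,\ldots,\partial_k$ exponentiates to a $\GG_a$-subgroup in $\overline{G}$) should be accessible by an extension of the Baker--Campbell--Hausdorff computation underlying Lemma~\ref{techkey}. The ``only if'' direction is where the main obstacle lies.

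For the ``if'' direction, set $\mathcal{L}=\{\partial\in\LND(\KK[X]):\exp(\KK\partial)\subseteq\overline{G}\}$. By construction $\partial_1,\ldots,\partial_k\in\mathcal{L}$, so it suffices to prove that $\mathcal{L}$ is stable under those $\KK$-linear combinations and Lie brackets that remain locally nilpotent. For a bracket, given $\partial,\partial'\in\mathcal{L}$ with $[\partial,\partial']$ locally nilpotent, one forms the two-parameter morphism
$$
F\colon\AA^2\to\overline{G},\qquad F(s,t)=\exp(s\partial)\exp(t\partial')\exp(-s\partial)\exp(-t\partial'),
$$
which lands in $\overline{G}$ because each factor does. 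The formal Baker--Campbell--Hausdorff expansion gives $F(s,t)=\exp(st[\partial,\partial']+\textrm{higher order})$; this expansion is polynomial on each finite-dimensional $\langle\partial,\partial'\rangle$-invariant subspace of $\KK[X]$ by local nilpotence, so it is literal rather than merely formal. Using an algebraic reparametrization together with the property, recorded at the start of Section~\ref{sec-4}, that a morphism $\AA^1\to\Aut(X)$ landing in $G$ on the punctured line sends $0$ to $\overline{G}$, one extracts the one-parameter subgroup $\{\exp(s[\partial,\partial']):s\in\KK\}\subseteq\overline{G}$. Closure under locally nilpotent sums is handled analogously through a product formula. Iterating yields $\mathcal{L}\supseteq\mathfrak{g}\cap\LND(\KK[X])$, where $\mathfrak{g}$ is the Lie subalgebra of $\mathrm{Der}(\KK[X])$ generated by $\partial_1,\ldots,\partial_k$.

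For the ``only if'' direction, the plan is to equip $\overline{G}$ with a Lie algebra $\mathrm{Lie}(\overline{G})$ consisting of the vector fields of the form $\rho'(0)$ for morphisms $\rho\colon\AA^1\to\overline{G}$ with $\rho(0)=\id$, and then to establish: (a) $\mathrm{Lie}(\overline{G})$ is closed under the Lie bracket of derivations and contains every $\partial$ with $\exp(\KK\partial)\subseteq\overline{G}$; (b) it contains $\partial_1,\ldots,\partial_k$; (c) every locally nilpotent element of $\mathrm{Lie}(\overline{G})$ already lies in $\mathfrak{g}$. Parts (a) and (b) should follow from the standard algebraic-group argument applied inside each layer $\Sigma_s$ of the ind-filtration on $\Aut(X)$, together with the results of Popov~\cite{Po} extending the theory of algebraically generated subgroups.

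The hard part will be (c). A priori, the ind-closure $\overline{G}$ may contain $\GG_a$-subgroups arising as unbounded-degree limits of long products of factors from $G$, and nothing in the setup forces the infinitesimal generators of such exotic subgroups to lie in the finitely generated Lie algebra $\mathfrak{g}$. A candidate line of attack is to filter $\KK[X]$ by the degree coming from a closed embedding $X\hookrightarrow\AA^n$, pass to the associated graded pieces --- each of which is a finite-dimensional representation on which $\overline{G}$ acts --- and apply classical algebraic group theory layer by layer, then lift back by a consistency argument. The principal technical challenge is showing that a locally nilpotent derivation whose action on every finite-dimensional piece lies in the image of $\mathfrak{g}$ must itself lie in $\mathfrak{g}$; this seems to require a new idea. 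Absent one, the realistic outcome may be an approximation version of the conjecture, where $\mathfrak{g}$ is replaced by its closure in an appropriate topology on $\mathrm{Der}(\KK[X])$.
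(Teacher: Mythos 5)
This statement is an open problem, not a theorem of the paper: it is quoted verbatim as \cite[Conjecture~5.23]{AKZ-2}, and the very next sentence of the survey states that ``currently, this conjecture has not been proved.'' So there is no proof in the paper to compare yours against, and your text is, by your own admission, a plan rather than a proof. Beyond that, both halves of your plan have concrete gaps. In the ``if'' direction, the Baker--Campbell--Hausdorff expansion of the group commutator gives $\exp\bigl(st[\partial,\partial']+R(s,t)\bigr)$ where the remainder $R$ consists of iterated brackets of order $\ge 3$ that do not vanish; extracting the exact one-parameter subgroup $\exp(\KK[\partial,\partial'])$ classically requires the analytic limit $\bigl(\exp(\tfrac{s}{n}\partial)\exp(\tfrac{t}{n}\partial')\exp(-\tfrac{s}{n}\partial)\exp(-\tfrac{t}{n}\partial')\bigr)^{n^2}\to\exp(st[\partial,\partial'])$, which is not available in the Zariski ind-topology. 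The degeneration device you allude to (the fact that $\rho(0)\in\overline{G}$ for a morphism $\rho\colon\AA^1\to\Aut(X)$ landing in $G$ off the origin, and the Newton-polyhedron argument of \cite[Corollaries~4.14 and~4.17]{AKZ-2}) is used in the paper only to produce \emph{homogeneous} root subgroups on a toric variety from vertices of $N(\partial)$; it does not isolate an arbitrary bracket. Moreover your induction through the set $\mathcal{L}$ breaks down structurally: a locally nilpotent element of $\mathfrak{g}$ is in general an iterated bracket whose intermediate stages are \emph{not} locally nilpotent, hence do not exponentiate to $\GG_a$-subgroups, so closure of $\mathcal{L}$ under ``brackets that happen to be locally nilpotent'' does not yield $\mathfrak{g}\cap\LND(\KK[X])\subseteq\mathcal{L}$.

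In the ``only if'' direction you correctly identify step (c) --- that a $\GG_a$-subgroup of $\overline{G}$ arising as an unbounded-degree limit must have its generator in the finitely generated Lie algebra $\mathfrak{g}$ rather than in some completion of it --- as requiring a new idea; this is precisely why the statement remains a conjecture. Your honest conclusion that only an ``approximation version'' may be within reach is consistent with the current state of the art, but it means the proposal should not be presented as a proof of the stated biconditional.
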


Currently, this conjecture has not been proved.

\subsection{The case of an affine space} The scheme of the proof of Theorem~\ref{tcod2} described above allows us to obtain more explicit results in the case $X=\AA^n$.

Let us start with the case $n\ge 3$. We fix a positive integer $l$ and consider a $\GG_a$-subgroup $F_l$ of transformations of the space $\AA^n$, which transforms coordinates according to the formula $(x_1+tx_2^l,x_2,\ldots,x_n)$, $t\in\KK$. Let $S_n$ be the symmetric group acting on $\AA^n$ by permutations of coordinates. Denote by $G_l$ the subgroup in $\Aut(\AA^n)$ generated by the subgroups $F_l$ and~$S_n$.

\begin{theorem}[{\cite[Theorem~5.2]{AKZ-2}}]
The group $G_2$ acts on the set $\AA^n\setminus\{0\}$ infinitely transitively.
\end{theorem}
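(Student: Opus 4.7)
The plan is to apply the four-stage scheme used in the proof of Theorem~\ref{tcod2} to the non-degenerate smooth affine toric variety $X = \AA^n$. Its cone $\sigma = \QQ_{\ge 0}^n$ has primitive ray generators $p_1,\ldots,p_n$ equal to the standard basis of $N = \ZZ^n$, with dual basis $\epsilon_1,\ldots,\epsilon_n$ of $M$. In these terms $F_2$ is the root subgroup $H_{e_{12}}$ associated with the Demazure root $e_{12} = -\epsilon_1 + 2\epsilon_2$; the corresponding locally nilpotent derivation is $x_2^2\,\partial/\partial x_1$. Conjugating $F_2$ by the coordinate permutations in $S_n \subset G_2$ places in $G_2$ all root subgroups $H_{e_{ij}}$ with $e_{ij} = -\epsilon_i + 2\epsilon_j$ for $i \neq j$; this is the finite collection of the fourth stage of the scheme.

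The next step is to pass to the ind-closure $\overline{G_2} \subset \Aut(\AA^n)$. By the proposition relating $G$ and $\overline{G}$ for algebraically generated subgroups, $G_2$ and $\overline{G_2}$ share orbits and transitivity degrees, so it suffices to prove infinite transitivity for $\overline{G_2}$. Apply Lemma~\ref{techkey} iteratively to populate $\overline{G_2}$ with additional root subgroups: for the pair $(e_{ij}, e_{ji})$ one has $\langle p_j, e_{ij}\rangle = 2 > 0$ and $d = \langle p_i, e_{ji}\rangle + 1 = 3$, so the lemma yields $H_{e_{ji} + 3 e_{ij}} = H_{-\epsilon_j + 5\epsilon_i} \subset \overline{G_2}$. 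Repeating with varying index pairs and with roots produced at earlier steps, one builds a countable family of root subgroups $H_e \subset \overline{G_2}$ indexed by Demazure roots associated with every primitive vector~$p_i$.

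At this point I would invoke the first stage of the scheme, i.e.\ the criterion of \cite[Theorem~2.2, Corollary~2.8]{AKZ-2}, which asserts that once $\overline{G_2}$ contains a sufficient countable family of root subgroups of the appropriate combinatorial type on a non-degenerate affine toric variety, it acts infinitely transitively on its open orbit. Every element of $G_2$ fixes the origin, and an elementary normalization argument (given $x$ with some coordinate, say $x_1$, non-zero, first apply $H_{e_{i1}}$ for $i \neq 1$ to move $x_i$ to any prescribed value, then use $H_{e_{12}}$ to zero out $x_1$) shows that $G_2$ already acts transitively on $\AA^n \setminus \{0\}$. Hence the open orbit of $G_2$ is exactly $\AA^n \setminus \{0\}$, and the infinite transitivity of $\overline{G_2}$ transfers back to $G_2$.

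The main obstacle is the bookkeeping in the third stage: each application of Lemma~\ref{techkey} produces a Demazure root with some strictly larger coordinate, so the recursion must be arranged so that the resulting countable family satisfies the precise combinatorial hypotheses of the first-stage criterion for every primitive vector $p_i$. The $S_n$-symmetry of the positive orthant makes this tractable, but a careful combinatorial argument in the spirit of \cite[Section~5.3]{AKZ-2} is still needed to ensure the family of produced roots is cofinal in the required sense.
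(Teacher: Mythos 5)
Your proposal follows exactly the four-stage scheme that the paper itself invokes for this theorem (realize $F_2$ and its $S_n$-conjugates as the root subgroups $H_{-\epsilon_i+2\epsilon_j}$, pass to the ind-closure, iterate Lemma~\ref{techkey} to produce a countable family of Demazure roots attached to every ray, and conclude via the first-stage criterion on the open orbit $\AA^n\setminus\{0\}$), so it is essentially the paper's argument. The only slip is notational: $e_{ji}+3e_{ij}=-\epsilon_i+5\epsilon_j$ rather than $-\epsilon_j+5\epsilon_i$, which is immaterial by the $S_n$-symmetry you already exploit.
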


Note that for $l\ne 2$ the action of the group $G_l$ on $\AA^n\setminus\{0\}$ is not even $2$-transitive. Indeed, for $l=1$, this action is linear and, therefore, preserves the property of a pair of vectors to be collinear. For $l>2$, we fix a primitive root of unity $\omega$ of degree $l-1$ and consider the set of pairs of points on $\AA^n\setminus\{0\}$ of the form $(P,\omega P)$. It is easy to check that transformations from the group $G_l$ send pairs of this form to pairs of the same form.

\smallskip

Now we come to the case $n=2$. Here we define two one-parameter subgroups of the automorphism group: the subgroup $F_l$ acts according to the formula $(x_1+t_1x_2^l,x_2)$, and the subgroup $R_s$ acts as $(x_1,x_2+t_2x_1^s)$, $t_1,t_2\in\KK$. Let $G_{l,s}$ be the subgroup in $\Aut(\AA^2)$ generated by the subgroups $F_l$ and $R_s$.

With $ls\!\ne\! 2$ the action of the group $G_{l,s}$ on $\AA^2$ is not even $2$-transitive. Indeed, for $l=0$ or $s=0$, the action preserves the differences of the corresponding coordinates. If $l=s=1$, then the action is linear. Finally, if $ls>2$, we fix the primitive root of unity $\omega$ of degree $ls-1$ and consider the set of pairs of points on $\AA^2\setminus\{0\}$ of the form $((x_1,x_2), (\omega x_1,\omega^sx_2))$. A direct check shows that transformations from the group $G_{l,s}$ send such pairs to pairs of the same type.

Thus, it remains to consider the case $ls=2$, or, up to permutation, $l=2$ and $s=1$. So far we have not been able to prove the infinite transitivity of the action of the group $G_{2,1}$ on the set $\AA^2\setminus\{0\}$ within the framework of the plan described above. After we formulated this property in a conjectural form, its proof was obtained using a different technique.

\begin{theorem}[{\cite[Corollary~21]{LPS}}] \label{twoinf} 
The action of the group $G_{2,1}$ on the set $\AA^2\setminus\{0\}$ is infinitely transitive. 
\end{theorem}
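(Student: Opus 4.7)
I would follow the four-stage scheme used to prove Theorem~\ref{tcod2}, specialised to the smooth toric surface $X = \AA^2$ with its two generators $F_2$ and $R_1$. In the standard toric structure, $F_2$ is the root subgroup $H_{e_1}$ attached to the Demazure root $e_1 = (-1,2)$ with $p_{e_1} = p_1 = (1,0)$, while $R_1 = H_{e_2}$ with $e_2 = (1,-1)$ and $p_{e_2} = p_2 = (0,1)$. Both generators fix the origin, and a direct calculation (starting from $(1,0)$ and alternating $R_1$ and $F_2$) shows that $G_{2,1}$ already acts transitively on $\AA^2 \setminus \{0\}$, so this set will be the open orbit of any overgroup we consider.

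First, I would pass to the ind-topological closure $\overline{G_{2,1}} \subseteq \Aut(\AA^2)$. By the transfer principles recalled in Subsection~4.1, orbits and $m$-transitivity for every $m$ descend from $\overline{G_{2,1}}$ to $G_{2,1}$, so it suffices to prove infinite transitivity for the closure. Second, I would apply Lemma~\ref{techkey} iteratively to harvest new root subgroups inside $\overline{G_{2,1}}$. The pair $(e_1, e_2)$ satisfies the hypothesis $\langle p_2, e_1\rangle = 2 > 0$, so the lemma deposits one new root subgroup into $\overline{G_{2,1}}$; swapping the roles of $e_1$ and $e_2$ satisfies the corresponding hypothesis $\langle p_1, e_2\rangle = 1 > 0$ and deposits a further root subgroup in the opposite equivalence class. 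Iterating these two moves is expected to populate both $\mathcal{R}_1$ and $\mathcal{R}_2$ with infinitely many Demazure roots whose root subgroups all lie in $\overline{G_{2,1}}$.

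The main obstacle is to verify that the family of root subgroups so produced is \emph{rich} enough, in the sense that the subgroup $G' \subseteq \overline{G_{2,1}}$ it generates is produced by a \emph{saturated} set of locally nilpotent vector fields --- precisely the hypothesis of Theorem~\ref{ccaa}. Saturation demands closure under taking replicas, whereas Lemma~\ref{techkey} yields root subgroups one at a time. The expected remedy is to combine the lemma with direct Baker--Campbell--Hausdorff expansions of words in $F_2$ and $R_1$ (as in the derivation of Lemma~\ref{techkey} itself in~\cite{AKZ-2}), arranging that inside $\overline{G_{2,1}}$ one finds, for each equivalence class, at least one root subgroup together with sufficiently many of its replicas that the $G'$-conjugation closure fills out a saturated family.

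Granting this step, Theorem~\ref{ccaa} applied to $G'$ delivers infinite transitivity of $G'$ on its open orbit, which must coincide with $\AA^2 \setminus \{0\}$ by the preliminary transitivity remark. The transfer principle of the first step then hands infinite transitivity back to $G_{2,1}$ on $\AA^2 \setminus \{0\}$, completing the proof. The hardest part is bridging the gap between the two explicit $\GG_a$-subgroups $F_2, R_1$ we start from and the saturated collection hypothesised by Theorem~\ref{ccaa}; this is exactly why Theorem~\ref{tcod2} gives no a priori estimate on how small the initial collection of $\GG_a$-subgroups can be.
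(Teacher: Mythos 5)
Your proposal is to run the four-stage scheme of Theorem~\ref{tcod2} (pass to the ind-closure, harvest root subgroups via Lemma~\ref{techkey} and Baker--Campbell--Hausdorff degenerations, then invoke the infinite-transitivity criterion for the resulting countable family). The toric bookkeeping in your first paragraph is correct: $F_2=H_{(-1,2)}$ with $p_1=(1,0)$, $R_1=H_{(1,-1)}$ with $p_2=(0,1)$, both pairings $\langle p_2,e_1\rangle=2$ and $\langle p_1,e_2\rangle=1$ are positive, and $G_{2,1}$ is indeed transitive on $\AA^2\setminus\{0\}$. But the step you flag as ``the main obstacle'' and then dispose of with ``the expected remedy is to combine the lemma with direct Baker--Campbell--Hausdorff expansions'' is precisely the step that does not go through: the survey states explicitly that the authors of~\cite{AKZ-2} were unable to prove infinite transitivity of $G_{2,1}$ on $\AA^2\setminus\{0\}$ ``within the framework of the plan described above,'' i.e., within exactly the scheme you propose. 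Iterating Lemma~\ref{techkey} starting from the single pair $(e_1,e_2)=((-1,2),(1,-1))$ produces only a sparse collection of new Demazure roots, and neither this collection nor its BCH-degenerations was shown to satisfy the hypotheses of the first-stage criterion (\cite[Theorem~2.2, Corollary~2.8]{AKZ-2}); no saturated family in the sense of Theorem~\ref{ccaa} is obtained either. So the proposal has a genuine gap at its central point, not a routine verification.

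The actual proof, due to~\cite{LPS}, requires a different ingredient available only for $n=2$: the interaction between the ind-group structure on $\Aut(\AA^2)$ and its amalgamated free product structure (Jung--van der Kulk), exploited through techniques around the Polydegree Conjecture. This yields \cite[Theorem~20]{LPS}: the closure $\overline{G_{2,1}}$ contains \emph{all} the subgroups $F_l$, $l\ge 2$, and from this abundance of root subgroups the infinite transitivity follows by the transfer principles you correctly cite. In other words, your stages one, two and four are sound, but stage three cannot be carried out by the degeneration technique of~\cite{AKZ-2}; it is replaced in~\cite{LPS} by a polydegree argument specific to the plane. The survey even remarks that finding a more elementary proof (e.g., one along the lines you sketch) remains an open problem.
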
 

The main idea of~\cite{LPS} is to study relationships between two structures on the group $\Aut(\AA^2)$, the structure of the ind-group and the structure of the amalgamated free product. Using a technique related to the so-called Polydegree Conjecture, the authors show that the closure of the subgroup $G_{2,1}$ contains all subgroups
$F_l$ for $l\ge 2$, see ~\cite[Theorem~20]{LPS}. This result implies Theorem~\ref{twoinf}. It would be interesting to find a more elementary proof of this theorem.~\footnote{A new proof and generalizations of this result are given in a recent preprint Chistopolskaya~A. and Taroyan~G., Infinite transitivity for automorphism groups of the affine plane, arXiv:2202.02214}

\smallskip 

Let us return to the case of an affine space $\AA^n$ of arbitrary dimension. We denote by $\Aff_n$ the group of affine transformations of the space $\AA^n$.
We also consider the group of tame automorphisms $\Tame_n$, see Section~\ref{1.3}.

\begin{definition}
An automorphism $h\in\Aut(\AA^n)$ is called \emph{co-tame} if $\langle \Aff_n, h\rangle=\Tame_n$, and \emph{topologically co-tame} if $\overline{\langle\Aff_n, h\rangle}=\Tame_n$.
\end{definition} 

The Edo Theorem, generalizing the results of Bodnarchuk and Furter, claims that for $n\ge 2$ any automorphism $h\in\Aut(\AA^n)\setminus\Aff_n$ is topologically co-tame. This result leads to the following theorem.

\begin{theorem}[{\cite[Theorem~5.12]{AKZ-2}}]
For $n\ge 2$ and any automorphism $h\in\Aut(\AA^n)\setminus\Aff_n$ the group $G=\langle\Aff_n, h\rangle$ acts on $\AA^n$ infinitely transitively.
\end{theorem}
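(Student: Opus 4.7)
The plan is to deduce the result from the Edo theorem just cited, which gives $\overline{G}=\Tame_n$ in the ind-topology on $\Aut(\AA^n)$, combined with Proposition~3.4(c) of \cite{AKZ-2}, which transfers infinite transitivity from $\overline{G}$ down to $G$. The first thing to verify is that $\Tame_n$ already acts infinitely transitively on $\AA^n$ for $n\ge 2$. This follows at once from Proposition~1 of Section~1.1: the three steps of its proof use only coordinate permutations and transformations of the form $(x_1+f(x_2,\ldots,x_n),\,x_2,\ldots,x_n)$, both of which are tame by definition. So the automorphism constructed there, sending any $m$-tuple of distinct points to the standard one, lies in $\Tame_n$; hence $\overline{G}=\Tame_n$ acts infinitely transitively on $\AA^n$.

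Having secured infinite transitivity of $\overline{G}$, I would apply Proposition~3.4(c) to push the property down to $G$. The hypothesis that must be checked is that $G$ is algebraically generated. Since the generator $h$ enters $G$ only as a single element, not a priori lying in a one-parameter algebraic subgroup, I first pass to
$$
G^\circ \;:=\; \bigl\langle h^k\,\Aff_n\,h^{-k}\,:\,k\in\ZZ\bigr\rangle \;\subseteq\; G,
$$
which is algebraically generated by the family of connected algebraic subgroups $\{h^k\Aff_n h^{-k}\}_{k\in\ZZ}$. A direct rewriting of an arbitrary word $a_0 h^{\epsilon_1} a_1\cdots h^{\epsilon_r} a_r$ in the generators of $G$, using the partial sums $N_i=\epsilon_1+\cdots+\epsilon_i$, shows that every element of $G$ has the form $g^\circ h^{N_r}$ with $g^\circ\in G^\circ$; thus $G^\circ$ is normal in $G$ with cyclic quotient, and $G\supseteq G^\circ$. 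Proposition~3.4(c) applied to the algebraically generated subgroup $G^\circ$ then yields infinite transitivity of $G^\circ$ on $\AA^n$, and hence of $G$, provided I know that $\overline{G^\circ}$ still acts infinitely transitively on $\AA^n$.

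The main obstacle is precisely this last point: showing $\overline{G^\circ}=\Tame_n$, rather than merely $\overline{G^\circ}\subseteq\Tame_n=\overline{G}$. Since $G^\circ$ is normal in $G$ and ind-closure respects continuous conjugation, $\overline{G^\circ}$ is an ind-closed normal subgroup of $\overline{G}=\Tame_n$ that contains $\Aff_n$ together with all its conjugates $h^k\Aff_n h^{-k}$. The remaining task is to argue, using the structure of $\Tame_n$ as the ind-group generated by $\Aff_n$ and any single nonaffine element, that no proper ind-closed subgroup normal in $\Tame_n$ can contain $\Aff_n$ together with all of its $h$-conjugates. I expect this ind-topological normal-subgroup step to be the technical heart of the argument; once it is settled, the theorem follows from the ingredients already assembled.
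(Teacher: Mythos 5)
Your route is the one the paper intends: Edo's theorem to get $\Tame_n\subseteq\overline{G}$ (only this inclusion is available and needed --- equality fails whenever $h$ is wild), the folklore interpolation argument to see that $\Tame_n$, hence $\overline{G}$, acts infinitely transitively (correct: that argument uses only compositions of elementary automorphisms), and then a descent from the ind-closure via \cite[Proposition~3.4]{AKZ-2}. You are also right to flag that this last proposition cannot be applied to $G=\langle\Aff_n,h\rangle$ itself, since $h$ is a single element not known to lie in any connected algebraic subgroup contained in $G$, so $G$ is not visibly algebraically generated; and your substitute $G^\circ=\langle h^k\Aff_n h^{-k}:k\in\ZZ\rangle$, together with the rewriting showing $G=G^\circ\langle h\rangle$ with $G^\circ$ normal and algebraically generated, is correct.

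The genuine gap is that the argument stops exactly where the work is: you never establish that $\overline{G^\circ}$ acts infinitely transitively, you only record that this ``remains to be argued.'' Without that input, Proposition~3.4(c) of \cite{AKZ-2} has nothing to descend from, so the proposal is a plan rather than a proof. Note also that you are aiming at more than is needed: $\overline{G^\circ}=\Tame_n$ is not required, only infinite transitivity of $\overline{G^\circ}$ on $\AA^n$, and the survey itself supplies the tool. The normalizer of the ind-closed subgroup $\overline{G^\circ}$ is ind-closed (for each fixed $u\in\overline{G^\circ}$ the condition $gug^{-1}\in\overline{G^\circ}$ is closed on every $\Sigma_s$) and contains $G$, hence contains $\overline{G}$; thus $\overline{G^\circ}$ is a normal subgroup of $\overline{G}$, nontrivial because it contains $\Aff_n$. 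Since $\overline{G}$ acts effectively and infinitely transitively on $\AA^n$, Theorem~\ref{norgr} yields infinite transitivity of $\overline{G^\circ}$, and \cite[Proposition~3.4]{AKZ-2} then transfers it to $G^\circ\subseteq G$. Alternatively, since $\Aff_n$ is self-normalizing in $\Aut(\AA^n)$, some conjugate $hah^{-1}$ with $a\in\Aff_n$ is again non-affine, and Edo's theorem applied to it gives $\Tame_n\subseteq\overline{\langle\Aff_n,hah^{-1}\rangle}\subseteq\overline{G^\circ}$ directly. Either closing move must actually be carried out before the theorem is proved.
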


Bodnarchuk, Levis and Edo showed that for $n\ge 3$ any triangular non-affine automorphism of the space $\AA^n$ is co-tame, whereas for $n=2$ a triangular automorphism cannot be  co-tame. A detailed presentation of these results and links to original works can be found in~\cite[Section~5.2]{AKZ-2}. It should be noted that back in 1995 in the work~\cite{Bod} Bodnarchuk related these results to the infinite transitivity of the action of subgroups of the automorphism group of an affine space.

\smallskip

It is natural to ask for what minimum value $k$ there is a group $G=\langle H_1,\ldots,H_k\rangle$ generated by $\GG_a$-subgroups that acts on the space $\AA^n$ infinitely transitively. It turns out that the value $k=3$ is suitable. For example, in the case $n=2$ one may consider the subgroup generated by $G_{2,1}$ and an arbitrary one-dimensional subgroup of parallel translations. Also for any $n\ge 2$ we construct the required triple of subgroups $H_1, H_2, H_3$ in ~\cite[Theorem~5.17]{AKZ-2}. This construction is based on a result of Chistopolskaya~\cite{Chi}, which states that for an arbitrary nonzero nilpotent matrix $x$, there is such a nilpotent matrix $y$ that $x$ and $y$ generate the Lie algebra $\sl_n$. Another triple of subgroups having the same property is constructed independently in~\cite[Theorem~10]{An}.

The question of whether the value $k=2$ is achievable for any $n\ge 2$ remains open.

\begin{remark}
In~\cite[Proposition~1.14]{FKZ} it is shown that for any flexible variety $X$ in the group $\SAut(X)$ there are two $\GG_a$-subgroups $H_1, H_2$ such that
the group $G$ generated by the subgroups $H_1, H_2$ and all their replicas acts on $X$ with an open orbit $O$, and the action of $G$ on $O$ is infinitely transitive.
\end{remark}

\subsection{Groups generated by root subgroups and the Tits alternative} At the end of this section, we outline briefly the results of the work~\cite{AZ}. The starting point was Demailly's question about whether it is possible to characterize the infinite transitivity property of the action of the group $G=\langle H_1,\ldots,H_k\rangle$ on its open orbit in terms of the growth of finitely generated subgroups of the group $G$.

In this context, it is natural to mention the classical result of Tits~\cite{Ti}: any finitely generated subgroup in a linear algebraic group over an arbitrary field contains either a solvable subgroup of finite index (the virtual solvability property) or a non-commutative free subgroup. If the ground field has characteristic zero, this alternative holds for any, not necessarily finitely generated subgroup.

We say that a group $G$ satisfies the \emph{Tits alternative} if any its subgroup is either virtually solvable or contains a non-commutative free subgroup. It is known that the Tits alternative holds for the group $\Aut(\AA^2)$ and, more generally, for the group of birational automorphisms of a compact complex K\"ahler surface, in particular, for the two-dimensional Cremona group. The alternative also holds for the automorphism group of a three-dimensional smooth affine quadric and for a number of other automorphism groups; references to the original works can be found, for example, in the Introduction to~\cite{AZ}.

For groups generated by root subgroups, there is a weaker result.

\begin{theorem}[{\cite[Theorem~1.1]{AZ}}] \label{tta} 
Let $X$ be an affine toric variety and $G=\langle H_1,\ldots,H_k\rangle$ be a subgroup of $\Aut(X)$ generated by a finite set of root subgroups. Then either $G$ is a unipotent linear algebraic group, or $G$ contains a non-commutative free subgroup.
\end{theorem}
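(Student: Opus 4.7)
The proof plan is to dichotomize according to whether the generating subgroups pairwise commute, computing everything from the Demazure root data of Section~\ref{3.2}. Write $H_i = H_{e_i}$ for roots $e_i \in \mathcal{R}(\sigma)$, with associated homogeneous locally nilpotent derivations $\partial_i = \partial_{e_i}$ on $\KK[X]$. The explicit formula $\partial_{e_i}(\chi^m) = \langle p_{e_i}, m\rangle\chi^{m+e_i}$ lets one read off each pairwise Lie bracket $[\partial_i, \partial_j]$ monomial by monomial, and in particular decide whether $[\partial_i, \partial_j] = 0$ identically on $\KK[X]$.

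If every bracket $[\partial_i, \partial_j]$ vanishes, then the product map
$$
\GG_a^k \to \Aut(X), \qquad (t_1, \ldots, t_k) \longmapsto \exp(t_1\partial_1)\cdots\exp(t_k\partial_k)
$$
is a regular morphism of ind-groups, and the commutation hypothesis turns it into a homomorphism from the commutative unipotent algebraic group $\GG_a^k$. Its image is exactly $G$, so $G$ is a quotient of $\GG_a^k$ and hence a connected commutative unipotent linear algebraic subgroup of $\Aut(X)$. This places $G$ in the first alternative.

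In the opposite case, some bracket $[\partial_1, \partial_2]$ is nonzero. The goal then is to produce a rank-two free subgroup inside $G$ by a ping-pong argument in the spirit of Tits. The plan is to find a small $T$-invariant closed subscheme $Y \subseteq X$ (for instance a two-dimensional toric surface cut out by the subcone spanned by $p_{e_1}$ and $p_{e_2}$, or an irreducible component thereof) on which both restrictions $H_1|_Y$ and $H_2|_Y$ remain non-trivial and non-commuting. On such a planar model the restricted actions of $H_1, H_2$ resemble a pair of non-commuting unipotent one-parameter subgroups of $\SL_2$ (or of $\Aut(\AA^2)$), where classical ping-pong yields explicit parameters $a, b \in \KK$ such that $u := \exp(a\partial_1)$ and $v := \exp(b\partial_2)$ freely generate a free subgroup of rank two; this freeness lifts back to $G$ provided the restriction $G \to \Aut(Y)$ is injective on $\langle u, v\rangle$.

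The main obstacle is carrying out this planar reduction in sufficient generality. Given an arbitrary pair of Demazure roots with $[\partial_1, \partial_2] \neq 0$, one must select a $T$-invariant locus on which neither action collapses to the identity and on which the two restrictions still fail to commute, and then verify the disjointness conditions of ping-pong from the numerical data $\langle p_{e_i}, e_j\rangle$. The intersection pattern of the kernels $p_{e_i}^\perp \cap \sigma^\vee$ and the possible coincidences $p_{e_1} = p_{e_2}$ (which already force commutation, by the discussion of equivalent roots in Section~\ref{3.2}) dictate which invariant subvarieties are available, and low-dimensional or singular configurations of $\sigma$ will most likely require a case analysis to rule out degenerate behavior.
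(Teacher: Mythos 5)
Your dichotomy is drawn in the wrong place, and this is a genuine gap rather than a presentational issue. You split into ``all brackets $[\partial_i,\partial_j]$ vanish'' versus ``some bracket is nonzero,'' and in the second case you set out to build a free subgroup. But non-commuting root subgroups can perfectly well generate a unipotent group. Computing the bracket from the formula $\partial_{e_i}(\chi^m)=\langle p_{e_i},m\rangle\chi^{m+e_i}$ gives
$$
[\partial_{e_1},\partial_{e_2}](\chi^m)=\bigl(\langle p_2,m\rangle\langle p_1,e_2\rangle-\langle p_1,m\rangle\langle p_2,e_1\rangle\bigr)\chi^{m+e_1+e_2},
$$
so for distinct rays the bracket vanishes identically iff $\langle p_1,e_2\rangle=\langle p_2,e_1\rangle=0$. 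The decisive invariant, however, is not whether the bracket vanishes but whether \emph{both} pairings $\langle p_1,e_2\rangle$ and $\langle p_2,e_1\rangle$ are positive. If exactly one of them is zero, the subgroups do not commute and yet $G$ is still unipotent: on $X=\AA^2$ take $e_1=(-1,0)$ and $e_2=(1,-1)$, giving $\partial_1=\partial/\partial x_1$ and $\partial_2=x_1\,\partial/\partial x_2$; here $\langle p_1,e_2\rangle=1$, $\langle p_2,e_1\rangle=0$, the subgroups $(x_1+t,x_2)$ and $(x_1,x_2+sx_1)$ fail to commute, but they generate the Heisenberg group $\{(x_1+a,\,x_2+bx_1+c)\}$, which is unipotent and contains no free subgroup. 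Your second branch would send this example hunting for a ping-pong table that does not exist. The correct statement, which is what the cited proof establishes, is: for $k=2$ the group contains a non-commutative free subgroup precisely when $\langle p_1,e_2\rangle>0$ \emph{and} $\langle p_2,e_1\rangle>0$ (and when both pairings are $\ge 2$ one even gets $G=H_1*H_2$), while if at least one pairing vanishes $G$ is unipotent though possibly non-commutative.

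A second, related omission: your first alternative only ever produces a \emph{commutative} unipotent group, so your argument has no mechanism for establishing unipotence of $G$ when $k\ge 3$ and the generators pairwise satisfy ``at least one of $\langle p_i,e_j\rangle,\langle p_j,e_i\rangle$ is zero'' without all brackets vanishing. That case is handled in the source by analyzing the Lie algebra generated by the homogeneous locally nilpotent derivations $\partial_1,\ldots,\partial_k$ and showing it is finite-dimensional and nilpotent; nothing in your proposal substitutes for this step. The planar-reduction/ping-pong idea for the genuinely free case is plausible in spirit, but as written it is conditional on an injectivity of restriction that you do not verify, and in any event it only becomes relevant after the dichotomy is repaired.
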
 

The question whether the group $G$ from Theorem~\ref{tta} satisfies the Tits alternative is open. At the same time, Theorem~\ref{tta} shows that the property of infinite transitivity of an action of a group on its open orbit cannot be characterized only in terms of maximal growth of subgroups: any non-unipotent group $G=\langle H_1,\ldots,H_k\rangle$ contains a non-commutative free subgroup, that is, a subgroup of the (maximal possible) exponential growth. Nevertheless, the following conjecture remains open.

\begin{conjecture}[{\cite[Conjecture~1]{AZ}}]
Let $X$ be an affine variety of dimension $\ge 2$. Consider a subgroup $G=\langle H_1,\ldots,H_k\rangle$ in $\Aut(X)$ generated by a finite collection of $\GG_a$-subgroups. Suppose that $G$ acts $2$-transitively on some of its orbits. Then $G$ contains a non-commutative free subgroup.
\end{conjecture}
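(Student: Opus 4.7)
The plan is to argue by contradiction, assuming that $G=\langle H_1,\ldots,H_k\rangle$ acts $2$-transitively on some orbit $O$ but contains no non-commutative free subgroup.

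The first step is geometric. By Proposition~\ref{ccff}, the orbit $O$ is locally closed and equals $H_{i_1}.(H_{i_2}\cdots(H_{i_s}.x))$, so it is the image of a connected irreducible variety under a morphism and is itself irreducible. Since $2$-transitivity forces $|O|\ge 2$, we must have $\dim O\ge 1$; in particular $O$ is infinite. By Lemma~\ref{prtr} from Appendix~B, every nontrivial normal subgroup of $G$ then acts transitively on $O$.

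Next, I would apply this observation to the derived series $G\supseteq G'\supseteq G''\supseteq\cdots$. Each term is characteristic, hence normal in $G$, so is either trivial or transitive on $O$. Suppose the derived series terminates, so $G$ is solvable and the last nontrivial term $A:=G^{(d)}$ is a nontrivial abelian normal subgroup acting transitively on $O$. After replacing $G$ by its faithful image on $O$ (which remains generated by abelian one-parameter subgroups) one may assume the abelian action of $A$ on $O$ is regular, so $|O|=|A|$ and the point stabilizer $G_x$ must act transitively by conjugation on $A\setminus\{1\}$. This configuration mimics the affine group $\GG_a\rtimes\GG_m$ acting on the line. The crucial constraint is that $G$ is generated by unipotent $\GG_a$-subgroups, so any ``scaling'' element of $G_x$ needed to act transitively on $A\setminus\{1\}$ would have to arise as a product of unipotents whose non-trivial semisimple part comes from iterated commutators. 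I would attempt to show that producing such scaling elements out of $\GG_a$-subgroups always generates, along the way, a non-abelian free subgroup, by a concrete analysis of commutators of one-parameter subgroups in the spirit of the $\SL_2$-ping-pong arguments familiar from the linear Tits alternative. This would rule out the solvable case.

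The hard part remains the non-solvable case, where the derived series of $G$ never terminates. For subgroups of finite-dimensional linear algebraic groups over a field of characteristic zero, Tits's alternative immediately forces the presence of a non-abelian free subgroup, but in our setting $G$ lies in the ind-group $\Aut(X)$ and need not admit a faithful finite-dimensional representation. For toric $X$ this is exactly where Theorem~\ref{tta} enters: there $G$ is either unipotent algebraic (handled by the above argument, since a unipotent algebraic group has nontrivial centre and cannot act $2$-transitively on an infinite irreducible orbit) or contains a non-commutative free subgroup, yielding the conjecture in the toric case. For general $X$ I would try to linearize $G$ by constructing a $G$-invariant finite-dimensional subspace $V\subset\KK[X]$ whose restriction to $O$ separates points, and then invoke the Tits alternative for $\GL(V)$. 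Producing such a $V$ for a group generated by only finitely many $\GG_a$-subgroups---an abstract group, not an algebraic subgroup---is delicate; this construction, or an equivalent argument at the level of the Lie algebra generated inside $\mathrm{Vec}(X)$ by the locally nilpotent derivations $\partial_1,\ldots,\partial_k$, is where I would expect the main technical obstruction.
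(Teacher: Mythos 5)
The statement you set out to prove is not a theorem of the paper: it is \cite[Conjecture~1]{AZ}, and the survey explicitly records it as open (``Nevertheless, the following conjecture remains open''). There is therefore no proof in the paper to compare yours against. The only established result in this direction is Theorem~\ref{tta}, which concerns \emph{root} subgroups of an affine toric variety --- that is, $\GG_a$-subgroups normalized by the acting torus, a genuine restriction --- and which, notably, does not use the $2$-transitivity hypothesis at all. So even in the toric case your claim that Theorem~\ref{tta} ``yields the conjecture'' overstates what is known: the conjecture allows arbitrary $\GG_a$-subgroups, to which Theorem~\ref{tta} does not apply.

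As for the proposal itself, it is a programme rather than a proof, and the gaps you flag are exactly the ones that keep the conjecture open. In the solvable case, the assertion that any ``scaling'' element of $G_x$ acting transitively on $A\setminus\{1\}$ must, when manufactured out of unipotent one-parameter subgroups, drag along a free subgroup is not an argument but a restatement of the difficulty; moreover $A$ need not be the additive group of a vector space and $G_x$ need not contain anything resembling a torus, so the ping-pong input you want to borrow from the linear Tits alternative is unavailable. In the non-solvable case you correctly identify linearization as the obstruction, but the proposed remedy --- a finite-dimensional $G$-invariant subspace $V\subseteq\KK[X]$ separating points of $O$ --- should not be expected to exist: a group generated by finitely many $\GG_a$-subgroups typically preserves no finite-dimensional subspace of $\KK[X]$ (already for $G_{2,1}=\langle F_2,R_1\rangle\subseteq\Aut(\AA^2)$ in the notation of Section~4.2, iterating the two generators drives degrees to infinity), which is precisely why \cite{AKZ-2} and \cite{AZ} work instead with the ind-group closure $\overline{G}$ and with the Lie algebra generated by the locally nilpotent derivations. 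The sound parts of your sketch are the preliminary reductions: irreducibility and infiniteness of the orbit via Proposition~\ref{ccff}, the use of Lemma~\ref{prtr} on normal subgroups, and the observation that a nontrivial unipotent algebraic group cannot act $2$-transitively because its centre is nontrivial. These are worth keeping, but they leave the core of the conjecture untouched.
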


In the proof of Theorem~\ref{tta}, we start with the case $k=2$. If the subgroups $H_1$ and $H_2$ commute elementwise, the group $G$ is commutative and unipotent. Therefore, we can assume that the Demazure roots $e_1, e_2$ corresponding to the subgroups $H_1, H_2$ are associated with primitive vectors $p_1, p_2$ on different rays of the cone of the affine toric variety $X$. In this case, everything is determined by the values of the pairings $\langle p_1, e_2\rangle$ and $\langle p_2, e_1\rangle$: if at least one of these pairings is zero, the group $G$ turns out to be unipotent, and in the case when both pairings are positive, we prove the existence of a non-commutative free subgroup in the group $G$. For example, if both pairings are $\ge 2$, the group~$G$ turns out to be a free product of the subgroups $H_1, H_2$ and any two nonunit elements $h_1\in H_1$ and $h_2\in H_2$ generate a free subgroup; see details in~\cite[Section~3]{AZ}.

In the case $k\ge 3$, we can assume without loss of generality that for any pair $e_i,e_j$ of Demazure roots corresponding to root subgroups of the group $G$, at least one of the pairings, $\langle p_i, e_j\rangle$ or $\langle p_j, e_i\rangle$, is zero. In this situation, we prove that the group $G$ is unipotent~\cite[Proposition~4.8]{AZ}. This result is based on the study of the Lie algebra generated by a finite collection of homogeneous locally nilpotent derivations.

The methods used here are close to the methods of~\cite{ALS}, where a criterion of the finite dimensionality of a Lie algebra generated by a finite collection of homogeneous locally nilpotent derivations is obtained, and the structure of such finite-dimensional Lie algebras is described. In particular, the results of~\cite{ALS} generalize the well-known Demazure Theorem~\cite{De}: the Lie algebras arising here have type~A, that is, the semisimple parts of such Lie algebras are direct sums of Lie algebras~$\sl$.

\section{Appendix A. Infinitely transitive actions in complex analysis}
\label{ap-b}

In this section we consider complex-analytic versions of flexibility and infinite transitivity of the action of the automorphism group, and also discuss connections of these concepts and the constructions studied above with the Andersen-Lempert theory and the concept of Gromov sprays.

\begin{definition}[{\cite[Section~1.1.B]{Gr}}]
Let us consider a complex manifold~$X$. 
\begin{itemize}
\item[(i)]
A \emph{dominating spray} on $X$ is a holomorphic vector bundle $\rho\colon E\to X$ with a holomorphic map $s\colon E\to X$ such that its restriction $s$ to the zero section $Z$ is the identical map, and for each point ${x\in Z\cong X}$ the tangent mapping $d_x s$ sends the fiber $E_x\!:=\!\rho^{-1}(x)$ (viewed as a subspace of the tangent space $T_xE$) to the tangent space $T_xX$ surjectively.
\item[(ii)]
Let $h\colon X\to B$ be a surjective submersion of complex manifolds. We say that $h$ admits a \emph{fiber dominating spray} if there exists a holomorphic vector bundle $E$ over $X$ and a holomorphic map $s\colon E\to X$ such that the restriction of $s$ on each fiber $h^{-1}(b), b\in B$ yields a dominating spray on this fiber.
\end{itemize}
\end{definition}

In these terms, the well-known Oka-Grauert-Gromov principle can be stated as follows.

\begin{theorem}[{\cite[Section~4.5]{Gr}}] 
Consider a surjective submersion
$$
{h\colon X\to B}
$$
of Stein manifolds. If $h$ admits a fiber dominating spray, then
\begin{itemize}
\item[(a)]
any continuous section of the mapping $h$ is homotopic to a holomorphic section;
\item[(b)]
if two holomorphic sections are homotopic in the class of continuous maps, then they are also homotopic holomorphically.
\end{itemize}
\end{theorem}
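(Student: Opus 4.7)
The plan is to follow the classical approach (Gromov, later refined by Forstnerič) and reduce the global homotopy problem to a sequence of local approximation and extension problems on an exhaustion of $B$ by compact $\mathcal{O}(B)$-convex subsets $K_1 \Subset K_2 \Subset \ldots$, available because $B$ is Stein. The key insight is that the fiber dominating spray $s\colon E \to X$ allows us to \emph{linearize} the nonlinear space of sections of $h$ near any given holomorphic section: if $\sigma\colon U \to X$ is a holomorphic section over $U \subseteq B$, then any holomorphic section $v$ of the pullback vector bundle $\sigma^*E \to U$ that is small enough near the zero section defines a new holomorphic section $s \circ v$ of $h$ over $U$. Because $\sigma^*E$ is a coherent analytic sheaf over a Stein open set, Cartan's Theorem B and the Oka--Weil approximation theorem for vector bundles become available for the modified problem.

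For part (a), given a continuous section $f_0\colon B\to X$, I would build the holomorphic section and the homotopy inductively. At stage $n$ I would have a continuous section $f_n\colon B \to X$ that is holomorphic on an open neighborhood of $K_n$, and I would need to produce $f_{n+1}$, holomorphic near $K_{n+1}$, together with a homotopy from $f_n$ to $f_{n+1}$ that is sufficiently $C^0$-small on $K_n$ so that the sequence $(f_n)$ converges uniformly on compacts and the limit is holomorphic on each $K_n$. This step splits into two substeps: an \emph{approximation} that replaces the restriction of $f_n$ to a neighborhood of $K_n$ by a holomorphic section defined on a neighborhood of $K_{n+1}$, and a \emph{continuous extension} that glues this new local holomorphic section to the old continuous section $f_n$ outside $K_{n+1}$.

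The approximation substep is where the spray is essential. Over a Stein neighborhood $W$ of $K_n$ on which $f_n$ is holomorphic, the pullback $f_n^*E \to W$ is a holomorphic vector bundle, so any holomorphic section of it over $K_n$ can be uniformly approximated on $K_n$ by a holomorphic section over a Stein neighborhood of $K_{n+1}$ (Oka--Weil with coefficients in a coherent sheaf, ultimately Cartan's Theorem B). The fiberwise dominating property $d_x s\colon E_x \twoheadrightarrow T_x^{\mathrm{vert}}X$ allows the implicit function theorem to identify a $C^0$-neighborhood of the zero section in sections of $f_n^*E$ with a $C^0$-neighborhood of $f_n$ in holomorphic sections of $h$ over $W$; thus approximation of sections of $f_n^*E$ translates into approximation of sections of $h$. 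The continuous extension substep uses that the problem of extending a continuous section across $B \setminus K_{n+1}$ agreeing with $f_n$ there is purely topological and trivially solvable, and the same spray-linearization applied to the difference makes the adjustment arbitrarily $C^0$-small. Iterating and taking the limit yields a holomorphic section $f_\infty$ of $h$ on all of $B$, homotopic to $f_0$ through the concatenation of the intermediate homotopies.

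For part (b), I would run the same construction parametrically on the map $H\colon B \times [0,1] \to X$, treating $[0,1]$ either as an auxiliary parameter (with a parametric version of Cartan B and of the Oka--Weil theorem for smooth families of coherent sheaves) or after complexifying in a strip. Since the prescribed holomorphic sections at $t=0$ and $t=1$ can be kept fixed throughout the induction (the spray correction at these endpoints is taken to be zero), the resulting deformation is the desired holomorphic homotopy. The hard part will be the spray-based approximation-extension step: one must quantitatively control the spray corrections so that the inductive sequence converges uniformly on compacts, verify that \emph{fiberwise} dominating suffices to produce sections of $h$ (rather than merely sections of the fibers over fixed points), and manage the bookkeeping of shrinking neighborhoods so that each $K_n$ remains in the domain of holomorphy of the limit. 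The parametric refinement in part (b) requires an extra layer of uniformity in $t$, which is the most technically demanding piece but is standard once the absolute version is in place.
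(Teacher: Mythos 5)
The paper does not prove this statement at all: it is quoted verbatim from Gromov's work \cite[Section~4.5]{Gr} as a known black box (the only proof the survey supplies in this vicinity is of the \emph{subsequent} proposition, namely that flexible affine varieties admit dominating sprays, so that Gromov's theorem applies to them). So there is no in-paper argument to compare yours against; I can only assess your sketch against the standard proof in the literature (Gromov; Forstneri\v{c}--Prezelj).

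Your outline correctly identifies the global scheme (exhaustion of $B$ by $\mathcal{O}(B)$-convex compacts, spray-based linearization, Oka--Weil/Cartan~B for the linearized problem, inductive approximation with $C^0$-small corrections, parametric version for (b)). But there is a genuine gap at the core of the induction step. You describe the passage from a section holomorphic near $K_n$ to one holomorphic near $K_{n+1}$ as an ``approximation'' by Oka--Weil on sections of $f_n^*E$, plus a ``purely topological and trivially solvable'' continuous extension. This begs the question: on $K_{n+1}\setminus K_n$ the section $f_n$ is merely continuous, so there is no holomorphic object there to approximate, and Oka--Weil cannot create one. The actual content of the step is (i) a local solution over a convex bump $C$ attached to $A \supseteq K_n$, obtained from the spray and the implicit function theorem, and then (ii) the \emph{gluing} of two holomorphic sections, one over $A$ and one over $C$, which are close on $A\cap C$, into a single holomorphic section over $A\cup C$. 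This gluing is the nonlinear Cartan splitting lemma for sprays: one writes the transition as a composition of two corrections, each holomorphic on one piece, via a Banach implicit function theorem whose linearization is the additive Cousin/Cartan splitting. That lemma is the analytic heart of the theorem and is entirely absent from your plan; without it the induction does not close. A second, smaller omission: crossing critical levels of the exhaustion changes the topology of the sublevel sets, and there the continuous section must be deformed over an attached handle before the holomorphic machinery restarts --- this is where the hypothesis that $h$ is a \emph{submersion with a fiber-dominating spray} (and not just that each fiber is dominable) is really used, together with the homotopy-theoretic input for part (a).
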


The following result shows that this principle is applicable to smooth affine algebraic $G$-bundles with flexible fibers.

\begin{proposition}[{\cite[Proposition~A.3]{AFKKZ-1}}] \mbox{}\newline
\vspace{-5mm}
\begin{itemize}
\item[(a)]
Every smooth flexible affine complex algebraic variety $X$ admits a dominating spray.
\item[(b)]
Let $h\colon X\to B$ be a surjective submersion of smooth affine complex algebraic varieties. Assume that there exists an algebraically generated subgroup $G\subseteq\Aut(X),$ whose orbits coincide with the fibers of $h$. Then the mapping ${X\to B}$ admits a fiber dominating spray.
\end{itemize}
\end{proposition}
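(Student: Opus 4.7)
My plan is to construct both sprays directly by exponentiating a suitably chosen finite family of generators, using the existence of uniform generators provided by earlier results in the paper.

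For part~(a), since $X$ is smooth and flexible, Theorem~\ref{tmain} gives that $\SAut(X)$ acts transitively on $X=X^{\reg}$. I apply Corollary~\ref{c121} to $G=\SAut(X)$ with $\cN=\LND(\SAut(X))$ (evidently invariant under conjugation in $G$) and obtain locally nilpotent vector fields $\partial_1,\ldots,\partial_s$ such that the values $\partial_1(x),\ldots,\partial_s(x)$ span $T_xX$ at every point~$x$. Take $E=X\times\CC^s$ as the trivial (algebraic, hence holomorphic) vector bundle over $X$ and define
$$
s(x,t_1,\ldots,t_s)=\bigl(\exp(t_1\partial_1)\circ\cdots\circ\exp(t_s\partial_s)\bigr).x .
$$
Each $\exp(t_i\partial_i)$ is the $\GG_a$-action associated with $\partial_i$, so $s$ is a regular morphism. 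Its restriction to the zero section is the identity, and at $(x,0)$ the partial derivative in the $t_i$-direction equals $\partial_i(x)$; hence $d_xs$ carries $E_x$ onto $T_xX$ surjectively, as required.

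For part~(b), I fix a conjugation-invariant generating family $S$ of connected algebraic subgroups for $G$. Proposition~\ref{p18} then provides $H_1,\ldots,H_s\in S$ such that $T_x(H_1.x)+\cdots+T_x(H_s.x)=T_x(G.x)$ for every $x\in X$. Since the $G$-orbits coincide with the fibers of the submersion~$h$, this common tangent space equals $\ker(d_xh)$. Let $\mathfrak{h}_i$ denote the Lie algebra of $H_i$, set $V=\bigoplus_i\mathfrak{h}_i$, and take $E=X\times V$ as a trivial holomorphic vector bundle. Define
$$
s(x,\xi_1,\ldots,\xi_s)=\bigl(\exp(\xi_1)\cdots\exp(\xi_s)\bigr).x,
$$
where each $\exp\colon\mathfrak{h}_i\to H_i$ is the holomorphic Lie group exponential. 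Each $H_i\subseteq G$ preserves every $G$-orbit setwise, hence every fiber of $h$; therefore $s$ maps $E|_{h^{-1}(b)}$ into $h^{-1}(b)$ and restricts to a spray on each fiber. The partial derivatives of $s$ at the zero section are the vector fields on $X$ induced by basis elements of the $\mathfrak{h}_i$, so by construction they span $\ker(d_xh)$ at every point.

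The main substantive input is the existence, provided by Corollary~\ref{c121} and Proposition~\ref{p18}, of finitely many generators whose infinitesimal actions span the relevant tangent spaces \emph{simultaneously} at every point of~$X$; once this is available, the spray is built essentially by exponentiation. The only genuine remaining check is the holomorphy (respectively regularity) of the map $s\colon E\to X$, which for part~(a) follows from the fact that each $\GG_a$-action is a morphism of varieties, and for part~(b) from holomorphy of the Lie group exponentials composed with the algebraic action maps $H_i\times X\to X$.
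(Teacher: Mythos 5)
Your proof is correct and follows essentially the same route as the paper: choose finitely many algebraic subgroups via Proposition~\ref{p18} whose orbit tangent spaces span $T_x(G.x)$ at every point, form the trivial bundle over $X$ with fiber the product of their Lie algebras, and define the spray by composing exponentials. The only cosmetic difference is that the paper deduces (a) from (b) (taking $B$ a point), whereas you prove (a) directly via Corollary~\ref{c121} and $\GG_a$-actions, which amounts to the same construction.
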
 

\begin{proof}
It is clear that assertion~(a) follows from assertion~(b). To prove (b), we consider a sequence of algebraic subgroups 
$$
\OH=(H_1,\ldots,H_s)
$$
in the group $G$ such that the tangent space to the orbit $Gx$ at each point $x\in X$ is generated by tangent vectors to the orbits $H_ix$, ${i=1,\ldots,s}$. Let $\text{exp}\colon T_1(H_i)\to H_i$ be the exponential map. Consider the trivial vector bundle
$$
E=\prod_{i=1}^s T_1(H_i)\times X
$$
over $X$ and the morphism
$$
s\colon E\to X, \quad \big((h_1,\ldots,h_s),x\big)\mapsto \exp(h_1)\ldots\exp(h_s)x. 
$$
It is not difficult to see that this is a fiber dominating spray. 
\end{proof}

Now we come to the definition of holomorphic flexibility. Recall that a holomorphic vector field on a complex manifold $X$ is called \emph{completely integrable} if
its phase flow determines a holomorphic action of the group $(\CC,+)$ on $H$.

\begin{definition}[{\cite[Definition~A.4]{AFKKZ-1}}]
\mbox{}\newline
\vspace{-5mm}
\begin{itemize}
\item[(i)]
A Stein manifold $\!X\!$ is called \emph{holomorphically flexible} if completely integrable holomorphic vector fields on $X$ generate the tangent space $T_xX$ at every smooth point on $X$.
\item[(ii)]
Consider a holomorphic submersion $h\colon X\to B$ of Stein manifolds. We say that $X$ is \emph{holomorphically flexible over $B$} if completely integrable relative holomorphic vector fields on $X$ generate the relative tangent bundle for $X\to B$ at each point $x\in X$. In this case, each fiber $h^{-1}(b), b\in B$ is a holomorphically flexible Stein manifold.
\end{itemize}
\end{definition}

For example, the vector field $\delta=z\frac{d}{dz}$ on $X=\CC\setminus\{0\}$ is completely integrable. However, the derivation $\delta$ is not locally nilpotent. The manifold $\CC\setminus\{0\}$ is not flexible in the sense of the main part of this work, but it is holomorphically flexible.

In the context of holomorphic actions, it also can be shown that if $X$ is a connected Stein manifold that is holomorphically flexible over a Stein manifold $B$, then the relative tangent bundle to $X$ over $B$ is generated by a finite number of completely integrable relative holomorphic vector fields on $X$ \cite[Lemma~A.6]{AFKKZ-1}. Repeating the reasoning given above, we show that if a Stein manifold $X$ is holomorphically flexible over a Stein manifold $B$, then the map $h\colon X\to B$ admits a fiber dominating spray~\cite[Corollary~A.7]{AFKKZ-1}. In particular, the Oka-Grauert-Gromov principle applies to the map $h\colon X\to B$.

It should be noted that the question of whether the group of holomorphic automorphisms acts on a flexible connected Stein manifold $X$ of dimension $\ge 2$ infinitely transitively remains open~\cite[Problem~A.8]{AFKKZ-1}. In \cite{AFKKZ-1} it is shown that such an action is transitive: it follows from the implicit function theorem that all orbits of the group of holomorphic automorphisms are open in $X$ in the standard Hausdorff topology, and so there is only one orbit.

At the same time, the infinite transitivity of an action can be proved under stronger restriction. To formulate the corresponding result, we need to introduce several concepts from the Andersen-Lempert theory; see, for example, \cite{KK,Va}.

\begin{definition}\mbox{}\newline
\vspace{-6mm}
\begin{itemize}
\item[(i)]
A complex manifold $X$ has the \emph{density property} if the Lie algebra generated by all completely integrable holomorphic vector fields is dense in the Lie algebra of all holomorphic vector fields on $X$ in the compact-open topology.
\item[(ii)]
An affine algebraic variety has the \emph{algebraic density property} if the Lie algebra generated by all completely integrable algebraic vector fields on $X$ coincides with the Lie algebra of all algebraic vector fields on~$X$.
\end{itemize}
\end{definition}

\begin{theorem}[{\cite{KK,Va}}] 
If a connected Stein manifold $X$ of dimension ${\ge 2}$ has the density property, then the group of holomorphic automorphisms of the manifold $X$ acts on $X$ infinitely transitively. Moreover, for any discrete subset $Z\subseteq X$ and any Stein space $Y$ of positive dimension that admits a proper embedding into $X,$ there is another proper embedding $\varphi\colon Y\hookrightarrow X$ such that $Z\subseteq\varphi(Y)$.
\end{theorem}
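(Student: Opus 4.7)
The proof rests on the \emph{Andersen-Lempert-Forstneri\v{c} approximation theorem}: on a Stein manifold $X$ with the density property, any smooth isotopy $\phi_t\colon\Omega\to X$ of holomorphic injections defined on a Runge open subset $\Omega\subseteq X$, with $\phi_0$ the inclusion and each $\phi_t(\Omega)$ Runge in $X$, can be approximated uniformly on compact subsets of $\Omega$ by holomorphic automorphisms of $X$; in its jet-interpolation form one may moreover prescribe the values and finitely many derivatives of the approximating automorphism at finitely many points of $\Omega$, provided these jets are consistent with some isotopy. The plan is to deduce $m$-transitivity by applying this theorem to an isotopy that moves the source tuple to the target tuple along disjoint paths, and then to realize the interpolation of proper embeddings by a Fatou-Bieberbach-style inductive construction.

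For infinite transitivity, first observe that $\Aut_{\mathrm{hol}}(X)$ acts transitively: at each $x\in X$ the density property supplies completely integrable holomorphic vector fields whose values span $T_xX$, so a composition of their time-$t$ flows maps a neighborhood of the origin of the parameter space onto a neighborhood of $x$ by the implicit function theorem. Hence the orbits are open in the Hausdorff topology, and connectedness of $X$ forces a single orbit. For $m$-transitivity, given distinct $m$-tuples $(p_1,\ldots,p_m)$ and $(q_1,\ldots,q_m)$, connect each $p_i$ to $q_i$ by a smooth embedded arc $\gamma_i$ chosen to be pairwise disjoint and to avoid the other $p_j$ and $q_j$; this is possible because $\dim X\ge 2$. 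Construct a smooth isotopy of holomorphic injections defined on a Runge neighborhood $\Omega$ of $\bigcup_i\gamma_i$ which sends $p_i$ along $\gamma_i$ to $q_i$, and apply the jet-interpolation form of the approximation theorem with prescribed $0$-jets $q_i$ at $p_i$ to obtain $F\in\Aut_{\mathrm{hol}}(X)$ with $F(p_i)=q_i$ for all $i$.

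For the proper embedding statement, enumerate $Z=\{z_1,z_2,\ldots\}$, fix Runge exhaustions $K_1\Subset K_2\Subset\cdots$ of $X$ and $L_1\Subset L_2\Subset\cdots$ of $Y$, and choose a summable sequence $\varepsilon_n>0$. Starting from $\iota_0=\iota$, I would inductively construct holomorphic automorphisms $\Phi_n\in\Aut_{\mathrm{hol}}(X)$ and proper embeddings $\iota_n=\Phi_n\circ\iota_{n-1}$ with $\{z_1,\ldots,z_n\}\subseteq\iota_n(Y)$ and $\Phi_n$ being $\varepsilon_n$-close to the identity on $K_n\cup\iota_{n-1}(L_n)$ in a fixed complete metric. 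At step $n$, if $z_n\in\iota_{n-1}(Y)$ take $\Phi_n$ a small perturbation; otherwise select $y_n\in Y\setminus L_n$ with $\iota_{n-1}(y_n)\notin K_n$ and apply the finite-transitivity just proved together with the approximation theorem in a Runge neighborhood of an arc from $\iota_{n-1}(y_n)$ to $z_n$ that avoids $K_n\cup\iota_{n-1}(L_n\setminus\{y_n\})$, producing $\Phi_n$ with $\Phi_n(\iota_{n-1}(y_n))=z_n$ and the required closeness. The summable estimates force $\iota_n$ to converge locally uniformly to a holomorphic map $\varphi\colon Y\to X$ whose image contains each $z_n$.

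The hard part is to verify that $\varphi$ is actually a proper holomorphic embedding. Injectivity and nondegeneracy of $d\varphi$ on each compact $L_k$ follow from Cauchy-type estimates once $\varepsilon_n$ for $n>k$ is chosen small enough relative to the modulus of injectivity and the minimum Jacobian of $\iota_k$ on $L_k$; these quantities depend on the previously constructed data and are accommodated by shrinking $\varepsilon_n$ inductively. Properness requires that no sequence escaping to infinity in $Y$ is pulled back into a compact subset of $X$ by the cumulative effect of the tails $\Phi_{n+1}\circ\cdots\circ\Phi_N$, which is arranged by confining the region where $\Phi_n$ differs significantly from the identity to lie increasingly far from $\iota_{n-1}(L_n)$ in $X$. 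This inductive scheme is formally analogous to the construction of Fatou-Bieberbach domains and rests ultimately on the quantitative strength of the density property.
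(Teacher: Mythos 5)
The paper gives no proof of this theorem---it is quoted directly from the cited references \cite{KK,Va}---and your sketch follows the standard route taken there: the Anders\'en--Lempert--Forstneri\v{c}--Rosay--Varolin approximation theorem with jet interpolation applied to an isotopy along disjoint arcs for finite (hence infinite) transitivity, and an inductive push-out construction with summable errors, in the style of Fatou--Bieberbach domains, for interpolating a proper embedding through the discrete set. I see no essential gap; the technical points you flag (Runge neighborhoods of the arcs, arranging $z_n$ outside the current compact, and injectivity and properness of the limit map) are precisely the ones handled in those references, so your proposal matches the intended proof.
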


A similar result can be proved for holomorphic automorphisms and holomorphic vector fields preserving a volume form on a Stein manifold, see ~\cite[Corollary~2.1 and Remark~2.2]{KK} and~\cite[Theorem~A.12]{AFKKZ-1}.
 
The study of relationships between the flexibility of varieties and different versions of the concept of Gromov sprays has been conducted actively in recent years. Recall that a complex manifold $X$ is called \emph{elliptic} (respectively, \emph{subelliptic}) if it admits a dominating holomorphic spray (respectively, a dominating family of holomorphic sprays; see \cite[Definition~1.1]{KKT} for a precise definition).  

We say that a smooth algebraic variety $X$ is \emph{locally stably flexible} if $X$ is a union $\bigcup_i X_i$ of a finite number of Zariski-open quasi-affine subsets $X_i$, and for some positive integer~$m$ the varieties $X_i\times\CC^m$ are flexible for all $i$. In~\cite{KKT} it is shown that the blow-up of a locally stably flexible variety in a smooth algebraic subvariety is a subelliptic manifold. This result is obtained in~\cite{KKT} as a consequence of a more general result concerning so-called $k$-flexible varieties. 

Let us fix a positive integer $k$. A flexible quasi-affine variety $X$ is called \emph{$k$-flexible} if there exists such a morphism $\varphi\colon X\to Q$ to a normal affine variety $Q$ that for some Zariski-open subset $Q_0$ in $Q$ the preimage $\varphi^{-1}(Q_0)$ is isomorphic to the direct product $Q_0\times\CC^k$ over $Q_0$. Theorem~6.1 from ~\cite{KKT} states that if $X$ is a locally $k$-flexible variety for some $k\ge 2$ and $Z$ is a closed subvariety in $X$ of codimension at least $k$, then the result of the blow up of $X$ along~$Z$ is a subelliptic manifold.

In particular, varieties studied in~\cite{KKT} are Oka manifolds. Recall that a complex manifold $X$ is called an \emph{Oka manifold} if, for any positive integer $n$, any holomorphic map from a neighborhood of a compact convex subset $K$ in the space $\CC^n$ to $X$ is a uniform limit on $K$ of holomorphic maps from $\CC^n$ to $X$. For more information on Oka manifolds and the connection of this theory with the concept of flexibility one can see, for example, the work~\cite{Kut} and the recent survey~\cite{Fo}. The known facts on flexibility of toric varieties allow to apply the Oka theory and to obtain interesting results on extension, approximation and interpolation of maps and other geometric consequences both for toric varieties and for classes close to toric varieties, see, for example, \cite{LT}.

An exceptionally interesting object for research is the action of the group of holomorphic automorphisms of a Stein manifold on countable sets of points of such a manifold, see ~\cite{RR,AU,Wi-1,Wi-2} and references therein. In this situation, the orbits of the group of holomorphic automorphisms on the set of infinite discrete sequences form continuum families. Among such sets, there are those on which any permutation is obtained by restricting a holomorphic automorphism of the ambient space. Such sets are called \emph{tame}. In the case when the ambient space is the space $\CC^n$, the property of a set to be tame is equivalent to the fact that by a suitable holomorphic automorphism of $\CC^n$ this set can be sent to a standard position, for example, to the set of points with positive integer coordinates on the first coordinate axis. Every tame set in $\CC^n$ is \emph{disposable} in the sense that for such a set $E$ there exists a biholomorphic map $\CC^n\to \CC^n\setminus E$. On the other hand, there are examples of \emph{rigid} sets, that is, sets that are preserved only by the identical holomorphic automorphism.   

\section{Appendix B. Infinitely transitive actions of finitely generated groups}
\label{ap-b}

Let us start with elementary properties of multiply transitive actions of abstract groups on arbitrary sets.

\begin{lemma} \label{lem-com}
A commutative group $G$ cannot act $2$-transitively on a set $X$ with at least $3$ elements.
\end{lemma}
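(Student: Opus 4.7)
The plan is to derive a contradiction from the hypotheses. Suppose $G$ is commutative, $2$-transitive on $X$, and $|X|\ge 3$. The basic observation I will exploit is that $2$-transitivity forces the point stabilizer $G_x$ to act transitively on $X\setminus\{x\}$, while commutativity forces $G_x$ to fix every point of $X$; these two facts are incompatible as soon as $|X|\ge 3$.

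In detail, first I would fix a point $x\in X$ and invoke $2$-transitivity: for any two points $y,z\in X\setminus\{x\}$ the pair $(x,y)$ can be mapped to $(x,z)$, and this gives an element of $G_x=\{g\in G:g.x=x\}$ sending $y$ to $z$. Hence $G_x$ acts transitively on $X\setminus\{x\}$. Second, since $G$ is commutative, for any $h\in G_x$ and any $g\in G$ one has $h.(g.x)=(hg).x=(gh).x=g.(h.x)=g.x$. Because $2$-transitivity implies transitivity on $X$, every point of $X$ is of the form $g.x$ for some $g\in G$, so $h$ fixes every point of $X$. Thus $G_x$ acts trivially on $X$, while $|X\setminus\{x\}|\ge 2$ means no trivially acting group can be transitive there, a contradiction.

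As an alternative packaging of the same argument (which avoids naming the stabilizer), one can pick three distinct points $x,y,z$, use $2$-transitivity to produce $g,h\in G$ with $g.x=x$, $g.y=z$ and $h.x=y$, $h.y=x$, and then compute $(gh).x=g.y=z$ while $(hg).x=h.x=y$; commutativity forces $y=z$, contradicting distinctness. I would probably present the stabilizer version since it generalizes cleanly, but mention the direct computation as a remark.

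There is no real obstacle here; the only care needed is to note explicitly that $2$-transitivity on a set with at least two elements implies ordinary transitivity, which is what allows the conclusion ``$G_x$ acts trivially on all of $X$'' to be drawn from ``$G_x$ fixes the $G$-orbit of $x$ pointwise.''
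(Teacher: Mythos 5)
Your proof is correct and follows essentially the same route as the paper's: both argue that $2$-transitivity puts a nontrivial element in the stabilizer of a point, while commutativity forces that stabilizer to act trivially on the whole $G$-orbit (all of $X$), yielding a contradiction. The extra packaging via an explicit commutator computation is a fine remark but does not change the substance.
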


\begin{proof}
Let $G$ act $2$-transitively on $X$. Let us choose three distinct elements $x_1,x_2,x_3\in X$. An element of $G$ sending the pair $(x_1,x_2)$ to the pair $(x_1,x_3)$ is contained in the stabilizer of the point $x_1$. From commutativity of the group it follows that the stabilizer of the point $x_1$ acts identically on the orbit of the point $x_1$, and hence on the entire set $X$, a contradiction.
\end{proof}

In particular, no cyclic group can act $2$-transitively. Now, following~\cite{MD}, we construct an example of an infinitely transitive action of a $2$-generated group on the set of integers $\ZZ$.

We say that a permutation $g$ of an infinite set $X$ is an \emph{infinite cycle} if the cyclic group $\langle g\rangle$ acts transitively on $X$. Consider two transformations on the set ~$\ZZ$: let $h\colon\ZZ\to\ZZ$ send $i$ to $i+1$, and $f$ be an infinite cycle on the set of positive integers $\NN$ that leaves all other integers fixed.

\begin{proposition}[{\cite[Lemma~1]{MD}}] \label{pr-2-bes}
The group $G:=\langle f,h\rangle$ acts on the set $\ZZ$ infinitely transitively.
\end{proposition}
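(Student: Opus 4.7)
I would prove Proposition~\ref{pr-2-bes} in the equivalent formulation recalled in the Introduction: for every finite subset $S \subset \ZZ$ and every pair of distinct points $a, b \in \ZZ \setminus S$, it suffices to produce $g \in G$ with $g|_S = \mathrm{id}_S$ and $g(a)=b$. The workhorse elements are the conjugates
$$
f_n \;:=\; h^n f h^{-n}, \qquad n \in \ZZ,
$$
each of which is an infinite cycle on the ray $\{n+1, n+2, \ldots\}$ and fixes $\{\ldots, n-1, n\}$ pointwise. Hence $G$ contains an infinite cycle sitting on every shifted upward ray while rigidly fixing everything below it.

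First I would use the conjugation $h^k G_S h^{-k} = G_{S+k}$ to translate the whole configuration to the right, reducing to the case where $S \cup \{a, b\} \subset \NN$. Setting $N := \max S$, the element $f_N$ lies in the pointwise stabilizer $G_S$, since it fixes all of $\{1,\ldots,N\} \supset S$. In the ``easy" case where both $a, b$ belong to $\{N+1, N+2, \ldots\}$, a suitable power of the infinite cycle $f_N$ sends $a$ to $b$ and leaves $S$ fixed, and we are done.

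The remaining case, and the main obstacle, is when at least one of $a, b$ lies in $\{1,\ldots,N\} \setminus S$. Here I would construct an element $\pi \in G_S$ that carries the ``trapped" point out of $\{1,\ldots,N\}$ into the free region $\{N+1, N+2, \ldots\}$; once such a $\pi$ is in hand, composing with a power of $f_N$ and with $\pi^{-1}$ reduces back to the easy case. The construction of $\pi$ is a shuttling argument using $f_n$ for successively smaller values of $n \le N$: for each $s \in S$ taken in decreasing order, one uses $f_{s-1}$, which fixes $\{1,\ldots,s-1\}$ and acts as an infinite cycle on $\{s, s+1, \ldots\}$, to slip the trapped point around $s$ while temporarily displacing $s$ and then returning it by a compensating power. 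The combinatorial heart of the argument is the bookkeeping that guarantees, after finitely many alternations, that every element of $S$ is back in place and only the trapped point has been displaced into $\{N+1, N+2, \ldots\}$.

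The hard part is precisely this shuttle construction: one must track the orbit of the entire finite set $S$ under a composition of overlapping infinite cycles on nested rays, and arrange the exponents so that each $s \in S$ makes a round trip while $a$ makes a one-way trip. Everything else (the translation reduction and the closing step via $f_N$) is routine once the shuttle is built, and the resulting $g \in G_S$ with $g(a) = b$ then establishes, via the stabilizer criterion, that $G$ acts on $\ZZ$ infinitely transitively.
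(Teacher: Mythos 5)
Your overall frame is fine: the stabilizer criterion you invoke is exactly the equivalence recalled in the Introduction, the conjugates $h^n f h^{-n}$ are the right auxiliary elements, the translation step is harmless since $h\in G$, and the ``easy case'' (both $a,b$ above $\max S$) is correctly dispatched by a power of $f_N$. But the case you yourself call the combinatorial heart --- extracting a point $a\in\{1,\dots,N\}\setminus S$ from under $S$ --- is not actually proved, and the shuttle you sketch does not work as described. The element $f_{s-1}=h^{s-1}fh^{-(s-1)}$ fixes \emph{every} integer $\le s-1$, so when the trapped point $a$ lies below $s$ (the typical situation), $f_{s-1}$ does not move $a$ at all and cannot ``slip it around $s$.'' To move $a$ upward past the elements of $S$ lying above it you are forced to use some $f_j$ with $j<a$, and any such cycle simultaneously displaces all elements of $S$ greater than $j$ by amounts you do not control (recall $f$ is an arbitrary infinite cycle, not the successor map). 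Restoring those displaced elements of $S$ while preserving the new position of $a$ is precisely the original problem for a smaller configuration; a single decreasing pass with ``compensating powers'' does not close this loop, because each compensation re-scrambles points you have already placed. What is missing is the induction that makes this recursion terminate, and that is the entire content of the proposition.

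The paper organizes exactly this recursion cleanly: it sets $G_n=\langle f,h^{-1}fh,\dots,h^{-n}fh^n\rangle$, which fixes all integers $\le -n$, and proves by induction on $n$ that $G_n$ acts $(n+1)$-transitively on $X_n=\{i>-n\}$. The inductive step uses only the standard fact that an action is $(n+1)$-transitive once it is transitive and the stabilizer of a single point acts $n$-transitively on the complement: transitivity of $G_n$ on $X_n$ comes from the new generator moving $-n+1$ into $X_{n-1}$, and the stabilizer of $-n+1$ contains $G_{n-1}$, which is $n$-transitive on $X_{n-1}$ by the inductive hypothesis. Your ``shuttle'' is, in effect, an attempt to unwind this induction into an explicit word in the generators; if you want to salvage your approach, replace the one-pass shuttle by an induction on $|S|$ (or adopt the paper's induction on the ray index), since without it the argument has a genuine hole.
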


Let us start with an auxiliary statement. For each non-negative integer $n$, we consider the subgroup $G_n:=\langle f, h^{-1}fh,\ldots,h^{-n}fh^n\rangle$ in $G$. It is easy to see that this subgroup leaves all integers not exceeding $-n$ fixed, and, therefore, it acts on the set $X_n$ consisting of all integers greater than $-n$.

\begin{lemma} \label{pollem}
The group $G_n$ acts on the set $X_n$ $(n+1)$-transitively.
\end{lemma}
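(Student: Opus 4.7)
My plan is to prove the lemma by induction on $n\ge 0$, using two structural observations about the generators $h^{-k}fh^k$.

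First, I would unpack what these generators actually do. Since $h$ shifts $\ZZ$ by $+1$, conjugation sends the support $\NN$ of $f$ to $\{i\in\ZZ : i>-k\}$; hence $h^{-k}fh^k$ is an infinite cycle on $\{i : i>-k\}$ and fixes every integer $\le -k$. This gives two consequences I will use throughout:  (i) $G_n$ contains the element $h^{-n}fh^n$, which is an infinite cycle on the whole set $X_n=\{i : i>-n\}$; and (ii) every generator of $G_{n-1}=\langle f,\ldots,h^{-(n-1)}fh^{n-1}\rangle$ fixes the point $-n+1$, so $G_{n-1}\subseteq G_n$ fixes $-n+1$ pointwise, while still acting on $X_{n-1}=\{i : i>-n+1\}$.

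The base case $n=0$ is immediate: $G_0=\langle f\rangle$ is generated by an infinite cycle on $X_0=\NN$, hence acts transitively, i.e. $1$-transitively. For the inductive step, assume $G_{n-1}$ acts $n$-transitively on $X_{n-1}$, and take two ordered tuples $(a_0,a_1,\ldots,a_n)$ and $(b_0,b_1,\ldots,b_n)$ of pairwise distinct points of $X_n$. Using observation (i), choose integers $p,q$ such that $\sigma:=(h^{-n}fh^n)^p$ sends $a_0$ to $-n+1$, and $\tau:=(h^{-n}fh^n)^q$ sends $b_0$ to $-n+1$; both $\sigma$ and $\tau$ lie in $G_n$. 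After applying $\sigma$ to the first tuple and $\tau$ to the second, the entries $\sigma(a_1),\ldots,\sigma(a_n)$ and $\tau(b_1),\ldots,\tau(b_n)$ are all distinct from $-n+1$, hence lie in $X_{n-1}$. By the inductive hypothesis, there exists $\gamma\in G_{n-1}$ with $\gamma(\sigma(a_i))=\tau(b_i)$ for $i=1,\ldots,n$; by observation (ii) the element $\gamma$ also fixes $-n+1=\sigma(a_0)=\tau(b_0)$, so $\tau^{-1}\gamma\sigma\in G_n$ maps the first tuple to the second.

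The only really delicate point is making sure nothing in this argument secretly uses more than is available; in particular, that $G_{n-1}$ is genuinely contained in $G_n$ (which is immediate from the definition) and that its action on $X_{n-1}$ is the restriction of its action on $X_n$ (which follows from observation (ii), since $G_{n-1}$ fixes the unique point $-n+1\in X_n\setminus X_{n-1}$). I do not foresee a substantive obstacle beyond being careful with these bookkeeping items, as the whole argument reduces $(n+1)$-transitivity on $X_n$ to $n$-transitivity on $X_{n-1}$ by using one extra generator to absorb the extra point.
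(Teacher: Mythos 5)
Your proof is correct and follows essentially the same route as the paper: induction on $n$, reducing $(n+1)$-transitivity on $X_n$ to the $n$-transitivity of $G_{n-1}$, which sits inside the stabilizer of $-n+1$ and acts on $X_{n-1}=X_n\setminus\{-n+1\}$. The only cosmetic difference is that you obtain transitivity of $G_n$ on $X_n$ directly from the observation that $h^{-n}fh^n$ is an infinite cycle on all of $X_n$, whereas the paper merely notes that this generator moves $-n+1$ and then combines that with the inductive hypothesis.
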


\begin{proof}
We use the induction on $n$. For $n=0$, the subgroup $G_0$ coincides with $\langle f\rangle$. This group acts transitively on $X_0=\NN$ by construction.

Assume that the assertion is proved for all $i$ less than some $n$. Note that
$$
(h^{-n}fh^n)(-n+1)=f(1)-n\ne-n+1.
$$
Taking into account the inductive hypothesis, we conclude that the action of $G_n$ on $X_n$ is transitive. The stabilizer of the point $-n+1$ contains the subgroup $G_{n-1}$, which acts $n$-transitively on ${X_{n-1}=X_n\setminus\{-n+1\}}$. This proves the lemma.
\end{proof}

\begin{proof}[Proof of Proposition~\ref{pr-2-bes}]
Let us fix a positive integer $m$ and consider two sets of pairwise distinct integers of $m$ elements each. We choose $n\ge m-1$ so that any element from these sets
is greater than $-n$. By Lemma~\ref{pollem}, the first set can be sent to the second one by an element of the subgroup~$G_n$.
\end{proof} 

It is shown in \cite[Theorem~1]{MD} that by a suitable choice of the infinite cycle $h$ one can ensure that $G$ is isomorphic to the free group with two generators $f$ and $h$. It is well known that a free group with two generators contains a normal subgroup, which is a free group with $n$ generators for any positive integer $n$, as well as a normal subgroup, which is a free group with a countable number of generators, see, for example, \cite[Section~1]{MD}. Thus, Theorem~\ref{norgr} below allows us to construct an effective infinitely transitive action of a free group with any finite or countable number of generators on the set $\ZZ$.

Let us prove some more general statements.

\begin{lemma} \label{prtr}
Suppose that a group $G$ acts on a set $X$ with at least $3$ elements effectively and $2$-transitively. Let $N$ be a nontrivial normal subgroup in $G$. Then the action of $N$ on $X$ is transitive. In particular, the center $Z(G)$ of the group $G$ is trivial.
\end{lemma}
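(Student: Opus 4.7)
The plan is to split the lemma into the transitivity claim for $N$ and the triviality claim for $Z(G)$, handling the latter directly rather than deducing it from the former via Lemma~\ref{lem-com} (which only rules out $2$-transitive actions of abelian groups, whereas the first part of the lemma only delivers bare transitivity of $Z(G)$).

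For the transitivity of $N$, the starting observation is that normality makes the partition of $X$ into $N$-orbits invariant under $G$: for $g\in G$ and $x\in X$ one has $g.(N.x)=(gNg^{-1}).(g.x)=N.(g.x)$. Since $G$ acts transitively on $X$ (a consequence of $2$-transitivity, as $|X|\ge 2$), it permutes the set of $N$-orbits transitively, so all $N$-orbits have the same cardinality. Because the action of $G$ is effective and $N\ne\{e\}$, some element $n\in N$ moves some point, hence at least one $N$-orbit, and therefore every $N$-orbit, contains at least two elements.

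Assume for contradiction that there are at least two distinct $N$-orbits. Pick $y\in N.x$ with $y\ne x$ and $z\in X$ with $z\notin N.x$. By $2$-transitivity of $G$, there exists $g\in G$ with $g.x=x$ and $g.y=z$. But $g$ must send the orbit $N.x$ to the orbit of $g.x=x$, namely $N.x$ itself; thus $g.y\in N.x$, contradicting $z\notin N.x$. Hence $N$ admits a single orbit and acts transitively on $X$.

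For the triviality of the center, I would argue directly. Take $z\in Z(G)\setminus\{e\}$. Effectiveness yields $x\in X$ with $y:=z.x\ne x$. Since $|X|\ge 3$, choose $w\in X\setminus\{x,y\}$, and apply $2$-transitivity to produce $g\in G$ with $g.x=x$ and $g.y=w$. Centrality of $z$ now forces $w=g.y=g(z.x)=z(g.x)=z.x=y$, a contradiction. The main obstacle is purely combinatorial: one must exploit normality to realize $G$-translates of $N$-orbits as $N$-orbits, and then extract from $2$-transitivity an element $g\in G$ that simultaneously fixes a chosen point of an orbit and moves another of its points outside that orbit.
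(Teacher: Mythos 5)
Your proof is correct. For the transitivity of $N$ you use exactly the paper's mechanism: normality makes $G$ permute the $N$-orbits, effectiveness of the action together with $N\ne\{e\}$ produces an $N$-orbit with at least two points, and $2$-transitivity is then violated by comparing a pair inside one $N$-orbit with a pair straddling two orbits (your choice of pairs sharing the first coordinate $x$ is just a concrete instance of this; the observation that all $N$-orbits have equal cardinality is true but not needed). For the triviality of $Z(G)$ you diverge slightly: the paper first applies the transitivity statement to $N=Z(G)$ and then notes that a transitive central subgroup forces all point stabilizers to coincide, which is incompatible with $2$-transitivity on a set with at least $3$ elements; you instead argue directly that a nontrivial central element $z$ with $z.x=y\ne x$ cannot survive an element $g$ fixing $x$ and moving $y$, since $g.y=g.z.x=z.g.x=y$. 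Your version is marginally more self-contained (it does not route through the first claim), while the paper's version illustrates the general principle that a regular normal subgroup rigidifies the stabilizers; both are equally short and valid, and your reason for avoiding the detour through Lemma~\ref{lem-com} is sound, since bare transitivity of $Z(G)$ would not by itself contradict anything.
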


\begin{proof}
Since the group $G$ permutes $N$-orbits and at least one such orbit contains at least two points, choosing a pair of points from~$X$ lying in the same $N$-orbit and a pair of points lying in different $N$-orbits, we obtain a contradiction with $2$-transitivity.

If the center of $G$ is nontrivial, then it acts on $X$ transitive. This shows that the stabilizers in $G$ of points in $X$ are conjugated by elements of the center and so they coincide. This again contradicts to $2$-transitivity.
\end{proof} 

\begin{remark}
Consider the group $G$ of all affine transformations of a vector space $V$ of dimension $\ge 2$ over the field $\ZZ/2\ZZ$. Then $G$ acts on $V$ $3$-transitively, and its normal subgroup $N$ of parallel translations acts on $V$ transitively, but not $2$-transitively. This shows that when passing to a normal subgroup, the transitivity degree can decrease by more than $1$.
\end{remark}

The following result can be found, for example, in ~\cite[Corollary~7.2A]{DM}. For convenience of the reader, we give a proof of this result taken from~\cite[Lemma~5.6]{AZ}.

\begin{theorem} \label{norgr}
Let a group $G$ act on a set $X$ effectively and infinitely transitively, and $N$ be a nontrivial normal subgroup in $G$. Then the action of $N$ on $X$ is also infinitely transitive.
\end{theorem}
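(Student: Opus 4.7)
The plan is to argue by induction on $m \ge 1$ that the normal subgroup $N$ acts $m$-transitively on $X$; since $G$ is effective and infinitely transitive, the set $X$ is necessarily infinite. The base case $m=1$ is precisely Lemma~\ref{prtr}. For the induction step from $m$ to $m+1$, I would consider two tuples of distinct points $(x_1, \ldots, x_{m+1})$ and $(y_1, \ldots, y_{m+1})$ in $X$. The $m$-transitivity of $N$ provides $n_0 \in N$ with $n_0 x_i = y_i$ for $i \le m$, and then, writing $F = \{y_1, \ldots, y_m\}$, it suffices to find an element of the pointwise stabilizer $N_F := N \cap G_F$ that sends $n_0 x_{m+1}$ to $y_{m+1}$; that is, it suffices to show that $N_F$ acts transitively on $X \setminus F$.

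To establish this transitivity I would apply Lemma~\ref{prtr} to the action of $G_F$ on the infinite set $X \setminus F$: this action is effective (any element fixing $F$ pointwise and acting trivially on $X \setminus F$ fixes all of $X$, hence equals the identity by effectiveness of $G$ on $X$) and $2$-transitive (since $G$ is infinitely transitive on $X$), while $N_F$ is a normal subgroup of $G_F$. Lemma~\ref{prtr} then delivers the desired transitivity of $N_F$ on $X \setminus F$ provided $N_F$ is nontrivial. So the whole theorem reduces to showing that $N_F$ is nontrivial for every finite subset $F \subset X$, and I will prove this by a second induction on $|F|$. The case $|F| = 0$ is the hypothesis $N \neq 1$, and the inductive step follows by repeatedly invoking the following \emph{Key Lemma}, applied successively to $(G, X, N)$, then to $(G_{y_1}, X \setminus \{y_1\}, N_{y_1})$, and so on: if a group $H$ acts effectively and infinitely transitively on an infinite set $Y$ and $M$ is a nontrivial normal subgroup of $H$, then $M_y \neq 1$ for every $y \in Y$.

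The main obstacle is the Key Lemma, which I would prove by contradiction; assume $M_y = 1$. Lemma~\ref{prtr} applied to $H$ acting on $Y$ gives that $M$ is transitive on $Y$, hence $M$ acts regularly, and the map $n \mapsto n \cdot y$ is an $H_y$-equivariant bijection $M \to Y$ under which the action of $H_y$ on $Y$ corresponds to its conjugation action on $M$ by group automorphisms; in particular, $H_y$ acts on $M \setminus \{1\}$ infinitely transitively. Choose $n \in M$ with $n \neq 1$; if $n^2 \neq 1$ then $n$ and $n^2$ are distinct elements of $M \setminus \{1\}$, and by $2$-transitivity of $H_y$ one can find $\varphi \in H_y$ fixing $n$ and sending $n^2$ to any prescribed $m \in M \setminus \{1, n\}$, but $\varphi(n^2) = \varphi(n)^2 = n^2$ forces $m = n^2$, contradicting the infinitude of $M$. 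Hence $M$ has exponent $2$ and is abelian. For any two distinct non-identity $n_1, n_2 \in M$ the product $n_1 n_2$ is a fourth non-identity element distinct from $n_1$ and $n_2$, and $3$-transitivity of $H_y$ on $M \setminus \{1\}$ would yield $\varphi$ fixing $n_1$ and $n_2$ while moving $n_1 n_2$ to an arbitrary element of $M \setminus \{1, n_1, n_2\}$, whereas $\varphi(n_1 n_2) = \varphi(n_1)\varphi(n_2) = n_1 n_2$ is forced; this pins $M \setminus \{1\}$ to the three elements $n_1, n_2, n_1 n_2$, so $|Y| = |M| = 4$, contradicting the infinitude of $Y$ and completing the argument.
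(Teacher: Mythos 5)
Your proof is correct and follows essentially the same route as the paper: both arguments come down to a point stabilizer acting by conjugation (i.e.\ by group automorphisms) on a regular normal subgroup $M$, where $2$-transitivity forces $M$ to have exponent $2$ (you use the relation $n\mapsto n^2$, the paper the pairing $h\leftrightarrow h^{-1}$) and $3$-transitivity is then contradicted via the product relation in the resulting vector space over $\ZZ/2\ZZ$. The only difference is organizational --- you run an explicit double induction with an isolated Key Lemma asserting $M_y\neq\{e\}$, whereas the paper takes a minimal counterexample in which $N_\alpha=\{e\}$ while $N_\beta$ acts transitively and freely --- but the mathematical content is identical.
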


\begin{proof}
Consider a set $\alpha=\{x_1,\ldots,x_m\}$ of pairwise distinct elements in $X$. Let $G_{\alpha}$ be the intersection in $G$ of the stabilizers of points from $\alpha$ and set $N_{\alpha}=N\cap G_{\alpha}$. Then $N_{\alpha}$ is a normal subgroup in $G_{\alpha}$. It is sufficient to prove that for any $m$ and any $\alpha$, the action of $N_{\alpha}$ on $X\setminus\{x_1,\ldots,x_m\}$ is transitive. For $m=0$ we obtain $G_{\alpha}=G$, $N_{\alpha}=N$, and the transitivity follows from Lemma~\ref{prtr}.

Suppose that the required property does not hold for some $m$ and some~$\alpha$, and the value $m\ge 1$ is minimal possible. We know that the group $G_{\alpha}$ acts on ${X\setminus\{x_1,\ldots,x_m\}}$ infinitely transitively. It follows from Lemma~\ref{prtr} that if the action of $N_{\alpha}$ on ${X\setminus\{x_1,\ldots,x_m\}}$ is not transitive, then $N_{\alpha}=\{e\}$.

Let $\beta=\{x_1,\ldots,x_{m-1}\}$. From the minimality of $m$ it follows that the group $N_{\beta}$ acts transitively on $X\setminus\{x_1,\ldots,x_{m-1}\}$. The condition $N_{\alpha}=\{e\}$ implies that this action is free, and we can identify the sets $X\setminus\{x_1,\ldots,x_m\}$ and $N_{\beta}\setminus\{e\}$: an element $y$ is identified with an element $h\in N_{\beta}\setminus\{e\}$ such that $y=hx_m$. Under this identification, the action of $G_{\alpha}$ on $N_{\beta}\setminus\{e\}$ is the action by conjugation:
$$
ghx_m=ghg^{-1}gx_m=ghg^{-1}x_m.
$$ 
But a conjugation sends a pair of the form $(h,h^{-1})$ to a pair of the same form. This contradicts to $2$-transitivity of the action of $G_{\alpha}$ on $N_{\beta}\setminus\{e\}$, except in the case when all elements of the group $N_{\beta}$ have order not higher than $2$. In the latter case, the group $N_{\beta}$ can be identified with the additive group $(V,+)$ of some vector space $V$ over the field~$\ZZ/2\ZZ$. Then the group $G_{\alpha}$ acts on $V$ by linear transformations, such an action preserves linear dependencies and, therefore, is not $3$-transitive. The resulting contradiction completes the proof of the theorem.
\end{proof} 

In conclusion, we list several results on infinitely transitive actions of abstract groups.

It is clear that the group of all permutations with a finite support of an infinite set $X$ acts on $X$ infinitely transitively. This group consists of elements of finite order and it is not finitely generated.

It is easy to see that a group $G$ admits an effective infinitely transitive action on a countable set if and only if $G$ can be embedded as a dense subgroup into the infinite symmetric group $S(\NN)$ equipped with the topology of pointwise convergence. In~\cite{Di} it is shown that most (in the topological sense) infinitely transitive finitely generated subgroups in the group $S(\NN)$ are free.

In~\cite{Kit} it is proved that the fundamental group of a closed oriented surface of genus $g>1$ admits an effective infinitely transitive action. A characterization of fundamental groups of three-dimensional compact manifolds admitting such an action is obtained in~\cite{HO}.

The existence of effective infinitely transitive actions of free products $G_1*G_2$ was studied in \cite{GMcC,Gu,Hick,MS}. It turned out that if one of the groups $G_i$ has order at least $3$, then the free product admits an effective infinitely transitive action, whereas the group $(\ZZ/2\ZZ)*(\ZZ/2\ZZ)$ does not allow such an action. The existence of effective infinitely transitive actions for groups acting on trees was studied in~\cite{FLMS,FMS}.

It is shown in \cite{HO} that any group given by at least three generators and one defining relation admits an effective infinitely transitive action.

In~\cite[Question~2]{AZ}, it is asked whether a finitely generated group $G$ acting infinitely transitively on some infinite set $X$ can have intermediate growth. Currently, this issue remains open.

An effective infinitely transitive action of the group of outer automorphisms $\text{Out}{F_n}$ of the free group $F_n$ with $n\ge 4$ is constructed in~\cite{GG}. For $n=3$, the same result is obtained in~~\cite{HO}. It is also shown there that for $n=2$ this group does not allow the required action.

In~\cite[Section~IV.4]{Ch}, an effective infinitely transitive action of a non-elementary hyperbolic group without nontrivial normal finite subgroups is constructed. In~\cite{HO} it is shown that every countable acylindrically hyperbolic group admits an infinitely transitive action with a finite kernel. This result has many important consequences. Also the work~\cite{HO} contains a detailed survey of recent results on multiply transitive actions of countable groups.

\end{document}